\newtheorem{defi}{Definition}[section]
\newtheorem{theo}{Theorem}[section]
\newtheorem{lemm}{Lemma}[section]
\newcommand{\R}{\mathbb{R}}
\newcommand{\N}{\mathbb{N}}
\newcommand{\Div}{\mathrm{div}}
\newcommand{\wh}{\hat{w}}
\newcommand{\xh}{\hat{z}}
\newcommand{\yo}{s}
\newcommand{\CLea}{\mathcal{L}_\eps}
\newcommand{\Hneg}{H_{\mathrm{loc}}^{-1}}
\newcommand{\byo}{{\underline{s}}}
\newcommand{\byh}{{\underline{\hat{y}}}}
\newcommand{\bwh}{{\underline{\hat{w}}}}
\newcommand{\tf}{\tilde{f}}
\newcommand{\bw}{{\underline{w}}}
\newcommand{\Ly}{{L^\infty}}
\newcommand{\un}{\mathbf{1}}
\newcommand{\bA}{{A_\alpha}}
\newcommand{\bH}{{q_\alpha}}
\newcommand{\chif}{$\chi$-function}
\newcommand{\Open}{\mathcal{O}}
\newcommand{\ah}{\hat{a}}
\newcommand{\ftau}{{f^{\tau}}}
\newcommand{\tao}{\tilde{a}^0}
\newcommand{\tm}{\tilde{m}}
\newcommand{\tme}{\tilde{m}_\delta}
\newcommand{\tmen}{\tilde{m}_{\delta_n}}
\newcommand{\tfe}{\tilde{f}_\delta}
\newcommand{\tfen}{\tilde{f}_{\delta_n}}
\newcommand{\tfi}{\tilde{f}_\infty}
\newcommand{\esslim}{\operatorname*{ess\,lim}}
\newcommand{\eps}{\varepsilon}
\newcommand{\norm}[1]{\left\Vert#1\right\Vert}
\newcommand{\abs}[1]{\left|#1\right|}
\newcommand{\Set}[1]{\left\{#1\right\}}
\newcommand{\Lenloc}{L^1_{\mathrm{loc}}}
\newcommand{\sgn}{\operatorname*{sgn}}
\newcommand{\pt}{\partial_t}
\newcommand{\px}{\partial_x }
\newcommand{\pxx}{\partial_{xx}^2}
\newcommand{\pxxx}{\partial_{xxx}^3}
\newcommand{\pxxxx}{\partial_{xxxx}^4}
\newcommand{\ptx}{\partial_{tx}^2}
\newcommand{\ptxx}{\partial_{txx}^3}
\newcommand{\ue}{u_\eps}
\newcommand{\Pe}{P_\eps}
\newcommand{\Ve}{V_\eps}
\newcommand{\Pek}{P_{\eps_k}}
\newcommand{\ome}{\omega_\eps}
\newcommand{\ve}{v_\eps}
\newcommand{\Ome}{\Omega_\eps}
\newcommand{\te}{\theta_\eps}
\newcommand{\uek}{u_{\eps_k}}
\title[IBVP for conservation laws with source terms and the D-P equation]
{Initial-boundary value problems for \\ conservation laws with source terms
\\ and the Degasperis-Procesi equation}
\author[G. M. Coclite]{G. M. Coclite}
\address[Giuseppe Maria Coclite]{\newline Dipartimento di Matematica \newline
Universit\`a degli Studi di Bari \newline
Via E. Orabona 4\newline
70125 Bari, Italy}
\email[]{coclitegm@dm.uniba.it}
\urladdr{http://www.dm.uniba.it/Members/coclitegm/}
\author[K. H. Karlsen]{K. H. Karlsen}
\address[Kenneth Hvistendahl Karlsen]
{\newline
Centre of Mathematics for Applications \newline
University of Oslo \newline
P.O. Box 1053, Blindern \newline
N--0316 Oslo, Norway}
\email[]{kennethk@math.uio.no}
\urladdr{http://folk.uio.no/kennethk/}
\author[Y.-S. Kwon]{Y.-S. Kwon}
\address[Young-Sam Kwon]
{\newline
CSCAMM, 4149 CSIC Building \#406\newline
Paint Branch Drive\newline
University of Maryland\newline
College Park, MD 20742-2389}
\email[]{ykwon@cscamm.umd.edu}
\subjclass[2000]{Primary: 35L65, 35G25; Secondary: 35L05, 35A05}
\keywords{Conservation laws with source terms, trace theorem, kinetic formulation, boundary
value problems, averaging lemma, Degasperis-Procesi equation}
\thanks{The research of K. H. Karlsen is supported by an Outstanding
Young Investigators Award (OYIA) from the Research Council of
Norway. This work was initiated while K. H. Karlsen visited CSCAMM
at the University of Maryland. He is grateful for CSCAMM's financial
support and excellent working environment. This article was written as
part of the the international research program
on Nonlinear Partial Differential Equations at the Centre for
Advanced Study at the Norwegian Academy of Science
and Letters in Oslo during the academic year 2008--09. }
\date{\today}
\begin{document}

\maketitle

\begin{abstract}
We consider conservation laws with source terms
in a bounded domain with Dirichlet boundary conditions.
We first prove the existence of a strong trace at the boundary
in order to provide a simple  formulation of the entropy
boundary condition. Equipped with this formulation, we go on to establish the
well-posedness of entropy solutions to the initial-boundary value problem.
The proof utilizes the kinetic formulation and the
compensated compactness method. Finally, we make use of these
results to demonstrate the well-posedness in a class of
discontinuous solutions to the initial-boundary value problem for the
Degasperis-Procesi shallow water equation, which
is a third order nonlinear dispersive equation
that can be rewritten in the form of a nonlinear conservation law with
a nonlocal source term.
\end{abstract}

\tableofcontents

\section{Introduction}
In this article we consider scalar conservation laws with source
terms on a bounded open subset $\Omega\subset\R^d$ with
$C^2$ boundary:
\begin{equation}\label{eq_moment}
    \partial_t u+\Div_x A(u)=S(t,x,u),
    \quad  (t,x)\in Q:=(0,T)\times\Omega,
\end{equation}
where $T>0$ is a fixed final time and
the flux function $A\in C^2$ satisfies the
genuine nonlinearity condition
\begin{equation}\label{hypothesis}
    \mathcal{L}(\{\xi \ | \ \tau+\zeta\cdot A'(\xi)=0\})=0,\quad
    \mathrm{for \ every \ \ }(\tau,\xi)\neq(0,0),
\end{equation}
where $\mathcal{L}$ is the Lebesgue measure.

The source term satisfies the following conditions:
\begin{equation}\label{eq:source-ass}
    S\in L^\infty(Q\times\R), \quad
    S(t,x,\cdot)\in  C^1(\R), \quad
    \abs{S(t,x,u)-S(t,x,v)} \le C|u-v|,
\end{equation}
where the last two conditions hold
for a.e.~$(t,x)\in Q$ and $C>0$ is a constant.

As usual, we only deal  with entropy solutions, namely
those that fulfill in the sense of distributions on $Q$ the inequality
\begin{equation}\label{ineq_entropie}
    \partial_t\eta(u)+\Div_x q(u)-\eta'(u)S(t,x,u)\le 0
 \end{equation}
for every convex $C^2$ function $\eta$ and related entropy flux defined by
\begin{equation*}
    q'=A'\eta'.
\end{equation*}
We are interested in the well-posedness in $L^\infty$
of the initial-boundary value problem for \eqref{eq_moment},
in which case we impose the initial data
\begin{equation}\label{eq:initial}
    u(0,\cdot)=u_0\in L^\infty(\Omega)
\end{equation}
and the Dirichlet boundary data
\begin{equation}\label{eq:boundary}
    u|_{\Gamma}=u_b\in L^\infty(\Gamma),
\end{equation}
where $\Gamma:=(0,T)\times\partial\Omega$. Of course, this Dirichlet condition has to
be interpreted in an appropriate sense (see below) and this in turn requires an entropy
solution to possess boundary traces (which herein will be understood in a strong sense).

A $BV$ well-posedness theory for conservation laws with Dirichlet boundary conditions was
first established by Bardos, le Roux, and N{\'e}d{\'e}lec \cite{Bardos:1979us}, and later
extended by Otto \cite{Otto:1996am} to the $L^\infty$ setting, for which boundary traces do
not exist in general, a fact that complicates significantly the notion of solution and the proofs.
For genuinely nonlinear fluxes and domains whose boundaries satisfy a
mild regularity assumption, Vasseur \cite{Vasseur} showed
that $L^\infty$ entropy solutions always have traces at the boundaries.
Similar results hold without imposing a genuine nonlinearity
condition, cf.~Panov \cite{Panov1,Panov2} and Kwon and Vasseur \cite{K-V}.
Consequently, for genuinely nonlinear fluxes, the $L^\infty$ case
can be treated as in \cite{Bardos:1979us}, i.e.,
the more complicated notion of entropy solution
used by Otto can be avoided, see Kwon \cite{Kwon}.

To define traces on the boundary $\Gamma$
we use the concept of a ``regular deformable boundary" (see for
instance Chen and Frid in \cite{C-F}).
For any domain $\Omega$ with ${\mathcal{C}}^2$ boundary, there
exists at least one $\partial\Omega$-regular deformation. Given any
open subset $\hat{K}$ of $\partial\Omega$, we refer to a mapping
$\hat{\psi}:[0,1]\times \hat{K}\to\bar{\Omega}$ as a $\hat{K}$-regular
deformation provided it is a ${\mathcal C}^1$ diffeomorphism
and $\hat{\psi}(0,\cdot)\equiv I_{\hat{K}}$ with $I_{\hat{K}}$ denoting the
identity map over $\hat{K}$. Let us now define the
set $K:=(0,T)\times \hat{K}$ and the function
$\psi(t,x):=(t,\hat{\psi}(x))$. Then, obviously, $\psi(t,x)$ is $K$-regular
deformation with respect to $\Gamma$.
Let us denote by $\hat{n}_s$ the unit outward normal
field of the deformed boundary $\hat{\psi}(\{s\}\times\partial \Omega)$. We
also write $n_s=(0,\hat{n}_s)$ and $n=(0,\hat{n})$. Notice that $\hat{n}_s$ converges
strongly to $\hat{n}$ when $s$ goes to 0.

Our first main result is the following theorem.

\begin{theo}\label{the_theorem1}
Let $\Omega\subset\R^d$ be a regular open set with $\mathcal{C}^2$ boundary. Assume
that \eqref{eq:source-ass} holds and that the flux
function $A\in C^{2}(\R)$ verifies \eqref{hypothesis}.
Consider any function $u\in L^\infty((0,T)\times\Omega)$
obeying \eqref{eq_moment} and \eqref{ineq_entropie}
in $(0,T)\times\Omega$. Then

$\bullet$ there exists $u^\tau\in L^\infty((0,T)\times\partial \Omega)$
such that for  every $\Gamma$-regular deformation $\psi$
and every compact set $K\subset\!\subset \Gamma$ there holds
\begin{equation*}
    \esslim_{s\to0} \int_{K}|
    u(\psi(s,\hat{z}))-u^\tau(\hat{z})|d\sigma(\hat{z})=0,
\end{equation*}
where $d\sigma$ denotes the volume
element of $(0,T)\times\partial\Omega$;

$\bullet$ there exists $u^\tau\in L^\infty(\Omega)$ such
that for every compact set $K\subset\!\subset\Omega$ there holds
\begin{equation*}
    \esslim_{t\to0} \int_{K}
    |u(t,x)-u^\tau(x)|\,dx=0.
\end{equation*}
In particular, the trace $u^\tau$ is unique and, for any
continuous function $F$, $F(u)$ also possesses a trace and
\begin{equation*}
    [F(u)]^\tau=F(u^\tau).
\end{equation*}
\end{theo}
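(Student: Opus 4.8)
The plan is to adapt the kinetic/averaging-lemma argument of Vasseur \cite{Vasseur} to the present setting with a source term, treating the boundary trace and the initial trace on an equal footing: after straightening the boundary with the regular deformation $\psi$, the parameter $s$ (respectively, the time $t$) measures the distance to the manifold on which the trace is sought, and in both cases the goal is to show that the slices $u(\psi(s,\cdot))$ (respectively, $u(t,\cdot)$) form a convergent family in $\Lenloc$ as that distance tends to $0$.

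First I would recast the entropy inequalities \eqref{ineq_entropie} in kinetic form. With the \chif\ $f(t,x,\xi)=\un_{0<\xi<u(t,x)}-\un_{u(t,x)<\xi<0}$, the whole family \eqref{ineq_entropie} is equivalent to the single transport equation
\begin{equation*}
    \pt f+A'(\xi)\cdot\nabla_x f=\partial_\xi m+g,
\end{equation*}
where $m\ge0$ is the nonnegative entropy-defect measure and $g=S(t,x,u)\,\delta(\xi-u)$ is the bounded contribution of the source, whose total mass is controlled by \eqref{eq:source-ass}. Two structural facts drive the argument: $f$ is a \chif, so that $f\,\sgn(\xi)=|f|\ge0$ and $u$ is reconstructed from the velocity averages of $f$; and the transport symbol $\tau+A'(\xi)\cdot\zeta$ is nondegenerate thanks to the genuine nonlinearity hypothesis \eqref{hypothesis}.

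Next I would localize near a point of $\Gamma$, use $\psi$ to flatten the boundary, and carry out an anisotropic blow-up that stretches only the normal variable $z\mapsto\eps z$, concentrating the slices onto the boundary. Rescaling the kinetic equation and multiplying by $\eps$ makes the time and tangential derivatives, as well as the source term $g$, of order $\eps$, while the normal transport term remains of order one; passing to a weak-$\star$ limit of the rescaled \chif{}s therefore yields a source-free, purely normal kinetic equation $A'(\xi)\cdot\nh\,\partial_z\bar f=\partial_\xi\bar m$ with $\bar m\ge0$. Here the averaging lemma applies precisely because of \eqref{hypothesis}: it furnishes strong $\Lenloc$ compactness of the velocity averages $\int\bar f\,\varphi(\xi)\,d\xi$, and, $f$ being a \chif, compactness of enough averages upgrades to strong compactness of $u$ itself. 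Combined with the sign of $\bar m$, this forces $\bar f$ to be independent of the normal variable $z$, which is exactly the statement that converts the weak blow-up limit into strong $\Lenloc$ convergence of the slices and produces the trace $u^\tau$.

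I expect the principal obstacle to lie in controlling the source term $g$ uniformly through the blow-up: one must verify that the bounded, Lipschitz source of \eqref{eq:source-ass} neither destroys the compactness delivered by the averaging lemma (which tolerates bounded or measure-valued right-hand sides) nor survives in the limiting equation, so that the source-free mechanism of \cite{Vasseur} can be invoked essentially verbatim on the blow-up. Once strong $\Lenloc$ convergence of the slices is established, uniqueness of $u^\tau$ is immediate, and for any continuous $F$ the identity $[F(u)]^\tau=F(u^\tau)$ follows by extracting an a.e.-convergent subsequence and using the boundedness of $u$ together with dominated convergence.
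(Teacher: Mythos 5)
Your overall strategy (kinetic formulation, blow-up near the boundary, velocity averaging, following Vasseur) is the same as the paper's, and your observation that the source term of \eqref{eq:source-ass} enters as a bounded measure that is wiped out by the rescaling is in the right spirit. The step that fails is the blow-up itself. You stretch only the normal variable, $z\mapsto\eps z$, and after multiplying by $\eps$ you arrive at a limit equation of the form $A'(\xi)\cdot\nh\,\partial_z\bar f=\partial_\xi\bar m$. This equation contains no derivatives in $t$ or in the tangential variables, so its symbol is completely degenerate in the dual variables of $(t,\hat y)$: the nondegeneracy hypothesis \eqref{hypothesis} buys you nothing there, and the averaging lemma applied to such an equation yields compactness of velocity averages only in the $z$-direction --- none at all in $(t,\hat y)$ (the rescaled family can oscillate arbitrarily in $t$ and $\hat y$ while solving the purely normal equation exactly). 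Consequently you cannot ``upgrade to strong compactness of $u$ itself,'' the weak-$\star$ limit $\bar f$ cannot be shown to be a \chif, and the mechanism that converts weak convergence of the slices into the strong $L^1$ convergence asserted in Theorem \ref{the_theorem1} is missing: the whole content of the theorem is precisely that the weak trace is a \chif. Your final rigidity claim is also off target twice: if $\bar m$ survives in the limit, its sign does not force $z$-independence (a stationary entropy shock in the normal direction, with $A(u_L)\cdot\nh=A(u_R)\cdot\nh$, solves that equation with $\bar m\ge 0$, $\bar m\neq 0$); and if $\bar m$ vanishes, $z$-independence of a weak limit merely reproduces the existence of a weak trace, not its \chif\ structure.

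The paper avoids this by blowing up isotropically at almost every boundary point $\wh$: one sets $\tfe(\byo,\byh,\xi)=\tf(\delta\byo,\wh+\delta\byh,\xi)$, so that the limit equation \eqref{eq_cinetique_limite} retains the full $(d+1)$-dimensional transport operator, whose symbol is nondegenerate by \eqref{hypothesis}. The Perthame--Souganidis averaging lemma (Theorem \ref{lemmas}) then gives strong compactness in \emph{all} variables, Lemma \ref{elementaire} shows the limit is a \chif, and Vasseur's rigidity lemma (Lemma \ref{trace_strong1}) identifies it with the weak trace. Two further ingredients, absent from your proposal, are needed to run this argument: the existence and deformation-independence of the weak trace $f^\tau$ (Lemma \ref{weak_trace}, proved via the Chen--Frid theory of divergence-measure fields), which both defines the candidate trace and anchors the identification of the blow-up limit; and the measure-vanishing statement (Lemma \ref{mesure_vanish}), which selects scales $\delta_k$ and a full-measure set of boundary points at which the rescaled defect and source measures disappear --- under your anisotropic scaling the analogous requirement would be finiteness of the defect measure on slabs $\{0<z<\eta\}$ uniformly in the unlocalized tangential and time variables, which is not available a priori.
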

The proof of this theorem is found in Section \ref{sec:strong_trace}.
More precisely, in that section we prove the first part
of Theorem \ref{the_theorem1}. The second part can be
proved using the same method, so we omit  the details.

Having settled the existence of strong boundary traces, we can
now turn to the choice of entropy boundary condition.
Instead of working with the original condition
due to Bardos, le Roux, and N{\'e}d{\'e}lec \cite{Bardos:1979us}, we
shall instead employ the following equivalent boundary
condition introduced by Dubois and LeFloch \cite{D-L}, which
is well-defined in $L^\infty$ thanks to Theorem \ref{the_theorem1}:
\begin{equation}\label{DL_boundary}
    \Bigl[q(u^\tau)
    -q(u_b)-\eta'(u_b)
    (A(u^\tau)-A(u_b))\Bigr]\cdot\hat{n}\geq 0,
\end{equation}
where $B^\tau$ means the trace
of $B$ on $\Gamma=(0,T)\times\partial\Omega$
and  $\hat{n}$ is the unit outward normal
to $\partial\Omega$ with $n=(0,\hat{n})$.

Our second main result is the well-posedness of entropy solutions to
the initial-boundary value problem \eqref{eq_moment}, \eqref{eq:initial},
and \eqref{eq:boundary}, with the boundary condition
\eqref{eq:boundary} being interpreted in the sense of \eqref{DL_boundary}.

\begin{theo}\label{the_theorem2}
Let $\Omega\subset\R^d$ be a regular open set with $\mathcal{C}^2$
boundary. Assume that the source term $S(t,x,u)$ obeys \eqref{eq:source-ass}
and that the flux function $A\in C^2(\R)$ verifies \eqref{hypothesis}. Let
$u_0\in L^\infty(\Omega)$. Then there exists an unique entropy
solution $u\in L^\infty(Q)$ verifying \eqref{eq_moment},
\eqref{ineq_entropie}, \eqref{eq:initial}, and \eqref{DL_boundary}.
\end{theo}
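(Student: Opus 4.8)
The plan is to establish Theorem \ref{the_theorem2} in two stages, existence and uniqueness, using different machinery for each. For existence I would construct approximate solutions by the vanishing viscosity method: for $\eps>0$ solve the parabolic regularization $\pt\ue+\Div_x A(\ue)=S(t,x,\ue)+\eps\Delta\ue$ on $Q$ with smoothed initial datum $u_0$ and Dirichlet boundary datum $u_b$. Classical parabolic theory furnishes smooth solutions $\ue$, and the assumptions \eqref{eq:source-ass}, in particular the global Lipschitz bound of $S$ in $u$, yield through a comparison principle combined with a Gronwall estimate a uniform bound $\norm{\ue}_{L^\infty(Q)}\le M$ independent of $\eps$. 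The obstruction at this point is that this $L^\infty$ bound alone gives only weak-$*$ compactness, insufficient to pass to the limit in the nonlinear flux $A(\ue)$ and the source $S(t,x,\ue)$.

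To upgrade to strong compactness I would pass to the kinetic formulation. Writing the kinetic function $\chi(\ue,\xi)$, the viscous equation becomes a linear transport equation in the variables $(t,x,\xi)$ whose right-hand side consists of terms, notably the entropy dissipation measure, that are bounded uniformly in $\eps$ in the sense of measures. The genuine nonlinearity hypothesis \eqref{hypothesis} is precisely the non-degeneracy condition required by the velocity averaging lemma, which is the compensated-compactness ingredient here and yields strong $\Lenloc$ precompactness of the velocity averages, hence of $\ue$ itself. Extracting a subsequence $\ue\to u$ a.e.\ and in $\Lenloc(Q)$, I pass to the limit to obtain $u\in L^\infty(Q)$ satisfying \eqref{eq_moment} and the entropy inequality \eqref{ineq_entropie}; the strong convergence is exactly what makes $A(\ue)\to A(u)$ and $S(\cdot,\ue)\to S(\cdot,u)$ in the limit.

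It remains to recover the initial and boundary data, and here Theorem \ref{the_theorem1} is decisive, since it guarantees that $u$ has strong traces $u^\tau$ on $\Gamma$ and at $t=0$, so that \eqref{DL_boundary} is meaningful. The initial condition $u^\tau(0,\cdot)=u_0$ I would read off from the strong convergence together with the time-trace part of Theorem \ref{the_theorem1}. The boundary condition \eqref{DL_boundary} I would derive by a boundary-layer analysis of the viscous problem: testing the viscous entropy inequality against functions concentrating near $\Gamma$ and using the strong boundary trace $u^\tau$ (which need not coincide with $u_b$, the discrepancy being resolved by the boundary layer) to pass to the limit, one recovers the Dubois--LeFloch inequality \eqref{DL_boundary}. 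Controlling the normal viscous flux at $\partial\Omega$ uniformly in $\eps$ is the first genuine difficulty of the argument.

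For uniqueness and the attendant $L^1$ comparison, I would run the Kru\v{z}kov doubling-of-variables argument on two entropy solutions $u,v$. Doubling in both $(t,x)$ variables with the Kru\v{z}kov entropies $\eta(\cdot)=|\cdot-k|$ and their fluxes, the interior terms produce the familiar contraction, the Lipschitz bound in \eqref{eq:source-ass} is absorbed by a Gronwall inequality in time, and the initial data are matched through the strong time trace. The crux is the boundary: after the doubling limit one is left with integrals over $\Gamma$ involving the traces $u^\tau,v^\tau$ and the common datum $u_b$, and one must show these have the correct sign. This is exactly where the Dubois--LeFloch formulation \eqref{DL_boundary}, now available pointwise a.e.\ on $\Gamma$ thanks to Theorem \ref{the_theorem1}, enters, allowing one to conclude $\norm{u(t)-v(t)}_{L^1(\Omega)}\le e^{Ct}\norm{u_0-v_0}_{L^1(\Omega)}$ and hence uniqueness. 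I expect the sign analysis of these boundary integrals to be the main obstacle of the whole proof, as it requires combining the strong trace, the orientation of the genuinely nonlinear flux through $\hat n$, and the boundary inequality \eqref{DL_boundary}.
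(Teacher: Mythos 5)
Your proposal is correct in outline, and on the existence side it follows essentially the paper's route: vanishing viscosity, a maximum-principle $L^\infty$ bound, strong compactness from the genuine nonlinearity \eqref{hypothesis}, and a boundary-layer argument to recover \eqref{DL_boundary}. The one deviation there is the compactness tool: you invoke a kinetic averaging lemma on the viscous equation, whereas the paper derives the energy bound $\int_Q\eps\abs{\nabla_x u^\eps}^2\,dtdx\le C$, concludes that $\partial_t\eta(u^\eps)+\Div_x q(u^\eps)$ is compact in $\Hneg(Q)$, and applies Tartar's compensated compactness \cite{Tartar:1983ul}; both tools rest on \eqref{hypothesis} and both work, the paper's being the more elementary at this step since it avoids recasting the viscous term in the form required by the averaging lemma. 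Your boundary-layer step is exactly the paper's Lemma \ref{BL_lemma}, including the difficulty you flag about the normal viscous flux, which the paper resolves by rerunning the layer argument with $\eta=\mathrm{Id}$ so as to identify $\lim_{\eps\to0}\eps\int\nabla_x u^\eps\cdot\hat{n}\,\theta$ with $\int(A(u^\tau)-A(u_b))\cdot\hat{n}\,\theta$.

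Where you genuinely diverge is uniqueness. The paper does not double variables: following Perthame \cite{Perthame} and Kwon \cite{Kwon}, it works with the kinetic functions $f,g$ of the two solutions, regularizes in $(t,x)$, derives the $L^2$-type inequality \eqref{regular_equation1}, and---this is its key lemma---rewrites \eqref{DL_boundary} in kinetic form (Lemma \ref{kinetic_form}) with boundary measures $\mu_f,\mu_g$ whose sign structure makes the boundary term \eqref{last_equality} non-negative. You instead propose Kru\v{z}kov doubling plus a sign analysis of $\sgn(u^\tau-v^\tau)\,(A(u^\tau)-A(v^\tau))\cdot\hat{n}$ on $\Gamma$. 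This is viable---it is the classical route of \cite{Bardos:1979us} that the strong traces of Theorem \ref{the_theorem1} make available in $L^\infty$, as the paper's introduction itself notes---but two steps you leave implicit must be supplied: (i) the sign of the boundary term does not follow from \eqref{DL_boundary} verbatim; one passes to the equivalent BLN formulation (Kru\v{z}kov entropies in \eqref{DL_boundary}) and then runs the elementary case analysis showing that two traces satisfying BLN with the same datum $u_b$ satisfy the desired inequality; (ii) in the doubling step the source $S$ is only bounded and measurable in $(t,x)$ (cf.~\eqref{eq:source-ass}), so splitting $S(t,x,u)-S(s,y,v)$ and removing the doubling requires a Lebesgue-point argument rather than continuity. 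Neither is a fatal gap, but both are precisely the work that the paper's kinetic argument packages into Lemma \ref{kinetic_form}; what your route buys in exchange is a more classical and self-contained uniqueness proof that dispenses with the kinetic boundary measure.
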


This theorem is proved in Section \ref{sec:proof-existence-uniqueness}.
As in \cite{Kwon}, the uniqueness argument utilizes the Dubois and LeFloch boundary
condition \eqref{DL_boundary} written in a kinetic form, which plays an essential role in
the proof of uniqueness.

In Section \ref{sec:DP} we apply Theorems \ref{the_theorem1} and \ref{the_theorem2} to
investigate the well-posedness of the initial-boundary value
problem for the so-called \textit{Degasperis-Procesi} equation
\begin{equation}
    \label{eq:DP}
    \pt  u-\ptxx u+4u\px u=3\px u\pxx u +u\pxxx u,
    \qquad (t,x)\in (0,T)\times (0,1),
\end{equation}
augmented with the initial condition
\begin{equation}\label{eq:init}
    u(0,x)=u_0(x),\qquad x\in (0,1),
\end{equation}
and the boundary data
\begin{equation}\label{eq:DPboundary}
    \begin{split}
        u(t,0)=g_0(t),&\qquad u(t,1)=g_1(t),\quad t\in(0,T),\\
        \px u(t,0)=h_0(t),&\qquad \px u(t,1)=h_1(t),\quad t\in(0,T).
    \end{split}
\end{equation}

We assume that
\begin{equation}\label{eq:ass}
	\begin{split}
		&u_0\in L^\infty(0,1),\qquad u_0(0)=g_0(0),\qquad u_0(1)=g_1(0),\\
		&g_0,\,g_1\in H^1(0,T),\quad h_0,\,h_1\in L^\infty(0,T).
	\end{split}
\end{equation}

Degasperis and Procesi \cite{DP:99} deduced \eqref{eq:DP} from the
following family of third order dispersive
nonlinear equations, indexed
over six constants $\alpha,\gamma,c_0,c_1,c_2,c_3\in \R$:
$$
\pt u + c_0 \px u + \gamma \pxxx u - \alpha^2 \ptxx u
= \px \left(c_1 u^2 + c_2 (\px u)^2 + c_3 u\pxx u\right).
$$
Using the method of asymptotic integrability, they
found that only three equations within this family
were asymptotically integrable up to the third order: the
\textit{KdV equation} ($\alpha=c_2=c_3=0$), the
\textit{Camassa-Holm equation} ($c_1=-\frac{3c_3}{2\alpha^2}$, $c_2=\frac{c_3}{2}$), and one
new equation ($c_1=-\frac{2c_3}{\alpha^2}$, $c_2=c_3$),
which properly scaled reads
\begin{equation}\label{eq:almostDP}
    \pt u + \px u + 6 u \px u + \pxxx u
    - \alpha^2\left(\ptxx u + \frac{9}{2}\px u \pxx u + \frac{3}{2}u \pxxx u\right)
    =0.
\end{equation}
By rescaling, shifting the dependent variable, and finally applying a
Galilean boost, equation \eqref{eq:almostDP} can be transformed into
the form \eqref{eq:DP}, see \cite{DHH:2003,DKK:2002} for details.

Degasperis, Holm, and Hone \cite{DKK:2002}  proved the
integrability of \eqref{eq:DP} by constructing a Lax pair. Moreover,
they provided a relation to a negative flow in the Kaup-Kupershmidt
hierarchy by a reciprocal transformation and derived two infinite
sequences of conserved quantities along with a bi-Hamiltonian
structure. Furthermore, they showed that the Degasperis-Procesi equation
are endowed with weak (continuous) solutions that are superpositions of
multipeakons and described the integrable finite-dimensional peakon
dynamics. An explicit solution was also found in the
perfectly anti-symmetric peakon-antipeakon collision case. Lundmark
and Szmigielski \cite{LS:2003}, using an inverse scattering
approach, computed $n$-peakon solutions to \eqref{eq:DP}.
Mustafa \cite{Mustafa:DP05} proved that smooth solutions to
\eqref{eq:DP} have infinite speed of propagation: they lose
instantly the property of having compact support.  Regarding  the
Cauchy problem for the Degasperis-Procesi equation  \eqref{eq:DP},
Escher, Liu, and Yin  have studied its well-posedness within certain
functional classes in a series of  papers \cite{ELY:2006, ELY:2007,
EY, LY:2006,Yin-DP:2003-JMAA,Yin-DP:2003-Illinois,Yin-DP:2004-Indiana,Yin-DP:2004-FunctAnal}.

The approach taken in the papers just listed emphasizes the similarities
between the Degasperis-Procesi equation and the Camassa-Holm equation, and
consequently the main focus has been on  (weak) continuous solutions.
In a rather different direction, Coclite and Karlsen \cite{Coclite:2005cr,CK:DPol,CK:DPUMI}
and Lundmark \cite{Lundmark:2005kz} initiated a study of
discontinuous (shock wave) solutions to the Degasperis-Procesi equation \eqref{eq:DP}.
In particular, the existence, uniqueness, and stability of entropy
solutions of the Cauchy problem for \eqref{eq:DP}
is proved in \cite{Coclite:2005cr,CK:DPol,CK:DPUMI}.

When it comes to initial-boundary value problems for
the Degasperis-Procesi equation much less is known.
The first results in that direction are
those of Escher and Yin \cite{EY, Z}, which apply to
continuous solutions.

To encompass discontinuous solutions we shall herein
extend the approach of \cite{Coclite:2005cr,CK:DPol,CK:DPUMI}, relying on
Theorems \ref{the_theorem1} and \ref{the_theorem2} above.
Following \cite{Coclite:2005cr} we rewrite \eqref{eq:DP},
\eqref{eq:init},  \eqref{eq:DPboundary} as a hyperbolic-elliptic system with
boundary conditions:
\begin{equation}\label{eq:DPw}
    \begin{cases}
        \pt u+u \px u+\px P= 0,&\quad (t,x)\in (0,T)\times(0,1),\\
        -\pxx P+P=\frac{3}{2}u^2,&\quad (t,x)\in (0,T)\times(0,1),\\
        u(0,x)=u_{0}(x),&\quad x\in (0,1),\\
        u(t,0)=g_{0}(t),\>\>\> u(t,1)=g_{1}(t),&\quad t\in(0,T),\\
        \px P(t,0)=\psi_{0}(t),\>\>\>  \px P(t,1)=\psi_{1}(t),&\quad t\in(0,T),
    \end{cases}
\end{equation}
where
\begin{equation}\label{eq:DPboundaryP}
    \psi_{0}=-g_{0}'- g_{0}h_{0},\qquad \psi_{1}=-g_{1}'- g_{1}h_{1}.
\end{equation}
Indeed, formally, from \eqref{eq:DP},
\begin{equation}\label{eq:DPw1}
    (1-\pxx)(\pt u+ u \px u+\px P )=0,
\end{equation}
since, by \eqref{eq:DPboundaryP}, the trace of
$\pt u+ u \px u+\px P$ vanishes at $x=0$ and $x=1,$ we
can invert the differential operator
$1-\pxx$ and pass from \eqref{eq:DPw1} to \eqref{eq:DPw}.

In the case $g_0=g_1=0$ we do not need any
boundary condition on $\px u$, indeed from \eqref{eq:DPboundaryP}
we have $\psi_0=\psi_1=0$.

The boundary conditions related to the $P$-equation
in \eqref{eq:DPw} are of Neumann type.
Let $\widetilde G=\widetilde G(x,y)$ be
the Green's function of the operator $1-\pxx$ with Neumann
boundary $\psi_0,\,\psi_1$ conditions on $(0,1)$.
The function $P$ has a convolution structure
\begin{equation}\label{eq:def-Pu}
    P(t,x)=P^u(t,x):=\frac32 \int_0^1 \widetilde G(x,y) u^2(t,y)\,dy,
\end{equation}
and \eqref{eq:DPw} can be written as a
conservation law with a nonlocal source
\begin{equation*}
    \pt  u+\px \left( \frac{u^2}{2}\right)=-\px P^u
    =- \frac32 \int_0^1 \px \widetilde G(x,y) u^2(t,y)\,dy.
\end{equation*}

Due to the regularizing effect of the elliptic equation in
\eqref{eq:DPw} we have that
\begin{equation}\label{eq:DPsmooth}
    u\in L^\infty((0,T)\times(0,1))\Longrightarrow
    P^u\in L^{\infty}(0,T;W^{2,\infty}(0,1)).
\end{equation}
Therefore, if a map $u\in L^\infty((0,T)\times(0,1))$ satisfies, for every convex
map $\eta\in  C^2$,
\begin{equation}
    \label{eq:DPentropy}
    \pt \eta(u)+ \px q(u)+\eta'(u)\px P^u\le 0, \qquad
    q(u)=\int^u \xi \eta'(\xi)\, d\xi,
\end{equation}
in the sense of distributions, then Theorem \ref{the_theorem1}
provides the existence of strong traces $u^\tau_0,\, u^\tau_1$ on the
boundaries $x=0,1$, respectively.

We say that  $u\in L^\infty((0,T)\times(0,1))$ is an entropy solution of the initial-boundary
value problem \eqref{eq:DP}, \eqref{eq:init}, \eqref{eq:DPboundary} if

(i) $u$ is a distributional solution of \eqref{eq:DPw};

(ii) for every convex function $\eta\in  C^2(\R)$ the
entropy inequality \eqref{eq:DPentropy} holds in the sense of distributions;

(iii) for every convex function $\eta\in  C^2$ with corresponding $q$ defined by $q'(u)=u\eta'(u)$, 
the boundary entropy condition
\begin{equation}\label{eq:DPentropyboundary}
	\begin{split}
		& q(u^\tau_0(t))-q(g_0(t))-\eta'(g_0(t))\frac{(u_0^\tau(t))^2-(g_0(t))^2}{2}
		\\ & \le 0\le 
		q(u^\tau_1(t))-q(g_1(t))-\eta'(g_1(t))\frac{(u_1^\tau(t))^2-(g_1(t))^2}{2}
	\end{split}
\end{equation}
holds for a.e.~$t\in(0,T)$.

Our main result for the initial-boundary value problem for the Degasperis-Procesi equation
is the following theorem, which is proved in Section \ref{sec:DP}.
\begin{theo}\label{the_theorem3}
Let $u_0,\,\gamma,\,g_0,\,g_1,\,h_0,\,h_1$ satisfy \eqref{eq:ass}.
The initial-boundary value problem
\eqref{eq:DP}, \eqref{eq:init}, \eqref{eq:DPboundary} possesses
an unique entropy solution $u\in L^\infty((0,T)\times(0,1))$.
\end{theo}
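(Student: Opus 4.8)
The plan is to exploit the reformulation of the problem as the hyperbolic--elliptic system \eqref{eq:DPw}, in which the first equation is a scalar conservation law with flux $A(u)=u^2/2$ and the nonlocal source $-\px P^u$. Since $A''\equiv1$, this flux is genuinely nonlinear in the sense of \eqref{hypothesis}, so Theorems \ref{the_theorem1} and \ref{the_theorem2} apply \emph{once the source is frozen}. The difficulty is that $-\px P^u$ depends on the entire unknown through $P^u$, and hence does not fit the local structure \eqref{eq:source-ass}. I would therefore handle existence by a fixed-point/iteration scheme and uniqueness by an $L^1$ stability estimate combined with the Lipschitz dependence of $v\mapsto\px P^v$.

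For existence, I would first record the regularizing estimate behind \eqref{eq:DPsmooth}: since $\widetilde G$ and $\px\widetilde G$ are bounded on $(0,1)$, one has $\norm{\px P^v(t,\cdot)}_{L^\infty(0,1)}\le C\bigl(\norm{v(t,\cdot)}_{L^2(0,1)}^2+\abs{\psi_0(t)}+\abs{\psi_1(t)}\bigr)$, with $\psi_0,\psi_1$ as in \eqref{eq:DPboundaryP}. Given $v$ in a ball of $L^\infty((0,T)\times(0,1))$, the function $S(t,x):=-\px P^v(t,x)$ is independent of $u$, so \eqref{eq:source-ass} holds trivially and Theorem \ref{the_theorem2} produces a unique entropy solution $u=\mathcal{T}(v)$ of $\pt u+\px(u^2/2)=S$ with data $u_0,g_0,g_1$ interpreted through \eqref{DL_boundary}; Theorem \ref{the_theorem1} then furnishes the traces $u^\tau_0,u^\tau_1$ needed in \eqref{eq:DPentropyboundary}. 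A maximum-principle bound gives $\norm{u(t)}_{L^\infty}\le M_0+\int_0^t\norm{S}_{L^\infty_x}\,ds$, which together with an energy estimate controlling $\norm{u(t)}_{L^2}$ keeps $\mathcal{T}$ mapping a suitable ball into itself on $[0,T]$, following \cite{Coclite:2005cr,CK:DPol}. Moreover $\mathcal{T}$ is contractive: if $u_i=\mathcal{T}(v_i)$, the $L^1$ stability underlying the uniqueness part of Theorem \ref{the_theorem2} yields $\norm{u_1(t)-u_2(t)}_{L^1}\le\int_0^t\norm{\px P^{v_1}-\px P^{v_2}}_{L^1}\,ds\le C\int_0^t\norm{v_1^2-v_2^2}_{L^1}\,ds\le CR\int_0^t\norm{v_1-v_2}_{L^1}\,ds$, where the boundary contributions to $P^{v_1}$ and $P^{v_2}$ cancel since they share the data $\psi_0,\psi_1$. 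A Banach fixed point (extended over $[0,T]$ by continuation, or via a Bielecki-weighted norm) then gives an entropy solution of \eqref{eq:DPw}, and the equivalence with \eqref{eq:DP} recorded around \eqref{eq:DPw1} delivers a solution of the original problem.

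For uniqueness, let $u$ and $\tilde u$ be two entropy solutions with the same data. Viewing each as an entropy solution of a conservation law with source $-\px P^{u}$, respectively $-\px P^{\tilde u}$, sharing $u_0$, the Dirichlet data $g_0,g_1$, and the boundary inequality \eqref{eq:DPentropyboundary} (which is exactly \eqref{DL_boundary} specialized to $A(u)=u^2/2$, with $\hat n=-1$ at $x=0$ and $\hat n=+1$ at $x=1$), the $L^1$ contraction of Theorem \ref{the_theorem2} gives $\frac{d}{dt}\norm{u(t)-\tilde u(t)}_{L^1}\le\norm{\px P^{u}-\px P^{\tilde u}}_{L^1}$. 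Since $\px P^{u}-\px P^{\tilde u}=\frac32\int_0^1\px\widetilde{G}(x,y)\bigl(u^2-\tilde u^2\bigr)(t,y)\,dy$ and $u,\tilde u$ are bounded, the right-hand side is $\le C\norm{u(t)-\tilde u(t)}_{L^1}$, and Gronwall's inequality forces $u=\tilde u$.

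The main obstacle is the a priori $L^\infty$ bound on the fixed interval $[0,T]$: because the source is controlled only by $\norm{u}_{L^2}^2$, the naive estimate is of Riccati type, and closing it globally in time requires the finer energy/maximum-principle analysis of \cite{Coclite:2005cr,CK:DPol} adapted to the bounded domain and to the inhomogeneous terms $\psi_0,\psi_1$. A second, more technical point is that, because $g_0,g_1\in H^1(0,T)$, the data $\psi_0,\psi_1=-g_i'-g_ih_i$ lie only in $L^2(0,T)$, so $S=-\px P^v$ is merely $L^2$ (not $L^\infty$) in time and just misses the hypothesis $S\in L^\infty(Q\times\R)$ of Theorem \ref{the_theorem2}; I would bridge this by applying the theorem to a time-mollified source and passing to the limit, noting that all the estimates above depend only on $\int_0^t\norm{S}_{L^\infty_x}\,ds$, which stays finite. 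Finally, one must verify rigorously, using the traces of Theorem \ref{the_theorem1}, that the choice \eqref{eq:DPboundaryP} makes the trace of $\pt u+u\px u+\px P$ vanish at $x=0,1$, so that the passage \eqref{eq:DPw1}$\to$\eqref{eq:DPw} is justified.
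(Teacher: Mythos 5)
Your architecture for existence is genuinely different from the paper's, and that difference is where the problem lies. The paper never iterates on Theorem \ref{the_theorem2}: it applies vanishing viscosity directly to the \emph{coupled} parabolic--elliptic system \eqref{eq:DPepsw}, proves a uniform energy estimate (Lemma \ref{lm:energyestimate}) via the auxiliary function $\te$ solving $-\pxx\te+4\te=\ve$, deduces $L^\infty$ bounds on $\Pe$ and $\ue$ (Lemmas \ref{lm:boundP}--\ref{lm:P_xxinfty}), passes to the limit by compensated compactness (Lemmas \ref{lm:convul1l4}--\ref{lm:convPl1l4}), and gets the boundary inequality \eqref{eq:DPentropyboundary} from Lemma \ref{BL_lemma}; Theorem \ref{the_theorem2} enters only for uniqueness. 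Your uniqueness argument --- the $L^1$ stability estimate run with the two frozen sources $-\px P^{u}$, $-\px P^{\tilde u}$, the Lipschitz bound $\norm{\px P^{u}-\px P^{\tilde u}}_{L^1}\le C\norm{u-\tilde u}_{L^1}$, and Gronwall --- is sound and is in fact the correct way to make precise what the paper compresses into ``by appealing to Theorem \ref{the_theorem2}''. Your flag about $\psi_0,\psi_1$ being only $L^2$ in time is also legitimate; the paper sidesteps it by mollifying the boundary data in the viscous problem and keeping only time-integrated quantities in \eqref{eq:beta}.

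The genuine gap is the global-in-time a priori bound, which you correctly identify as the main obstacle but then defer, and which cannot be imported into your frozen-source scheme as an afterthought: it is the crux, and the frozen-source structure destroys it. The paper's estimate rests on the cancellation in \eqref{eq:A34}: testing the transport equation against $\te-\pxx\te$, the cubic bulk term $\int_0^1\ue\px\ue\,(4\te-\pxx\te)\,dx$ combines with $A_4$ into boundary terms $\left[\ue^3/3-\ue^2\ome/2\right]_0^1$ \emph{only because} the elliptic equation $-\pxx\Pe+\Pe=\tfrac32\ue^2$ is driven by the same $\ue$ appearing in the transport part. When you freeze $v$ and solve $\pt u+\px(u^2/2)=-\px P^{v}$, this cancellation is gone and, as you note, only a Riccati inequality survives, giving existence on a lifespan shrinking like $1/R$; reaching an arbitrary $T$ by continuation then requires the energy estimate for the fixed point itself. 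But the fixed point is merely an $L^\infty$ entropy solution of the coupled system, and there the computation of Lemma \ref{lm:energyestimate} is not justified: writing $-\pxx\theta+4\theta=u-\omega$ with $\omega(t,x)=xg_1(t)+(1-x)g_0(t)$, one has $\pxx\theta=4\theta-(u-\omega)\in L^\infty$ only, so $\px(\theta-\pxx\theta)$ is a distribution, the pairing with $u^2/2$ is ill-defined, and $\pt u$ has no pointwise meaning; the formal identity $A_1=\tfrac12\tfrac{d}{dt}\norm{\theta}_{\widetilde H^2(0,1)}^2$ collapses. The paper avoids exactly this by performing the estimate on smooth solutions of the coupled viscous problem \eqref{eq:DPepsw}, where all integrations by parts are legitimate. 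So either you run the approximation with the elliptic equation coupled to the viscous unknown --- which collapses your scheme back into the paper's --- or you must supply a new derivation of the uniform $L^2$ bound compatible with frozen sources; as written, the proposal omits the key estimate of the whole existence proof.
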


\section{Proof of Theorem \ref{the_theorem1}}\label{sec:strong_trace}

\subsection{Weak boundary trace}
We first reformulate the relevant problems on local open subsets and
construct weak boundary traces of entropy solutions on these local sets.
The reason for working on local subsets is that we are going to use the blow-up method.
We split the boundary into a countable number of subsets.
Indeed, for each $\hat{x}\in\partial\Omega$, there exists $r_{\hat{x}}>0$, a $C^2$ mapping
$\gamma_{\hat{x}}:\R^{d-1}\to\R^{d-1}$, and an isometry for the
Euclidean norm $\mathcal{R}_{\hat{x}}:\R^{d}\to\R^{d}$ such that,
upon rotating, relabeling, and translating the coordinate axes if necesary,
\begin{align*}
    &\mathcal{R}_{\hat{x}}(\hat{x})=0, \\
    & \mathcal{R}_{\hat{x}}(\Omega)\cap (-r_{\hat{x}},r_{\hat{x}})^{d}
    =\{y=(y_0,\hat{y})\in(-r_{\hat{x}},r_{\hat{x}})^{d} \ | \
    y_0 >\gamma_{\hat{z}}(\hat{y})\}.
\end{align*}
We have
$$
\partial\Omega\subset \bigcup_{\hat{x}\in\partial\Omega}
\mathcal{R}_{\hat{x}}^{-1}((-r_{\hat{x}},r_{\hat{x}})^{d}).
$$
Hence, for each $\hat{z}=(\hat{t},\hat{x})\in\Gamma$, we obtain an
isometry map $\Lambda_{\hat{z}}:\R^{d+1}\to\R^{d+1}$ given by
$\Lambda_{\hat{z}}(t,x)=(y_0,t-\hat{t},\hat{y})$, where
$(y_0,\hat{y})=\mathcal{R}_{\hat{x}}(x)$. Then we have
$$
\Gamma=\bigcup_{\hat{z}\in \Gamma}
(\Lambda_{\hat{z}})^{-1}((0,r_{\hat{z}})
\times(-r_{\hat{z}},r_{\hat{z}})^d).
$$
Since the above collection of open sets is countable,
$$
\bigcup_{\hat{z}\in \Gamma}(\Lambda_{\hat{z}})^{-1}(\Gamma_{\hat{z}})
=\bigcup_{\alpha\in K}(\Lambda_{\alpha}) ^{-1}(\Gamma_\alpha),
$$
where $K$ is a countable set and
$$
\Gamma_\alpha=\Lambda_\alpha^{-1}(\{w=(w_0,\hat{w})\in
(0,T)\times(-r_\alpha,r_\alpha)^{d} \ | \
w_0=\Lambda_{\alpha}(\hat{w})\}),
$$
where $w=(w_0,\hat{w})=(y_0,t-\hat{t},\hat{y})$ and $\hat{w}=(t-\hat{t},\hat{y})$.
In an attempt to simplify the notation we
write $\alpha$ instead of $\hat{z}_\alpha$ in the indices.
We define
$$
Q_\alpha=\{w\in(0,r_\alpha)\times
(-r_\alpha,r_\alpha)^d \ | \ w_0>\Lambda_\alpha(\hat{w})\}.
$$
From now on we will work in $Q_\alpha$ and state the equations
in terms of the new $w$ variable.
To this end, define $u_\alpha:Q_\alpha\to\R$
by $u_\alpha(w)=u((\Lambda_\alpha)^{-1}(w))$ and set
$A_\alpha(\xi)=\Lambda_{\alpha}(\xi,A(\xi))$,
$\bH(\xi)=\Lambda_\alpha(\eta(\xi),q(\xi))$.
For every fixed $\alpha$, every deformation $\psi$, and
every $\hat{w}\in (-r_\alpha, r_\alpha)^d$, we define
$$
\tilde{\psi}(s,\hat{w})=(\Lambda_{\alpha}\circ\psi)(s,(\mathcal{R}_{\alpha})^{-1}(\hat{w})),
\qquad s=w_0.
$$

In terms of the $w$ variable, \eqref{eq_moment}
and \eqref{ineq_entropie} read respectively
\begin{equation}\label{eq_moment'}
    \Div_w \bA(u_\alpha)=0 \quad \text{in $Q_\alpha$}
\end{equation}
and
\begin{equation}\label{ineq_entropie'}
    \Div_w \bH(u_\alpha)\le 0 \quad \text{in $Q_\alpha$}.
\end{equation}

We now introduce a kinetic formulation of \eqref{eq_moment'}
and \eqref{ineq_entropie'}, cf.~\cite{LPT}. To do so
we set $L=\|u\|_{L^\infty(\Omega)}$, bring in a
new variable $\xi\in(-L,L)$, and introduce
for every $v\in(-L,L)$ the function
$$
\chi(v,\xi) =
\begin{cases}
    \un_{\{0\le\xi\le v\}}, & \text{if $v\geq0$}, \\
    -\un_{\{v\le\xi\le0\}}, & \text{if $v<0$}.
\end{cases}
$$
To effectively represent weak limits of nonlinear functions
of weakly converging sequences,  we introduce new functions, called
microscopic functions, which depend on $\xi$ and on an
additional variable $z$ \cite{Perthame:2002qy}.

\begin{defi}\label{def_qifunction}
Let $N$ be an integer and $\Open$ be an open set of $\R^N$. We
say that $f\in \Ly(\Open\times (-L,L))$ is a microscopic function if it obeys
$0\le \mathrm{sgn}(\xi)f(z,\xi)\le 1$ for almost every
$(z,\xi)$. We say that $f$ is a \chif\ if there exists a function
$u\in L^\infty (\Open)$ such that for a.e.~$z\in\Open$ there holds
$f(z,\cdot)=\chi(u(z),\cdot)$.
\end{defi}

For later use, let us collect the following results (cf.~\cite{Perthame:2002qy}).
\begin{lemm}\label{elementaire}
Fix an open set ${\Open}\subset \R^N$, and let $f_k\in
\Ly({\Open}\times (-L,L))$ be a sequence of \chif s
$L^\infty_{\text{weak-$\star$}}$-converging to $f\in \Ly(\Open\times (-L,L))$.
Introduce the functions $u_k(\cdot)=\int_{-L}^L f_k(\cdot,\xi)\,d\xi$ and
$u(\cdot)=\int_{-L}^L f(\cdot,\xi)\,d\xi$. Then, for almost every
$z\in{\Open}$, the function $f(z,\cdot)$ lies in $BV(-L,L)$.
Moreover, the following statements are equivalent:
\begin{itemize}
    \item $f_k$ converges strongly to $f$ in $L^1_{\mathrm{loc}}(\Open\times (-L,L))$.
    \item $u_k$ converges strongly to $u$ in $L^1_{\mathrm{loc}}(\Open)$.
    \item $f$ is a \chif.
\end{itemize}
\end{lemm}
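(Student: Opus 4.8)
The plan rests on three elementary pointwise identities satisfied by the kinetic function $\chi$, all immediate from its definition: for $|v|,|w|\le L$,
\begin{gather*}
    \int_{-L}^L\chi(v,\xi)\,d\xi=v,\qquad
    \int_{-L}^L|\chi(v,\xi)-\chi(w,\xi)|\,d\xi=|v-w|,\\
    \chi(v,\xi)^2=\sgn(\xi)\,\chi(v,\xi)=|\chi(v,\xi)|.
\end{gather*}
The second identity exhibits $v\mapsto\chi(v,\cdot)$ as an $L^1$-isometry, and the third is the ``square equals absolute value'' feature of these indicator-type functions; between them they drive the whole statement.

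For the $BV$ regularity I would pass to $g_k:=\sgn(\xi)f_k$, which takes values in $[0,1]$ and, for a.e.~$z$, is nonincreasing in $\xi$ on $(0,L)$ and nondecreasing on $(-L,0)$ (this one-sided monotone ``staircase'' is evident from $f_k(z,\cdot)=\chi(u_k(z),\cdot)$). Slice-wise monotonicity is equivalent to a one-signed distributional inequality for $\partial_\xi g_k$ on $\Open\times(0,L)$ and on $\Open\times(-L,0)$, tested against nonnegative $\phi$; since $\partial_\xi\phi\in L^1$ while $g_k\to g:=\sgn(\xi)f$ weak-$\star$ (multiplying by the fixed $L^\infty$ weight $\sgn(\xi)$ preserves weak-$\star$ convergence), these inequalities pass to the limit. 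Hence $g(z,\cdot)$ is monotone on each half-interval for a.e.~$z$, so $f(z,\cdot)=\sgn(\xi)g(z,\cdot)\in BV(-L,L)$, with total variation $\le 2$.

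For the equivalences I would integrate the isometry over a compact $K\subset\!\subset\Open$ to get $\|f_k-f_j\|_{L^1(K\times(-L,L))}=\|u_k-u_j\|_{L^1(K)}$; thus $(f_k)$ is Cauchy in $L^1_{\loc}(\Open\times(-L,L))$ if and only if $(u_k)$ is Cauchy in $L^1_{\loc}(\Open)$. Completeness together with uniqueness of the weak-$\star$ limit identifies any such $L^1$-limit with $f$, which simultaneously yields the equivalence of the first two bullets and the consistency $u=\int_{-L}^L f\,d\xi$. If moreover $f_k\to f$ in $L^1_{\loc}$, the isometry gives $\|f_k-\chi(u(\cdot),\cdot)\|_{L^1(K\times(-L,L))}=\|u_k-u\|_{L^1(K)}\to0$, forcing $f=\chi(u(\cdot),\cdot)$, a \chif.

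The main obstacle is the remaining implication, that $f$ being a \chif\ upgrades the weak-$\star$ convergence to strong convergence. Here the third identity is decisive: on $K\times(-L,L)$ one has $\|f_k\|_{L^2}^2=\int_K\int_{-L}^L\sgn(\xi)f_k\,d\xi\,dz$, and testing the weak-$\star$ convergence against $\sgn(\xi)\un_K\in L^1$ sends this to $\int_K\int_{-L}^L\sgn(\xi)f\,d\xi\,dz=\|f\|_{L^2}^2$, where the last equality uses $f^2=\sgn(\xi)f$, valid precisely because $f$ is a \chif. Since $f_k\rightharpoonup f$ weakly in the Hilbert space $L^2(K\times(-L,L))$, convergence of the norms forces strong $L^2$---hence $L^1_{\loc}$---convergence, closing the cycle. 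I expect this weak-to-strong step to be the crux; everything else is bookkeeping with the isometry.
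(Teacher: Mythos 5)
Your proof is correct. Note, however, that the paper never proves this lemma itself: it is quoted from Perthame's book \cite{Perthame:2002qy}, so the comparison is with the classical argument given there, which runs through Young measures. In that route one represents the weak-$\star$ limit as $f(z,\xi)=\int\chi(v,\xi)\,d\nu_z(v)$ for the Young measure $(\nu_z)$ generated by $(u_k)$, whence $\partial_\xi f(z,\cdot)=\delta_0-\nu_z$; this yields the $BV$ statement (with the sharp bound, total variation $\le 2$) in one stroke, and the three equivalences reduce to the standard fact that $\nu_z$ is a Dirac mass for a.e.\ $z$ if and only if $u_k\to u$ strongly. Your route is leaner and entirely self-contained: the $L^1$-isometry $\int_{-L}^L|\chi(v,\xi)-\chi(w,\xi)|\,d\xi=|v-w|$ disposes of the equivalence of the first two bullets and of the implication (strong convergence) $\Rightarrow$ ($f$ is a \chif), while the identity $\chi^2=\sgn(\xi)\chi$ converts the hypothesis that $f$ is a \chif\ into convergence of $L^2(K\times(-L,L))$ norms, so the Radon--Riesz property of that Hilbert space upgrades weak-$\star$ to strong convergence. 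This weak-to-strong step is the correct crux, and it is the same mechanism (``square equals modulus'' leaves no room for oscillation) that powers the Young-measure proof, just stripped of the measure-theoretic apparatus. What the classical approach buys in exchange is structural information beyond the statement of the lemma (the defect measure $\nu_z$, the exact variation bound), which is used elsewhere in this circle of ideas.

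Two small points to tighten. First, in the $BV$ part, the passage from the distributional inequalities $\partial_\xi\bigl(\sgn(\xi)f\bigr)\le 0$ on $\Open\times(0,L)$ and $\ge 0$ on $\Open\times(-L,0)$ to monotonicity of a.e.\ slice deserves one explicit line: mollify in $z$, observe that the mollified slices are monotone for every $z$, and pass to the limit along a subsequence whose slices converge in $L^1$ for a.e.\ $z$. Second, your parenthetical ``total variation $\le 2$'' does not follow from what you establish: monotone on each half-interval with values in $[0,1]$ only gives a bound of $4$, since the jump at $\xi=0$ can cost $2$ by itself; the constant $2$ requires the Young-measure representation. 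As the lemma claims only membership in $BV(-L,L)$, this is cosmetic.
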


Observe that if $f$ is a \chif\, then $u(z)=\int_{-L}^L f(z,\xi)\,d\xi$.
The following theorem is due to Lions, Perthame, and Tadmor \cite{LPT}.
\begin{theo}\label{theo_LPT}
A function $u\in\Ly(Q_\alpha)$,  with $|u|\le L$,  is a solution
of \eqref{eq_moment'} and \eqref{ineq_entropie'} if and
only if there exists a nonnegative measure
$m\in \mathcal{M}^+({Q_\alpha}\times(-L,L))$ such that the related \chif\
$f$ defined by $f(u(w),\xi)=\chi(u(w),\xi)$ for almost every
$(w,\xi)\in(Q_\alpha\times(-L,L))$ verifies
\begin{equation}\label{eq_cinetique}
    a(\xi)\cdot\nabla_w f-S(\cdot,\xi)(\partial_\xi f-\delta(\xi))=\partial_\xi m
    \quad \text{in $\mathcal{D}'(Q_\alpha\times(-L,L))$,}
\end{equation}
where $a(\xi):=A'_\alpha(\xi)$.
\end{theo}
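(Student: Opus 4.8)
The plan is to prove the two implications separately, using only the elementary calculus of the $\chi$-function to pass between the macroscopic entropy balance and the kinetic equation \eqref{eq_cinetique}. The identities I would rely on throughout are $\int_{-L}^L\chi(v,\xi)\,d\xi=v$; the distributional relation $\partial_\xi\chi(v,\xi)=\delta(\xi)-\delta(\xi-v)$ in $\mathcal D'(-L,L)$, which immediately gives the crucial equality $\partial_\xi f-\delta(\xi)=-\delta(\xi-u_\alpha)$; and, for any convex $C^2$ entropy $\eta$, the moment relations $\int_{-L}^L\eta'(\xi)\chi(v,\xi)\,d\xi=\eta(v)-\eta(0)$ and $\int_{-L}^L a(\xi)\eta'(\xi)\chi(v,\xi)\,d\xi=\bH(v)-\bH(0)$. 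The second of these uses that, by construction of the transformed pair, $\bH'(\xi)=\eta'(\xi)\bA'(\xi)=\eta'(\xi)a(\xi)$, which is precisely why the transformed flux $\bH$ reappears as an $\xi$-moment of $f$.

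For the direction ``entropy solution $\Rightarrow$ kinetic formulation'', I would first \emph{define} the distribution $m$ on $Q_\alpha\times(-L,L)$ by setting $\partial_\xi m:=a(\xi)\cdot\nabla_w f-S(\cdot,\xi)(\partial_\xi f-\delta(\xi))$ and integrating in $\xi$ starting from $-L$; since $|u_\alpha|\le L$ forces $f(\cdot,\xi)=0$ and the bracket to vanish for $|\xi|\ge L$, this determines $m$ unambiguously with $m(\cdot,\pm L)=0$. The heart of the matter is to show $m\ge 0$. Testing the defining relation against $\eta'(\xi)\varphi(w)$ with $\varphi\ge0$ and integrating by parts in $\xi$ (the boundary terms at $\xi=\pm L$ vanish because $m(\cdot,\pm L)=0$), the left-hand side collapses, via the moment identities above and the concentration $\partial_\xi f-\delta(\xi)=-\delta(\xi-u_\alpha)$, to the entropy-dissipation functional appearing in \eqref{ineq_entropie'} (the transformed version of \eqref{ineq_entropie}, which carries the macroscopic source contribution $\eta'(u_\alpha)S$), while the right-hand side equals $-\langle m,\eta''(\xi)\varphi\rangle$. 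Since \eqref{ineq_entropie'} renders the former $\le0$, we obtain $\langle m,\eta''\varphi\rangle\ge0$ for every convex $\eta$ and every $\varphi\ge0$; letting $\eta''$ run through an approximation of $\delta(\xi-\xi_0)$ (equivalently, using Kru\v{z}kov-type entropies) yields $m\ge0$ as an element of $\mathcal M^+(Q_\alpha\times(-L,L))$.

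The converse ``kinetic formulation $\Rightarrow$ entropy solution'' is the reverse computation. Multiplying \eqref{eq_cinetique} by $\eta'(\xi)$ and integrating over $\xi\in(-L,L)$, the flux term reconstructs $\Div_w\bH(u_\alpha)$, the source term reproduces exactly the macroscopic source because $\delta(\xi)-\partial_\xi f=\delta(\xi-u_\alpha)$ concentrates the $\xi$-integral at $\xi=u_\alpha$, and the right-hand side becomes $-\int_{-L}^L\eta''(\xi)\,dm\le0$ since $\eta''\ge0$ and $m\ge0$. This is precisely the transformed entropy inequality \eqref{ineq_entropie'}; choosing $\eta(\xi)=\pm\xi$, for which $\eta''\equiv0$, turns the inequality into an equality and recovers the conservation law \eqref{eq_moment'}.

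I expect the main obstacle to be the rigorous justification that the distribution $m$ defined above is in fact a locally finite \emph{nonnegative measure} rather than a mere distribution, together with the care needed in the integration-by-parts manipulations because the source $S(\cdot,\xi)$ carries its own $\xi$-dependence. Establishing that $m$ is a measure proceeds by observing that for each fixed convex $\eta$ the quantity $\int\eta''\,dm$ coincides with the nonnegative entropy-dissipation measure furnished by \eqref{ineq_entropie'}, and by controlling its total mass through a bound using $|u_\alpha|\le L$ and $S\in L^\infty$; the $\xi$-localization of these dissipation measures then assembles into the single measure $m$. Verifying that no boundary contributions at $\xi=\pm L$ are lost—which rests on $f(\cdot,\xi)=0$ for $|\xi|\ge L$—is the routine but essential bookkeeping underlying both implications.
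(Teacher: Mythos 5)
The paper itself contains no proof of this statement: it is quoted as a known result of Lions, Perthame, and Tadmor, with a citation to \cite{LPT}. Your proposal must therefore be measured against the standard proof in that reference (see also \cite{Perthame:2002qy}), and in outline you reproduce it faithfully: the $\chi$-function identities, the moment relations built on $\bH'=\eta'a$, the definition of $m$ as a $\xi$-primitive of the kinetic defect, nonnegativity via arbitrary convex entropies, and the converse by taking $\eta'$-moments (with the affine entropies $\eta=\pm\xi$ recovering the equation) are exactly the classical steps.

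There is, however, a genuine gap in your forward direction, located at the claim $m(\cdot,\pm L)=0$. You justify it by saying that $|u_\alpha|\le L$ forces $f$ and the defect to vanish for $|\xi|\ge L$. That argument is not valid: since $L=\norm{u}_{L^\infty}$, the defect does \emph{not} vanish near $\xi=L$ inside the domain $(-L,L)$ in general (whenever $\esssup u_\alpha=L$, the set $\{u_\alpha>\xi\}$ has positive measure for every $\xi<L$, so both $a(\xi)\cdot\nabla_w f$ and the concentration $S(\cdot,\xi)\,\delta(\xi-u_\alpha)$ are alive there); the vanishing of $f$ for $|\xi|\ge L$ only concerns the region outside the domain and gives no information on the limit of $m$ at the upper endpoint. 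What is actually true is
\[
m(\cdot,L^-)-m(\cdot,-L^+)
=\int_{-L}^{L}\Bigl[a(\xi)\cdot\nabla_w f-S(\cdot,\xi)\bigl(\partial_\xi f-\delta(\xi)\bigr)\Bigr]\,d\xi
=\Div_w \bA(u_\alpha)+S(\cdot,u_\alpha),
\]
so $m$ closes up at $\xi=L$ \emph{precisely because} $u_\alpha$ satisfies the macroscopic balance law \eqref{eq_moment'} (read, as you correctly read \eqref{ineq_entropie'}, with its source term; the exact sign of the source contribution depends on fixing the paper's own sign conventions, which are inconsistent between \eqref{eq_moment'} and \eqref{eq_cinetique}). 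In other words, hypothesis \eqref{eq_moment'} is exactly what guarantees $m(\cdot,L)=0$, yet your forward argument never invokes it---it uses only the entropy inequality. As written, the subsequent integration by parts against $\eta'(\xi)\varphi(w)$, whose $\xi$-part does not vanish at $\pm L$, is unjustified, and the nonnegativity proof does not close. The repair is short: use \eqref{eq_moment'} to show that the defect integrates to zero in $\xi$, conclude that $m$ vanishes at both endpoints (equivalently, after extending everything to $\xi\in\R$, that $m$ is compactly supported in $\xi$), and only then integrate by parts. A milder caveat, inherited from the paper's formulation rather than from your argument: with $L$ equal to $\norm{u}_{L^\infty}$ exactly and $m$ only a measure on the open interval, the same endpoint discussion is needed in the converse direction too; \cite{LPT} avoids it by posing the kinetic equation on all of $\R$ in $\xi$ with $m$ compactly supported.
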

Denote $a$ by $a=(a_0,\hat{a})$. 
To simplify the notation we keep denoting the normal
vectors by $n_s$ and $n$.

In what follows, for every fixed $\alpha$, we will consider  the set $Q_\alpha$, and the
\chif\ $f$ associated to $u_\alpha$. For every  regular deformation
$\psi$ and every $\hat{w}\in(-r_\alpha, r_\alpha)^d$ we set:
\begin{eqnarray*}
&&\tilde{\psi}(s,\hat{w})=\psi(s,\Lambda_\alpha^{-1}
(\gamma_\alpha(\hat{w}),\hat{w})),\\
&&f_\psi(s,\hat{w},\xi)=f(\tilde{\psi}(s,\hat{w}),\xi).
\end{eqnarray*}

 We will first show that $f_\psi$ has a weak trace at $s=0$, which does not depend on the
deformation $\psi$, i.e., the way chosen to reach the boundary.
\begin{lemm}\label{weak_trace}
Let $f$ be a solution of \eqref{eq_cinetique} in
$Q_\alpha\times(-L,L)$. Then there exists
$\ftau\in\Ly({(-r_\alpha,r_\alpha)^d}\times (-L,L))$ such that
$$
\esslim_{s\to0}  f_\psi(s,\cdot,\cdot)=\ftau \ \mathrm{in} \
H^{-1}({(-r_\alpha,r_\alpha)^d} \times (-L,L)),
$$
for all $\Gamma_\alpha$-regular deformation $\psi$. Moreover,
$\ftau$ is uniquely defined.
\end{lemm}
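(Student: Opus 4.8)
The plan is to reduce the $H^{-1}$ statement to a weak-$L^2$ statement, to extract the trace from the normal component of the kinetic flux via the divergence form of \eqref{eq_cinetique}, and then to use the genuine nonlinearity \eqref{hypothesis} to promote this partial information to a trace of the full \chif. First, since ${(-r_\alpha,r_\alpha)^d}\times(-L,L)$ is bounded, the embedding $L^2\hookrightarrow H^{-1}$ is compact; as $\{f\ :\ 0\le\sgn(\xi)f\le1\}$ is bounded in $L^2$, a weakly-$L^2$-convergent net of such functions converges strongly in $H^{-1}$. Hence $\esslim_{s\to0}f_\psi(s,\cdot,\cdot)=\ftau$ in $H^{-1}$ is equivalent to the statement that $s\mapsto\iint f_\psi(s,\hat w,\xi)\,\Phi(\hat w,\xi)\,d\hat w\,d\xi$ has an essential limit as $s\to0^+$ for every $\Phi$ in a dense subset of $L^2$, with bounds uniform in $\Phi$ via $|f_\psi|\le1$.

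Second, I would exploit the divergence structure. For $\phi\in C_c^1(-L,L)$ put $\mathcal{F}_\phi(w)=\int_{-L}^L\phi(\xi)\,a(\xi)\,f(w,\xi)\,d\xi$. Multiplying \eqref{eq_cinetique} by $\phi$ and integrating in $\xi$ gives $\Div_w\mathcal{F}_\phi=-\int\phi'(\xi)\,dm(\cdot,\xi)+\int\phi(\xi)S(\cdot,\xi)\bigl(\partial_\xi f-\delta(\xi)\bigr)\,d\xi$, whose right-hand side is a finite measure on $Q_\alpha$ because $m\in\mathcal{M}^+$ and $S\in\Ly$. Thus $\mathcal{F}_\phi$ is a bounded divergence-measure field. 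Testing \eqref{eq_cinetique} against $\theta(w_0)\zeta(\hat w)\phi(\xi)$ and letting $\theta$ approximate an indicator, one finds that the tested transversal flux $s\mapsto\iint a_0(\xi)\,f_\psi(s,\hat w,\xi)\,\zeta(\hat w)\,\phi(\xi)\,d\hat w\,d\xi$ is of locally bounded variation near $s=0$, its distributional $s$-derivative being the sum of a finite measure (from $m$) and an $L^1$ term (from the tangential fluxes $\hat a$ and the source). A $BV$ function has a one-sided essential limit, which is exactly the weak normal trace of $\mathcal{F}_\phi$ on $\Gamma_\alpha$.

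Third, to pass from the normal flux to $f$ itself I use \eqref{hypothesis}. After the change of variables $a_0(\xi)$ equals the normal component $\nu\cdot A'(\xi)$ of the flux derivative, where $\nu$ is the unit normal direction; applying \eqref{hypothesis} with $(\tau,\zeta)=(0,\nu)\ne(0,0)$ shows that $Z=\{\xi:a_0(\xi)=0\}$ has Lebesgue measure zero, while $\{a_0\ne0\}$ is open. For $\Phi=\zeta(\hat w)\phi(\xi)$ with $\phi\in C_c^1(\{a_0\ne0\})$ one has $\iint f_\psi\,\Phi=\iint a_0(\xi)f_\psi\,\zeta\,\bigl(\phi/a_0\bigr)$ with $\phi/a_0\in C_c^1$, so the second step furnishes an essential limit. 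Such products span a dense subspace of $L^2$ since $Z$ is null, and the uniform bound $|f_\psi|\le1$ makes the contribution of a shrinking neighborhood of $Z$ uniformly small in $s$; the essential limit therefore extends to all $\Phi\in L^2$. This defines $\ftau\in\Ly$ with $0\le\sgn(\xi)\ftau\le1$ and yields $\esslim_{s\to0}f_\psi=\ftau$.

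Independence of the deformation $\psi$ and uniqueness follow from the intrinsic nature of the weak normal trace of the divergence-measure field $\mathcal{F}_\phi$ (cf.~\cite{C-F}): two admissible deformations sweep, for small $s$, a region of volume tending to $0$, and integrating the finite measure $\Div_w\mathcal{F}_\phi$ over this region forces the two essential limits to agree; uniqueness of $\ftau$ is then immediate. I expect the genuine obstacle to be the second and fourth steps taken together, namely making rigorous the $BV$-in-the-transversal-variable claim up to the boundary, uniformly, when $f$ is only a weak solution and $m$ only a measure, and simultaneously controlling the tangential contribution along a \emph{general} deformation. It is precisely the deformation-independence that I anticipate to be most delicate, and it is here that the divergence-measure normal-trace theory, rather than a naive one-sided limit, is indispensable.
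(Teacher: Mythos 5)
Your proposal is correct and follows essentially the same route as the paper's proof: both reduce strong $H^{-1}$ convergence to weak-$\star$ convergence via compactness, observe that the $\xi$-averaged kinetic fluxes $\int \phi(\xi)a(\xi)f\,d\xi$ (the paper uses $\phi=\eta'$) are bounded divergence-measure fields, invoke the Chen--Frid normal trace theorem for existence and deformation-independence of their boundary traces, and then use the genuine nonlinearity \eqref{hypothesis} (so that $a(\xi)\cdot n$ vanishes only on a $\xi$-null set) to recover $f^\tau$ itself from these normal-flux traces. The only cosmetic differences are that you establish essential limits directly on a dense class of test functions and sketch the BV-in-$s$ mechanism behind the trace theorem, whereas the paper extracts subsequential weak-$\star$ limits and shows they all coincide, quoting Chen--Frid as a black box.
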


\begin{proof}
Since $\|f_\psi(s,\cdot,\cdot)\|_\Ly\le1$, by weak compactness and
the Sobolev imbedding theorem, for every sequence $s^k\overset{k\to\infty}{\to} 0$
there exists a subsequence $k_p\overset{p\to\infty}{\to} \infty$ and a
function $g^{\tau}_\psi\in L^\infty((-r_\alpha,r_\alpha)^d\times(-L,L))$
such that
\begin{equation}\label{trace_inconnue}
    f_\psi(s^{k_p},\cdot,\cdot)
    \overset{p\to\infty}{\to} g_\psi^{\tau} \quad
    \text{in $H^{-1}\cap L^\infty_{\text{weak-$\star$}}$},
\end{equation}
for every regular deformation $\psi$. Let us show that
$g_\psi^{\tau}$ is independent of the deformation $\psi$ and the
sequence $s^{k}$ and its subsequence $s^{k_p}$. To do so, let us
first consider the entropy flux
\begin{equation}\label{def_fpsi}
    \overline{q}_{\eta}(w)=\int_{-L}^{L}a(\xi)\eta'(\xi)f(w,\xi)\,d\xi,
\end{equation}
associated with the entropy $\eta$. Multiplying \eqref{eq_cinetique}
by $\eta'(\xi)$ and integrating it with respect to $\xi$ we find
$$
\Div_y\overline{q}_{\eta}=-\int_{-L}^{L}[\eta''(\xi)m_1-\eta'(\xi)m_2](w,d\xi)
\in \mathcal{M}({(-r_\alpha,r_\alpha)}^{d+1}),
$$
where
\begin{equation}\label{two_measure}
    m_1=Sf+m,\qquad
    m_2=-\partial_\xi Sf+\delta(\xi)S.
\end{equation}

We can now use the following
theorem (cf.~Chen and Frid in \cite{C-F}):
\begin{theo}
Let $\Omega$ be an open set with regular boundary $\partial\Omega$
and $F\in\left[L^\infty(\Omega)\right]^{d+1}$ be such
that $\Div_y F$ is a bounded measure.
Then there exists $F\cdot n\in L^\infty(\partial\Omega)$ such that
for every $\partial\Omega$-regular deformation $\psi$
$$
\esslim_{s\to0} F(\psi(s,\cdot))\cdot n_s(\cdot)=
F\cdot n \quad \text{in $L^\infty_{\text{weak-$\star$}}(\partial\Omega)$},
$$
where $n_s$ is a unit outward normal field
of $\psi(\{s\}\times\partial\Omega)$.
\end{theo}

This theorem ensures the existence of a function
$\overline{q}_{\eta}^\tau\cdot n \in\Ly({(-r_\alpha,r_\alpha)^d})$, which
does not depend on $\psi$, such that
\begin{equation}\label{trace_entropy}
    \overline{q}_{\eta}(\tilde{\psi}(s,\cdot))\cdot n_{s}(\cdot)
    \overset{s\to0}{\longrightarrow}
    \overline{q}_{\eta}^\tau\cdot n \quad
    \text{in $\mathcal{D}'({(-r_\alpha,r_\alpha)^d})$,}
\end{equation}
for every regular deformation $\psi$. The function $n_s$ converges
strongly to $n$, i.e., the unit outward normal to $Q_\alpha$. The
convergence takes place in $L^1({(-r_\alpha,r_\alpha)^d})$. So,
using \eqref{def_fpsi} and \eqref{trace_inconnue},
\eqref{trace_entropy}, we obtain
$$
\int\limits_{(-r_\alpha,r_\alpha)^d}\int_{-L}^{L}
\varphi(\wh)\eta'(\xi)a(\xi)\cdot n(\wh)g_\psi^{\tau}(\wh,\xi)\,d\xi\,d\wh
=\int\limits_{(-r_\alpha,r_\alpha)^d}
\overline{q}_{\eta}^\tau\cdot
n(\wh)\varphi(\wh)\,d\wh,
$$
for every test functions
$\varphi\in\mathcal{D}({(-r_\alpha,r_\alpha)^d})$. The right-hand
side of this equation is independent of $\psi$, the sequence $s^k$
and its subsequence $s^{k_p}$, so $g_\psi^{\tau}$ does not depend on
those quantities either thanks to (\ref{hypothesis}). The result is
obtained from the uniqueness of the limit.
\end{proof}

\subsection{Strong boundary trace}
Let us now show that entropy solutions possess a strong boundary trace.
To do so we will employ the blow-up method \cite{Vasseur} and apply the averaging lemma
to conclude that $\ftau(\wh,\cdot)$ is a \chif\ for almost every $(\wh,\xi)\in
{(-r_\alpha,r_\alpha)^d}\times (-L,L)$.
To this end, we shall rely on the following
lemma, which is a straightforward
consequence of  Lemma \ref{elementaire}.

\begin{lemm}\label{trace_strong}
The function $f^\tau$ is a $\chi$-function if and only if
$$
\esslim_{s\to0}
f_\psi(s,\cdot,\cdot)=f^\tau
\quad \text{in $L^1((-r_\alpha,r_\alpha)^d)$},
$$
for any deformation $\psi$.
\end{lemm}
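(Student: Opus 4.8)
The plan is to read Lemma \ref{trace_strong} as a slice‑by‑slice application of the equivalence among the three conditions in Lemma \ref{elementaire}, after reducing the essential limit $s\to0$ to sequential limits. First I would record two structural facts. Since $f$ is the \chif\ attached to $u_\alpha$, i.e.\ $f(w,\xi)=\chi(u_\alpha(w),\xi)$, for each fixed $s>0$ the slice $\wh\mapsto f_\psi(s,\wh,\xi)=\chi(u_\alpha(\tilde\psi(s,\wh)),\xi)$ is again a \chif\ on $(-r_\alpha,r_\alpha)^d$, namely the one associated with the bounded function $u_\alpha\circ\tilde\psi(s,\cdot)$. Second, Lemma \ref{weak_trace} already identifies the weak trace: for every sequence $s_k\to0$ one has $f_\psi(s_k,\cdot,\cdot)\to\ftau$ in $H^{-1}$, and since $\|f_\psi(s_k,\cdot,\cdot)\|_{\Ly}\le1$ this upgrades to $L^\infty_{\text{weak-$\star$}}$ convergence to the \emph{same} limit $\ftau$ (both limits are pinned down by testing against smooth functions, and $\ftau$ is unique). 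Hence the hypotheses of Lemma \ref{elementaire}, applied with the bounded open set $(-r_\alpha,r_\alpha)^d$ and $f_k=f_\psi(s_k,\cdot,\cdot)$, are met along every sequence.

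With this in hand both implications are immediate. Suppose $\ftau$ is a \chif. Fixing any sequence $s_k\to0$, the functions $f_\psi(s_k,\cdot,\cdot)$ form a sequence of \chif s converging weak‑$\star$ to $\ftau$, so the implication ``the limit is a \chif\ $\Rightarrow$ strong $\Lenloc$ convergence'' from Lemma \ref{elementaire} gives $f_\psi(s_k,\cdot,\cdot)\to\ftau$ strongly in $\Lenloc((-r_\alpha,r_\alpha)^d\times(-L,L))$; because the integrands are uniformly bounded and supported in the bounded set $(-r_\alpha,r_\alpha)^d\times(-L,L)$, this is genuine $L^1$ convergence. As the limit is the same $\ftau$ for every sequence, we conclude $\esslim_{s\to0}f_\psi(s,\cdot,\cdot)=\ftau$ in $L^1$.

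Conversely, assume the $L^1$ convergence holds. Along any sequence $s_k\to0$ the \chif s $f_\psi(s_k,\cdot,\cdot)$ then converge strongly in $L^1$ (hence in $\Lenloc$) to $\ftau$, and weak‑$\star$ by the uniform bound; the reverse implication in Lemma \ref{elementaire} (strong $\Lenloc$ convergence $\Rightarrow$ the limit is a \chif) forces $\ftau$ to be a \chif. This closes the equivalence.

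The only point that requires care—rather than being a genuine obstacle—is the passage between the \emph{continuous} essential limit in $s$ and the \emph{sequential} statements of Lemma \ref{elementaire}. This is handled by metrizability: convergence in the complete metric space $L^1$ is detected through sequences, so proving the conclusion along every sequence $s_k\to0$ (outside a null set of $s$) is equivalent to the $\esslim$ assertion. One should also check that the weak‑$\star$ limit extracted along different sequences is the \emph{same} function $\ftau$, which is precisely the uniqueness content of Lemma \ref{weak_trace}.
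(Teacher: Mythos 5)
Your proposal is correct and takes essentially the same approach as the paper: the paper offers no separate argument, stating only that the lemma is a straightforward consequence of Lemma \ref{elementaire}, and your write-up supplies precisely the details implicit in that remark (each slice $f_\psi(s,\cdot,\cdot)$ is a $\chi$-function, the $\psi$-independent weak trace of Lemma \ref{weak_trace} pins down the unique weak-$\star$ limit along every admissible sequence, Lemma \ref{elementaire} converts between strong $L^1_{\mathrm{loc}}$ convergence and the $\chi$-function property, and the uniform bound on the bounded set $(-r_\alpha,r_\alpha)^d\times(-L,L)$ upgrades $L^1_{\mathrm{loc}}$ to $L^1$). Your closing remark on reducing the essential limit to sequential limits outside a fixed null set is exactly the right way to make the reduction rigorous.
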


Let fix a specific deformation on $Q_\alpha$, namely
\begin{equation}\label{eqpsi0}
    \tilde{\psi}_0(s,\wh)=(s+\gamma_\alpha(\wh),\wh).
\end{equation}
We use the notation
\begin{equation*}
    \tf(s,\wh,\xi)=f_{\tilde{\psi}_0}(s,\wh,\xi)=f(\tilde{\psi}_0(s,\wh),\xi),
\end{equation*}
when we work with the deformation \eqref{eqpsi0}. Indeed, it is
enough to show strong trace of $f_\psi$ for the specific deformation
thanks to Lemma \ref{weak_trace}.
Notice that $\tilde{\psi}_0(s,\wh)\in Q_\alpha$ if and
only if $\wh\in{(-r_\alpha,r_\alpha)^d}$ and $0<s<r_\alpha$. From
\eqref{eq_cinetique} we find that  $\tf$ is a solution of
\begin{equation}\label{eq_cinetique_tilde}
    \tao(\wh,\xi)\partial_{\yo}\tf+\ah(\xi)\partial_{\wh}\tf
    =\partial_\xi \tm_1+\tm_2,
\end{equation}
where $\tm_i(s,\wh,\xi)=m_i(\tilde{\psi}_0(s,\wh),\xi)$ with $m_i$ defined
in \eqref{two_measure}, $i=1,2$ and
$\tao(\wh,\xi)=\lambda(\wh)a(\xi)\cdot n(\wh)$.

Before introducing the notion of rescaled solution, let us state two
lemmas (cf.~Vasseur \cite{Vasseur}).
\begin{lemm}\label{mesure_vanish}
There exists a sequence $\delta_k$ which converges to $0$ and a set
$\mathcal{E}\subset{(-r_\alpha,r_\alpha)^d}$ with
$\mathcal{L}({(-r_\alpha,r_\alpha)^d}\setminus\mathcal{E})=0$ such
that for every $\wh\in\mathcal{E}$ and every $R>0$
\begin{equation*}
    \lim_{k\to\infty}\frac{1}{\delta_n^d}\tilde{m_i}\Bigl((0,R\delta_k)\times(\wh+
    (-R\delta_k, R\delta_k)^d)\times {(-L,L)}\Bigr)=0, \quad i=1,2.
\end{equation*}
\end{lemm}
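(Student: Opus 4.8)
The plan is to reduce the statement to a single integral estimate in the tangential variable $\wh$ and then extract a good sequence $\delta_k$ by a diagonal argument. First I would replace the signed measures $\tilde m_i$ by their total variations $|\tilde m_i|$, since $|\tilde m_i(E)|\le |\tilde m_i|(E)$ for every Borel set $E$; it then suffices to prove the assertion with $|\tilde m_i|$ in place of $\tilde m_i$. Because $m_1$ and $m_2$ are finite (bounded) measures — recall $m_1=Sf+m$ and $m_2=-\partial_\xi Sf+\delta(\xi)S$ with $m\in\mathcal M^+$, $S\in L^\infty$, $\partial_\xi S\in L^\infty$ (the Lipschitz condition in \eqref{eq:source-ass}) and $0\le\sgn(\xi)f\le1$ — and since the deformation $\tilde\psi_0$ is a $C^1$ diffeomorphism with bounded Jacobian, each $|\tilde m_i|$ is a finite nonnegative measure supported in $\{s>0\}$.

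The key observation is that one must \emph{not} project out the normal variable $s$: the box has $(d+1)$-dimensional size of order $\delta^{d+1}$ while the normalization is only $\delta^d$, so the gain comes precisely from the shrinking of the $s$-interval. Concretely, for $h>0$ I would introduce the finite measure $\pi_i^h$ on $(-r_\alpha,r_\alpha)^d$ defined by $\pi_i^h(B)=|\tilde m_i|\big((0,h)\times B\times(-L,L)\big)$ and set $F_i(h)=\pi_i^h((-r_\alpha,r_\alpha)^d)$. Since $|\tilde m_i|$ charges no mass on $\{s=0\}$, we have $F_i(h)\to0$ as $h\to0^+$. Writing the box as $B_{R,\delta}(\wh)=(0,R\delta)\times(\wh+(-R\delta,R\delta)^d)\times(-L,L)$ and applying Tonelli's theorem,
\begin{equation*}
\int_{(-r_\alpha,r_\alpha)^d}\frac{|\tilde m_i|\big(B_{R,\delta}(\wh)\big)}{\delta^d}\,d\wh
=\frac{1}{\delta^d}\int_{(-r_\alpha,r_\alpha)^d}\Big(\int \mathbf 1[|\wh-\wh'|_\infty<R\delta]\,d\wh\Big)\,d\pi_i^{R\delta}(\wh')
\le (2R)^d F_i(R\delta),
\end{equation*}
because the inner integral is at most $(2R\delta)^d$. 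The right-hand side tends to $0$ as $\delta\to0$, so the nonnegative functions $\wh\mapsto \delta^{-d}|\tilde m_i|(B_{R,\delta}(\wh))$ converge to $0$ in $L^1((-r_\alpha,r_\alpha)^d)$ as $\delta\to0$, for each fixed $R$ and $i$.

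From this $L^1$-convergence I would pass to a.e.\ convergence along a subsequence. The remaining issue is to make a single sequence $\delta_k$ and a single full-measure set $\mathcal E$ work simultaneously for all $R>0$ and $i\in\{1,2\}$. Monotonicity handles the continuum of $R$'s: since $B_{R,\delta}(\wh)\subset B_{R',\delta}(\wh)$ for $R\le R'$, it is enough to control the countably many integer values $R\in\N$, every real $R$ being then dominated by $R=\lceil R\rceil$. I would therefore enumerate the countable family $(i,R)\in\{1,2\}\times\N$ and, starting from any sequence tending to $0$, extract successive subsequences along which $\delta^{-d}|\tilde m_i|(B_{R,\delta}(\cdot))\to0$ a.e., taking a diagonal sequence $\delta_k$ at the end; the set $\mathcal E$ is the complement of the countable union of the associated Lebesgue-null exceptional sets. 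The main obstacle — and the only genuinely delicate point — is the scaling mismatch noted above: one has to average in $\wh$ (rather than attempt a pointwise bound by a maximal function) in order to expose the extra factor $F_i(R\delta)\to0$ supplied by the vanishing of the measures near $\{s=0\}$. Once this is recognized, the subsequence extraction and diagonalization are routine.
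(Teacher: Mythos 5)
Your proof is correct. Note that the paper does not actually prove this lemma: it is stated with ``(cf.~Vasseur \cite{Vasseur})'' and no argument is given, so the relevant comparison is with Vasseur's original proof --- and your scheme (bound the $\wh$-average of $\delta^{-d}|\tilde m_i|(B_{R,\delta}(\wh))$ by $(2R)^d F_i(R\delta)$ via Tonelli, use continuity from above of a finite measure to get $F_i(R\delta)\to 0$, pass from $L^1$-convergence to a.e.\ convergence along a subsequence, and diagonalize over the countable family $i\in\{1,2\}$, $R\in\N$, handling all real $R$ by monotonicity of the boxes) is exactly that argument. Two of your side remarks are genuine improvements on the paper's wording: treating $m_1,m_2$ through their total variations (the paper calls the rescaled measures ``nonnegative'', which is not literally justified since $Sf$ is signed), and identifying the scaling mismatch $\delta^{d+1}$ versus $\delta^d$ as the reason one must average in $\wh$ rather than attempt a pointwise maximal-function bound.

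The one point you assert rather than prove, and which the argument genuinely needs, is finiteness of $|\tilde m_i|$ \emph{up to the boundary} $\{s=0\}$: local finiteness in the open set $Q_\alpha\times(-L,L)$ would not give $F_i(h)\to0$ (the measure $ds/s$ on $(0,1)$ is locally finite but $F(h)\equiv\infty$). Reading $\mathcal{M}^+$ in Theorem \ref{theo_LPT} as the space of bounded measures, as you do, is the standard convention and makes your proof complete; if one wants this from first principles, it follows by testing the entropy inequality for the bounded solution $u_\alpha$ with cutoffs $\phi_\delta$ that vanish in a $\delta$-layer around $\partial Q_\alpha$ and satisfy $|\nabla\phi_\delta|\lesssim 1/\delta$: since the layer has volume $O(\delta)$ for a Lipschitz (here $C^2$) boundary, the entropy production has mass bounded uniformly in $\delta$ on $\{\mathrm{dist}(\cdot,\partial Q_\alpha)>\delta\}$, whence $m$, and so $m_1$ and $m_2$, have finite total mass on all of $Q_\alpha\times(-L,L)$.
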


\begin{lemm}\label{lemme2}
There exists a subsequence, still denoted by $\delta_k$, and a subset
$\mathcal{E}'$ of ${(-r_\alpha,r_\alpha)^d}$ with
$\mathcal{E}'\subset\mathcal{E}$, $\mathcal{L}({(-r_\alpha,r_\alpha)^d}
\setminus\mathcal{E}')=0$, such that
for every $\wh\in\mathcal{E}'$ and every $R>0$ there holds
\begin{eqnarray*}
&&\qquad\lim_{\delta_k\to0}\int_{-L}^L\int_{(-R,R)^d}|\ftau(\wh,\xi)-\ftau(\wh+
\delta_k\byh,\xi)|\,d\byh\,d\xi=0,\\
&&\qquad\lim_{\delta_k\to0}\int_{-L}^L\int_{(-R,R)^d}|\tao(\wh,\xi)-\tao(\wh+\delta_k\byh,\xi)|\,d\byh\,d\xi=0.
\end{eqnarray*}
\end{lemm}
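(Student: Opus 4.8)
The plan is to read both limits as Lebesgue-point statements and to obtain them from the Lebesgue differentiation theorem. I would first regard $\wh\mapsto\ftau(\wh,\cdot)$ as a map into the separable space $L^1(-L,L)$; since $\ftau\in\Ly((-r_\alpha,r_\alpha)^d\times(-L,L))$ (it is the weak trace produced in Lemma \ref{weak_trace}), this map is strongly measurable and bounded, hence Bochner integrable on the cube $(-r_\alpha,r_\alpha)^d$. The coefficient $\tao$, defined by $\tao(\wh,\xi)=\lambda(\wh)\,a(\xi)\cdot n(\wh)$, is handled the same way; in fact $\wh\mapsto\tao(\wh,\cdot)$ is continuous, since $a$ is continuous and $\lambda,n$ inherit the regularity of the $C^2$ boundary, so for $\tao$ the conclusion already follows from dominated convergence.

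The key elementary reduction is a change of variables. Setting $\zeta=\delta_k\byh$ and using Fubini in $\xi$,
\begin{equation*}
\int_{-L}^L\int_{(-R,R)^d}|\ftau(\wh,\xi)-\ftau(\wh+\delta_k\byh,\xi)|\,d\byh\,d\xi=(2R)^d\,\frac{1}{(2R\delta_k)^d}\int_{(-R\delta_k,R\delta_k)^d}\|\ftau(\wh+\zeta,\cdot)-\ftau(\wh,\cdot)\|_{L^1(-L,L)}\,d\zeta,
\end{equation*}
so the left-hand side equals $(2R)^d$ times the average of $\|\ftau(\wh+\zeta,\cdot)-\ftau(\wh,\cdot)\|_{L^1(-L,L)}$ over the cube of half-side $R\delta_k$ centered at $\wh$, and likewise for $\tao$.

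By the vector-valued Lebesgue differentiation theorem---cubes centered at $\wh$ forming a legitimate differentiation basis---almost every $\wh$ is a Lebesgue point of both $\ftau$ and $\tao$, and at such a point these averages tend to $0$ as the half-side $R\delta_k\to0$. Since this is a genuine limit over the continuous scale, it holds at the same $\wh$ for every fixed $R>0$ and along the already-fixed sequence $\delta_k$ of Lemma \ref{mesure_vanish}; no genuine further extraction is needed, which is why one may keep the same notation $\delta_k$. I would then take $\mathcal{E}'$ to be the intersection of $\mathcal{E}$ with the two full-measure sets of Lebesgue points: being a finite intersection of full-measure sets it satisfies $\mathcal{E}'\subset\mathcal{E}$ and $\mathcal{L}((-r_\alpha,r_\alpha)^d\setminus\mathcal{E}')=0$, as required.

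The only delicate point---though a standard one---is the measure-theoretic setup: verifying that $\ftau$, a priori merely an $L^\infty$ function of the joint variable $(\wh,\xi)$, defines a Bochner-integrable $L^1(-L,L)$-valued function, and that a single full-measure set of Lebesgue points serves all $R$ simultaneously. Should one wish to bypass Bochner spaces, the scalar Lebesgue differentiation theorem on the product $(-r_\alpha,r_\alpha)^d\times(-L,L)$, combined with a reduction to a countable dense set of values of $R$ and Fubini in $\xi$, gives the same conclusion.
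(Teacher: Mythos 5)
Your proposal is correct: the change of variables identifying the integral with $(2R)^d$ times an average over cubes of half-side $R\delta_k$ centered at $\wh$, followed by the (vector-valued) Lebesgue differentiation theorem for the $L^1(-L,L)$-valued map $\wh\mapsto\ftau(\wh,\cdot)$, is exactly the standard argument; the paper itself gives no proof but defers to Vasseur, whose proof is this same Lebesgue-point argument. Your observations that $\tao$ is in fact continuous in $\wh$ (so dominated convergence suffices there) and that no genuine subsequence extraction is needed---the full-measure set of Lebesgue points serves all $R>0$ and the whole sequence $\delta_k$ simultaneously---are both accurate refinements consistent with the cited source.
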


Let us now introduce the localization method \cite{Vasseur}. We use the notation
$$
{Q^\delta_\alpha}={(0,r_\alpha/\delta)}
\times{(-r_\alpha/\delta,r_\alpha/\delta)^d}.
$$
The goal is to show that for every $\wh\in\mathcal{E}'$,
$\ftau(\wh,\cdot)$ is a \chif. From now on we fix such a $\wh\in\mathcal{E}'$.
Then we rescale the $\tf$ function by
introducing a new function $\tfe$, which depends on new variables
$(\byo,\byh)\in{Q^\delta_\alpha}$, defined by
\begin{equation*}
    \tfe(\byo,\byh,\xi)=\tf(\delta\byo,\wh+\delta\byh,\xi).
\end{equation*}
This function depends obviously on $\wh$ but since it is fixed throughout
this section, we skip it in the notation. The function $\tfe$ is
still a \chif\ and we notice that
\begin{equation*}
    \tfe(0,\byh,\xi)=\ftau(\wh+\delta\byh,\xi).
\end{equation*}
Hence we gain knowledge about $\ftau(\wh,\cdot)$ by studying the
limit of $\tfe$ when $\delta\to0$. We define
$$
\tao_\delta(\byh,\xi)=\tao(\wh+\delta\byh,\xi).
$$

In view of
\eqref{eq_cinetique_tilde},
\begin{equation}\label{eq_cinetique_locale}
   \tao_\delta(\byh,\xi) \partial_{\byo}\tfe+\ah(\xi)\partial_{\byh}\tfe=\partial_\xi\tme^1+\tme^2,
\end{equation}
where ${\tme}^i$ is the nonnegative measure defined for every real
numbers $R^j_1<R^j_2$, $L_1<L_2$ by
\begin{equation*}
    \tme^i\Bigl(\mathop{\Pi}\limits_{0\le j\le d}
    [R^j_1,R^j_2]\times[L_1,L_2]\Bigr)
    =\frac{1}{\delta^d}\tm_i\Bigl(\mathop{\Pi}\limits_{0\le j\le d}
    [y_j+\delta R^j_1, y_j+\delta R^j_2]\times[L_1,L_2]\Bigr),
\end{equation*}
for $i=1,2$.

We now pass to the limit when $\delta$ goes to 0 in
the rescaled equation. To this end, we shall need to prove strong convergence via
an application of an averaging lemma taken from Perthame and Souganidis \cite{P-S}.

\begin{theo}\label{lemmas}
Let $N$ be an integer, $f_n$  bounded in  $L^\infty(\R^{N+1})$ and
$\{h_{n}^1,h_{n}^2\}$ be relatively compact in $\left[L^p
(\R^{N+1})\right]^{2N}$ with  $1<p<+\infty$ solutions of the
transport equation:
\[
a(\xi)\cdot\nabla_{y}f_k=\partial_\xi(\nabla_{y}\cdot h^1_{k})+
\nabla_{y}\cdot h^2_{k},
\]
where $a\in\left[C^2(\R)\right]^{N}$ verifies the non-degeneracy
condition \eqref{hypothesis}. Let $\phi\in\mathcal{D}(\R)$, then the
average $u^\phi_{k}(w)=\int_{\R}\phi(\xi)f_{k}(w,\xi)\,d\xi$ is
relatively compact in $L^p(\R^{N})$.
\end{theo}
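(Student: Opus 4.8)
The plan is to prove this velocity averaging lemma by a Fourier decomposition in the macroscopic variable $y$, separating frequencies into a \emph{resonant} regime, where the transport symbol $a(\xi)\cdot\eta$ is small, and a \emph{non-resonant} regime, where it is bounded away from zero. On the non-resonant part the operator is invertible and compactness of $u_k^\phi$ is inherited directly from the relatively compact right-hand side; on the resonant part the non-degeneracy condition \eqref{hypothesis} forces the offending set of velocities $\xi$ to have small measure, so its contribution is uniformly small. Relative compactness of $\{u_k^\phi\}$ then follows by total boundedness: for each $\eps>0$ the family lies within $\eps$ of a relatively compact set. Concretely, I would first Fourier transform in $y$ (dual variable $\eta$), turning the transport equation into the algebraic relation
\[
  (a(\xi)\cdot\eta)\,\hat f_k(\eta,\xi)
  =\partial_\xi\big(\eta\cdot\hat h^1_k(\eta,\xi)\big)+\eta\cdot\hat h^2_k(\eta,\xi),
\]
and introduce, for a parameter $\delta>0$, a smooth symbol $\chi_\delta(\xi,\eta)$ equal to $1$ on $\{|a(\xi)\cdot\eta|\le\delta|\eta|\}$ and $0$ on $\{|a(\xi)\cdot\eta|\ge2\delta|\eta|\}$, and split $\widehat{u_k^\phi}=\widehat{R_k^\delta}+\widehat{N_k^\delta}$ with $\widehat{R_k^\delta}=\int\phi\,\chi_\delta\,\hat f_k\,d\xi$ and $\widehat{N_k^\delta}=\int\phi\,(1-\chi_\delta)\,\hat f_k\,d\xi$.

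For the resonant term $R_k^\delta$ the heart of the matter is the quantitative consequence of \eqref{hypothesis} that
\[
  \omega(\delta):=\sup_{|e|=1}\mathcal{L}\big(\{\xi\in\operatorname{supp}\phi:\ |a(\xi)\cdot e|\le2\delta\}\big)\ \xrightarrow[\delta\to0]{}\ 0,
\]
which I would derive from the measure-zero hypothesis by a covering argument over the compact sphere $\{|e|=1\}$ together with the continuity of $\delta\mapsto\mathcal{L}(\{|a(\xi)\cdot e|\le2\delta\})$. Since $\chi_\delta$ restricts the $\xi$-integration to this small set while $f_k$ is bounded, $R_k^\delta$ is uniformly small in $k$; in $L^2$ this is immediate from Plancherel and $\|f_k\|_{L^\infty}$, and for general $p$ one obtains $\|R_k^\delta\|_{L^p}\le C\,\omega(\delta)^\theta$ by treating the frequency cutoff with Littlewood--Paley / Mikhlin--H\"ormander multiplier theory.

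For the non-resonant term, on $\operatorname{supp}(1-\chi_\delta)$ one has $|a(\xi)\cdot\eta|\ge\delta|\eta|$, so dividing by $a(\xi)\cdot\eta$ and integrating by parts in $\xi$ onto the multiplier $m_\delta(\xi,\eta):=\phi(\xi)(1-\chi_\delta)/(a(\xi)\cdot\eta)$ yields
\[
  \widehat{N_k^\delta}(\eta)=-\int\partial_\xi m_\delta\;\big(\eta\cdot\hat h^1_k\big)\,d\xi+\int m_\delta\;\big(\eta\cdot\hat h^2_k\big)\,d\xi.
\]
The factor $\eta/(a(\xi)\cdot\eta)$ is a zeroth-order symbol, bounded by $1/\delta$ with Mikhlin-type derivative bounds \emph{for fixed $\delta$}, so up to bounded Fourier multipliers $N_k^\delta$ is a $\xi$-average of $h^1_k,h^2_k$ against fixed smooth compactly supported weights; averaging the relatively compact family $\{h^1_k,h^2_k\}\subset L^p(\R^{N+1})$ against such a weight produces a relatively compact family in $L^p(\R^N)$. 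Hence, given $\eps>0$, I choose $\delta$ with $C\omega(\delta)^\theta<\eps$ so that $\|u_k^\phi-N_k^\delta\|_{L^p}=\|R_k^\delta\|_{L^p}<\eps$ uniformly in $k$; since $\{N_k^\delta\}_k$ is relatively compact, $\{u_k^\phi\}_k$ is totally bounded, hence relatively compact.

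The genuinely hard part is the $L^p$ (rather than $L^2$) control of the resonant multiplier together with the uniform smallness $\omega(\delta)\to0$: the former requires Calder\'on--Zygmund / Littlewood--Paley machinery because Plancherel is unavailable for $p\neq2$, while the latter is precisely where the non-degeneracy hypothesis \eqref{hypothesis} enters and must be made uniform over the sphere. A secondary subtlety is that the $L^\infty$ bound on $f_k$ alone does not give decay of $u_k^\phi$ in $y$, so tightness for the global $L^p(\R^N)$ conclusion has to be extracted from the tightness of the relatively compact sources $h_k$ through the non-resonant representation, the resonant part being uniformly negligible.
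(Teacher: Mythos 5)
First, a point of reference: the paper itself does not prove this statement at all --- it is quoted as a known averaging lemma of Perthame and Souganidis \cite{P-S} --- so your proposal must be measured against that proof rather than against anything written in the paper. Your overall architecture is the classical one and matches the standard strategy: Fourier transform in $y$, split into a resonant regime $\{|a(\xi)\cdot\eta|\le\delta|\eta|\}$ and a non-resonant regime, use \eqref{hypothesis} to show the resonant velocity set is small, invert the symbol off the resonant set so that compactness is inherited from $\{h^1_k,h^2_k\}$, and conclude by total boundedness. Your derivation of $\omega(\delta)\to0$ (compactness of the sphere plus upper semicontinuity) and your treatment of the non-resonant term for \emph{fixed} $\delta$ are sound.

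The genuine gap sits exactly at the step you yourself flag as ``the genuinely hard part'': the claimed resonant bound $\|R_k^\delta\|_{L^p}\le C\,\omega(\delta)^\theta$ for $p\neq2$ does not follow from Mikhlin--H\"ormander or Littlewood--Paley theory. For fixed $\xi$ the symbol $\chi_\delta(\xi,\cdot)$ is a smooth cutoff to a conic $\delta$-neighborhood of the cone $\{a(\xi)\cdot\eta=0\}$; its Mikhlin norm blows up like a power of $\delta^{-1}$, and \emph{uniform-in-$\delta$} $L^p$ bounds for such conic cutoffs in dimension $N\ge3$ (the paper applies the lemma with $N=d+1$) are essentially the cone-multiplier problem, not a routine multiplier fact. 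Since \eqref{hypothesis} is purely qualitative --- it yields $\omega(\delta)\to0$ with no rate --- interpolating the Plancherel gain $\omega(\delta)^{1/2}$ at $p=2$ against any bound that degenerates as $\delta\to0$ gives a quantity of the form $\omega(\delta)^{(1-\theta)/2}\delta^{-m\theta}$, which need not tend to zero; so no choice of $\delta$ closes the total-boundedness argument. (Two secondary defects: even the $p=2$ step needs $f_k\in L^2$, which the hypothesis $f_k\in L^\infty(\R^{N+1})$ does not give globally; and your plan to extract global tightness ``through the non-resonant representation, the resonant part being uniformly negligible'' presupposes precisely the unproven global resonant estimate.) The known repairs are of two kinds. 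Perthame and Souganidis avoid conic Fourier cutoffs altogether and split using resolvent-type operators $\delta\bigl(\delta+a(\xi)\cdot\nabla_y\bigr)^{-1}$, which are averages of translations along characteristics and hence contractions on \emph{every} $L^p$ uniformly in $\delta$, so the qualitative smallness survives interpolation. Alternatively, under the hypotheses actually stated here ($f_k$ bounded in $L^\infty$, with compact supports in the paper's application), one can run your argument purely in $L^2_{\mathrm{loc}}$, obtain $L^2_{\mathrm{loc}}$-compactness of the averages, and then upgrade: for $p\le2$ via $L^2(K)\hookrightarrow L^p(K)$ (note the paper only invokes the lemma for $1\le p<\frac{d+2}{d+1}<2$), and for $p>2$ by interpolating with the uniform $L^\infty$ bound on the averages. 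As written, however, your central resonant estimate is unsupported, so the proof does not close.
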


\begin{lemm}\label{lem_limite}
There exist a sequence $\delta_k\to 0$ and a \chif\
$\tfi\in\Ly(\R^+\times\R\times (-L,L))$ such that $\tfen$ converges
strongly to $\tfi$ in $L^1_{\mathrm{loc}}(\R^+\times\R\times
(-L,L))$ and
\begin{equation}\label{eq_cinetique_limite}
   \tao(\wh,\xi) \partial_{\byo}\tfi+\ah(\xi)\cdot\partial_{\bwh}\tfi=0.
\end{equation}
\end{lemm}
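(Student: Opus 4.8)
The plan is to carry out the blow-up argument of Vasseur: extract a weak-$\star$ limit of the rescaled $\chi$-functions, identify the limiting transport equation using the vanishing of the rescaled measures, and then upgrade weak convergence to strong convergence by the averaging lemma (Theorem \ref{lemmas}), which at the same time forces the limit to be a \chif.

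First I would use that each $\tfe$ is a \chif, so $\|\tfe\|_{\Ly}\le1$; by weak-$\star$ compactness I may pass to a further subsequence of the $\delta_k$ supplied by Lemmas \ref{mesure_vanish} and \ref{lemme2} (not relabeled) along which $\tilde{f}_{\delta_k}\rightharpoonup\tfi$ weak-$\star$ in $\Ly(\R^+\times\R\times(-L,L))$, for some $\tfi$ with $|\tfi|\le1$. Since $\tao_{\delta_k}$ is independent of $\byo$, I would rewrite the transport term in \eqref{eq_cinetique_locale} as
\[
\tao_{\delta_k}\,\partial_{\byo}\tilde{f}_{\delta_k}
=\tao(\wh,\cdot)\,\partial_{\byo}\tilde{f}_{\delta_k}
+\partial_{\byo}\!\bigl[(\tao_{\delta_k}-\tao(\wh,\cdot))\,\tilde{f}_{\delta_k}\bigr],
\]
and move the second (error) term to the right-hand side. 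By the second limit in Lemma \ref{lemme2}, $\tao_{\delta_k}\to\tao(\wh,\cdot)$ strongly in $L^1_{\loc}$, so $(\tao_{\delta_k}-\tao(\wh,\cdot))\tilde{f}_{\delta_k}\to0$ in every $L^p_{\loc}$; moreover Lemma \ref{mesure_vanish} says the blow-up masses of $\tme^1,\tme^2$ tend to $0$. Thus \eqref{eq_cinetique_locale} becomes
\[
\tao(\wh,\xi)\,\partial_{\byo}\tilde{f}_{\delta_k}
+\ah(\xi)\cdot\partial_{\byh}\tilde{f}_{\delta_k}
=\partial_\xi\bigl(\nabla\cdot h_k^1\bigr)+\nabla\cdot h_k^2,
\]
a transport equation with \emph{spatially constant} velocity $b(\xi):=(\tao(\wh,\xi),\ah(\xi))$ and data $h_k^1,h_k^2\to0$ in $L^p_{\loc}$, where $\nabla=\nabla_{(\byo,\byh)}$.

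The decisive step is the averaging lemma. Two points must be verified. First, the non-degeneracy \eqref{hypothesis} for the velocity $b$: since $A_\alpha(\xi)=\Lambda_\alpha(\xi,A(\xi))$ with $\Lambda_\alpha$ a linear isometry, $a=A_\alpha'$ is the image of $(1,A'(\xi))$ under an invertible matrix, so $a$ inherits \eqref{hypothesis}; as $\tao(\wh,\cdot)=\lambda(\wh)\,a\cdot n(\wh)$ and $\ah$ are components/combinations of $a$ and $\lambda(\wh)>0$, any nontrivial $(\tau_0,\hat\tau)$ yields a nontrivial linear combination of the components of $a$, whose zero set has measure zero; hence $b$ satisfies \eqref{hypothesis}. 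Second, the right-hand data must be relatively compact in $L^p$: the only genuine difficulty is that the sources $\tme^1,\tme^2$ are measures, but by Lemma \ref{mesure_vanish} they tend to $0$ locally in total variation, hence to $0$ in $W^{-1,p}_{\loc}$ for some $p>1$, so one may write $\tme^i=\nabla\cdot h_k^i$ with $h_k^i\to0$ in $L^p_{\loc}$; the coefficient error is already such a divergence. I expect this representation to be where most of the technical care is needed.

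Granting Theorem \ref{lemmas}, the averages $\int\phi(\xi)\tilde{f}_{\delta_k}\,d\xi$ are relatively compact in $L^p_{\loc}$ for each $\phi\in\mathcal{D}(\R)$. Choosing $\phi\equiv1$ on $[-L,L]$ and recalling that $\tilde{f}_{\delta_k}$ vanishes for $|\xi|>L$, this gives strong $L^1_{\loc}$ convergence of $\int_{-L}^{L}\tilde{f}_{\delta_k}\,d\xi$; by Lemma \ref{elementaire} this is equivalent to $\tilde{f}_{\delta_k}\to\tfi$ strongly in $L^1_{\loc}(\R^+\times\R\times(-L,L))$ and to $\tfi$ being a \chif. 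Finally, with strong convergence of both $\tilde{f}_{\delta_k}$ and $\tao_{\delta_k}$ in hand, passing to the limit in \eqref{eq_cinetique_locale} is immediate and yields \eqref{eq_cinetique_limite}, completing the proof.
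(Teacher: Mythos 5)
Your proposal is correct and follows essentially the same route as the paper's proof: weak-$\star$ extraction of $\tfi$, freezing the coefficient by absorbing $\partial_{\byo}\bigl[(\tao(\wh,\cdot)-\tao_\delta)\tfe\bigr]$ into the right-hand side, vanishing of the rescaled measures via Lemma \ref{mesure_vanish} and strong convergence of the coefficient via Lemma \ref{lemme2}, the averaging lemma of Theorem \ref{lemmas} with the frozen velocity $a_{\wh}(\xi)=(\tao(\wh,\xi),\ah(\xi))$, and finally Lemma \ref{elementaire} to upgrade to strong $L^1_{\loc}$ convergence and the \chif\ property. The one step you gloss over (but explicitly flag) is the localization needed to invoke Theorem \ref{lemmas} on all of $\R^{N+1}$: the paper multiplies by cutoffs $\Phi_1\Phi_2$, which produces commutator terms that are merely uniformly bounded measures rather than vanishing ones, and these are handled by the compact embedding of bounded measures into $W^{-1,p}$ for $1\le p<\frac{d+2}{d+1}$ --- the same embedding you invoke, but used for relative compactness rather than convergence to zero, which is all that Theorem \ref{lemmas} requires.
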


\begin{proof}
We consider the sequence $\delta_n$ of Lemma~\ref{lemme2}. By weak
compactness, there exists a function $\tfi \in \Ly(\R^+\times
\R^d\times (-L,L))$ such that, up to extraction of a subsequence, $\tfen$ converges to
$\tfi$ in $L^\infty_{\text{weak-$\star$}}$. Thanks to Lemma~\ref{mesure_vanish},
$\tmen^i$ converges to 0 in the sense of measures. So
passing to the limit in \eqref{eq_cinetique_locale} gives
\eqref{eq_cinetique_limite}.

First, we localize in $(\bw,\xi)$. For any $R>0$ big enough, we
consider $\Phi_1,\Phi_2$ with values in $[0,1]$ such that
$\Phi_1\in\mathcal{D}(\R^+\times\R^d)$, $\Phi_2\in\mathcal{D}(\R)$,
and $\mathrm{Supp}(\Phi_1)\subset(1/(2R),2R)\times{(-2R,2R)^d}$,
$\mathrm{Supp}(\Phi_2)\subset (-2L,2L)$. Moreover, $\Phi_1(\bw)=1$
for $\bw\in(1/R,R)\times{(-R,R)^d}$ and $\Phi_2(\xi)=1$ for $\xi\in
(-L,L)$. Hence for $\delta<r_\alpha/(2R)$, we can define on
$\R\times\R^d\times\R$ the function
$$
\tilde{f}^{R}_\delta=\Phi_1\Phi_2\tilde{f}_\delta,
$$
(where $\tilde{f}^{R}_\delta=0$ if $\tfe$ is not defined). On
$(1/R,R)\times{(-R,R)}\times (-L,L)$ we have
$\tilde{f}^{R}_\delta=\tfe$. So, if we denote by
$a_{\wh}(\xi)=(\tao(\wh,\xi),\ah(\xi))$ (which depends only on $\xi$
since $\wh$ is fixed), from \eqref{eq_cinetique_locale} we get
\begin{align*}
    a_{\wh}(\xi)\cdot\nabla_{\bwh}\tilde{f}_\delta^{R} &
    =\partial_\xi(\Phi_1\Phi_2\tme^1)-\Phi_1\Phi_2'\tme^1
    +a_{\wh}(\xi)\cdot\nabla_{\bwh}\Phi_1\Phi_2\tfe^R
    +\Phi_1\Phi_2\tilde{m}^2_\delta\\
    &+\partial_s[(\tao(\wh,\xi)-\tao_\delta(\byh,\xi))\tilde{f}_\delta^R]\\
    & =\partial_\xi\mu_{1,\delta}+\mu_{2,\delta}+\partial_s[(\tao(\wh,\xi)-\tao_\delta(\byh,\xi))\tilde{f}_\delta^R],
\end{align*}
where $\mu_{1,\delta_k}$
and $\mu_{2,\delta_k}$ are measures uniformly bounded with respect
to $k$. In view of Lemma \ref{lemme2} we can see that
$\tao(\wh,\xi)-\tao_\delta(\byh,\xi)$ converges to $0$ in
$L^1_{\mathrm{loc}}(\R^d\times(-L,L))$. So it converges to 0 in
$L^p_{\mathrm{loc}}$ for every $1\le p<\infty$ since these
functions are bounded in $L^\infty$. Since the measures are
compactly imbedded in $W^{-1,p}$ for $1\le p<\frac{d+2}{d+1}$, we
can apply Theorem~\ref{lemmas} with $N=d+1$,
$f_k=\tilde{f}^{R}_{\delta_k}$, $\phi(\xi)=\Phi_2(\xi)$, and
$a(\xi)=a_{\wh}(\xi)$. It follows that $\int
\tilde{f}^{R}_\delta\Phi_2(\xi)\,d\xi$ is compact in $L^p$ for
$1\le p<\frac{d+2}{d+1}$. And so by uniqueness of the limit, $\int
\tfen(\cdot,\xi)\,d\xi$ converges strongly to
$\int\tfi(\cdot,\xi)\,d\xi$ in $L^1_{\mathrm{loc}}(\R^{2})$.
Lemma~\ref{elementaire} ensures us that $\tfen$ converges strongly
to $\tfi$ in $L^1_{\mathrm{loc}} (\R^{d+1}\times (-L,L))$ and
moreover that $\tfi$ is a \chif.
\end{proof}

We now turn to the characterization of the limit function $\tilde{f}_\infty$.
\begin{lemm}[\cite{Vasseur}]\label{trace_strong1}
For every $\wh\in\mathcal{E}'$,
$\tilde{f}_\infty(\underline{w},\xi)=f^\tau(\wh,\xi)$ for almost
every $(\underline{w},\xi)\in\R^{d+1}\times(-L,L)$, and the function
$f^\tau(\wh,\cdot)$ is a $\chi$-function.
\end{lemm}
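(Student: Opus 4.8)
The plan is to use Lemma \ref{lem_limite}, which furnishes a subsequence $\delta_n\to0$ along which $\tfen$ converges strongly in $L^1_\loc$ to a \chif\ $\tfi$ solving the transport equation \eqref{eq_cinetique_limite}, whose coefficients $\tao(\wh,\xi)$ and $\ah(\xi)$ depend only on $\xi$ (recall that $\wh\in\mathcal{E}'$ is fixed throughout). The goal is to show that this limit is in fact independent of $\bw=(\byo,\byh)$ and coincides with $\ftau(\wh,\cdot)$; once this is done, the fact that $\tfi$ is a \chif\ immediately yields that $\ftau(\wh,\cdot)$ is a \chif, which is the assertion of the lemma.

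First I would identify the boundary value of $\tfi$ at $\byo=0$. Since $\tfen(0,\byh,\xi)=\ftau(\wh+\delta_n\byh,\xi)$ and $\wh\in\mathcal{E}'$, the second convergence in Lemma \ref{lemme2} shows that, along $\delta_n$, the traces $\tfen(0,\cdot,\cdot)$ converge in $L^1_\loc$ to the profile $\ftau(\wh,\cdot)$, which is constant in $\byh$. On the other hand, the limit equation \eqref{eq_cinetique_limite} is of divergence form $\Div_{\bw}(a_{\wh}(\xi)\tfi)=0$ with $a_{\wh}(\xi)=(\tao(\wh,\xi),\ah(\xi))$ (the coefficients being $\bw$-independent), so $a_{\wh}(\xi)\tfi$ has vanishing divergence and the Chen--Frid trace theorem invoked in the proof of Lemma \ref{weak_trace} endows $\tfi$ with a weak normal trace at $\byo=0$. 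Matching the two computations, this trace equals $\ftau(\wh,\cdot)$ and is independent of $\byh$.

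Next I would propagate this information into the interior. Applying \eqref{hypothesis} with $(\tau,\zeta)=(0,\nh)$ shows that $\tao(\wh,\xi)=\lambda(\wh)\,a(\xi)\cdot n(\wh)$ is nonzero for a.e.~$\xi\in(-L,L)$, so \eqref{eq_cinetique_limite} can be solved for $\partial_{\byo}\tfi$ and becomes a linear transport equation in the evolution variable $\byo$ with $\bw$-independent coefficients and initial datum $\ftau(\wh,\cdot)$ that does not depend on $\byh$. Because a $\byh$-independent function solves the same equation with the same datum, uniqueness for this transport Cauchy problem forces $\tfi$ to be independent of $\byh$; then $\partial_{\byh}\tfi=0$, so \eqref{eq_cinetique_limite} gives $\tao(\wh,\xi)\partial_{\byo}\tfi=0$ and hence $\partial_{\byo}\tfi=0$ as well. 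Therefore $\tfi(\bw,\xi)=\ftau(\wh,\xi)$ for a.e.~$(\bw,\xi)$, and since $\tfi$ is a \chif\ so is $\ftau(\wh,\cdot)$.

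The main obstacle is the second step: justifying rigorously that the strong interior limit $\tfi$ attains the boundary datum $\ftau(\wh,\cdot)$ at $\byo=0$. The strong convergence supplied by Lemma \ref{lem_limite} holds only in the open half-space $\byo>0$, so transferring information to the slice $\byo=0$ requires the trace machinery: one must check that $a_{\wh}(\xi)\tfi$ has a bounded (here vanishing) divergence so that a weak normal trace exists, and that this trace is the $L^1_\loc$-limit of the traces $\ftau(\wh+\delta_n\,\cdot,\cdot)$ rather than some other boundary value. This is exactly where the non-degeneracy condition \eqref{hypothesis}---which guarantees $\tao(\wh,\cdot)\neq0$ a.e.---is indispensable, both to make the trace identification meaningful and to run the uniqueness argument in the final propagation step.
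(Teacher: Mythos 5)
Your overall strategy is the right one---it is essentially Vasseur's argument, which the paper itself adopts by citation rather than reproving---but the step you yourself flag as ``the main obstacle'' is a genuine gap, and the mechanism you propose for it would not work as stated. Strong $L^1_\loc$ convergence $\tfen\to\tfi$ in the \emph{open} half-space $\{\byo>0\}$, together with $L^1_\loc$ convergence of the boundary slices $\tfen(0,\cdot,\cdot)=\ftau(\wh+\delta_n\,\cdot,\cdot)\to\ftau(\wh,\cdot)$ (which, incidentally, is the \emph{first} convergence in Lemma \ref{lemme2}, not the second), does not imply that the weak normal trace of $\tfi$ is the limit of the traces of $\tfen$: boundary layers of width $O(\delta_n)$ are invisible to interior $L^1_\loc$ convergence (e.g.\ $f_n(\byo)=\phi(\byo/\delta_n)$ with $\phi(0)=1$ and $\phi$ compactly supported converges to $0$ in $L^1_\loc(0,\infty)$ while every trace equals $1$). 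Applying Chen--Frid to the limit field $a_{\wh}(\xi)\tfi$ gives \emph{existence} of a normal trace but says nothing about its value, and ``matching the two computations'' is precisely what has to be proved. Nor is the non-degeneracy condition \eqref{hypothesis} what rescues this step: its only role is to ensure $\tao(\wh,\xi)=\lambda(\wh)\,a(\xi)\cdot n(\wh)\neq0$ for a.e.\ $\xi$, i.e.\ transversality.

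What actually closes the gap is the uniform PDE structure along the sequence: since Lemma \ref{weak_trace} already provides the weak trace of $\tf$ at $s=0$, the rescaled kinetic equation \eqref{eq_cinetique_locale} can be written in weak form \emph{up to the boundary}, with boundary term $\int \taoen(\byh,\xi)\,\ftau(\wh+\delta_n\byh,\xi)\,\varphi(0,\byh,\xi)\,d\byh\,d\xi$. Passing to the limit in that formulation---interior terms by Lemma \ref{lem_limite}, measure terms by Lemma \ref{mesure_vanish}, coefficient and boundary terms by Lemma \ref{lemme2}---shows that $\tfi$ solves the free transport equation in the \emph{closed} half-space with normal flux trace $\tao(\wh,\xi)\ftau(\wh,\xi)$ at $\byo=0$. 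Only after this identification does your propagation step apply, and that part of your argument is fine: for a.e.\ $\xi$ the characteristics of \eqref{eq_cinetique_limite} are transversal to $\{\byo=0\}$ by \eqref{hypothesis}, the bounded solution is determined by its flux trace, hence $\tfi(\bw,\xi)=\ftau(\wh,\xi)$ a.e., and since $\tfi$ is a \chif\ (Lemma \ref{lem_limite}) so is $\ftau(\wh,\cdot)$. In short: right skeleton, but the central trace identification must be done by passing to the limit in the boundary weak formulation, not asserted from interior convergence plus Chen--Frid.
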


Thus, from Propositions \ref{trace_strong}
and \ref{trace_strong1}, we can prove Theorem \ref{the_theorem1}.

\begin{proof}[Proof of Theorem \ref{the_theorem1}]
For every $\alpha$ and every deformation $\psi$, we have
\begin{equation*}
    \esslim_{s\to0}\int\limits_{(-r_\alpha,r_\alpha)^d}
    \int_{-L}^L |f_\psi(s,\wh,\xi)-f^\tau(\wh,\xi)|d\xi d\wh=0.
\end{equation*}
We define $u^\tau$ by
$$
u^\tau(\hat{z})=\int_{-L}^L f^\tau(\wh,\xi)d\xi,
\qquad \text{if $(\gamma_\alpha(\wh),\wh)=\Lambda_\alpha(\hat{z})$.}
$$
For every compact subset $K$ of $(0,T)\times\partial\Omega$, there exists a
finite set $I_0$ such that $K\subset\bigcup_{\alpha\in I_0}$ and
$$
\int_K|u(\psi(s,\hat{z}))-u^\tau(\hat{z})|d\sigma(\hat{z})\le\sum_{\alpha\in
I_0}\int_{\Gamma_\alpha}|u(\psi(s,\hat{z}))-u^\tau(\hat{z})|d\sigma(\hat{z}),
$$
which converges to $0$ as $s$ tends to $0$.
This concludes the proof of Theorem \ref{the_theorem1}.
\end{proof}

\section{Proof of Theorem \ref{the_theorem2}}\label{sec:proof-existence-uniqueness}

\subsection{Existence proof}\label{subsec:proof-existence}
In this section we will show the existence of an entropy solution for
the initial-boundary value problem \eqref{eq_moment}, \eqref {eq:initial}, and \eqref{eq:boundary},
with the boundary condition \eqref{eq:boundary}
interpreted in the sense of \eqref{DL_boundary}.

Let $\{S^\eps\}_{\eps>0}$ be a a sequence of smooth functions
converging in $L^1_{\mathrm{loc}}$ to $S$ with respect to variables
$(t,x)$, for example obtained by mollifying the function $S$, and
consider smooth solutions to the uniformly parabolic equation
\begin{equation}\label{diffusion_eq}
    \partial_t u^\eps+\Div_x A(u^\eps)
    =S^\eps(t,x,u^\eps)+\eps\Delta_xu^\eps,
\end{equation}
with initial and boundary data
\begin{equation}\label{eq:initial-boundary-viscous}
    u^\eps(0,\cdot)=u_0 \qquad
    u^\eps |_{\Gamma}=u_b.
\end{equation}
For the sake of simplicity in this proof, we will
assume that the data $u_0,u_b$ are smooth functions. Then, for each
$\eps>0$, the existence of a unique smooth solution of the
initial-boundary \eqref{diffusion_eq},
\eqref{eq:initial-boundary-viscous} value problem is a standard
result.

By the maximum principle,
\begin{equation}\label{max_principle}
    |u^\eps(t,x)|\le
    \norm{u_0}_{L^\infty}+\norm{S}_{L^\infty}T.
\end{equation}

For any convex entropy function $\eta$ and corresponding
entropy flux function $q$ with $q'=\eta' A'$, multiplying \eqref{diffusion_eq}
$\eta'(u^\eps)$ yields
\begin{equation}\label{compact_approximation}
    \partial_t \eta(u^\eps)
    +\Div_xq(u^\eps)-\eta'(u^\eps)S^\eps(t,x,u^\eps)=
    \eps\Delta_x\eta(u^\eps)-\eps\eta''(u^\eps)
    |\nabla_xu^\eps|^2.
\end{equation}
For any function $\varphi\in C^\infty_\mathrm{c}(Q)$, it follows
from \eqref{compact_approximation} that
\begin{equation}\label{diffusion_inequality}
    \begin{split}
        &\int_{Q} \eta(u^\eps)\partial_t\varphi +q(u^\eps)\cdot\nabla_x\varphi \, dtdx
        \\ &
        =\int_{Q}\eps \eta'(u^\eps)\nabla_x u^\eps \cdot \nabla_x \varphi \, dtdx
        +\int_Q \eta''(u^\eps)\eps|\nabla_xu^\eps|^2\varphi \, dtdx
        \\ & \qquad
        -\int_{Q}S^\eps(t,x,u^\eps)u^\eps\varphi \, dtdx.
    \end{split}
\end{equation}
Let $K$ be an arbitrary compact subset of $Q$ and
choose in \eqref{diffusion_inequality} a
function $\varphi\in C^\infty_\mathrm{c}(Q)$ satisfying
$$
\varphi|_K=1 ,\qquad 0\le\varphi \le1.
$$
It follows that
\begin{equation*}
    \int_Q |S^\eps(t,x,u^\eps)u^\eps\varphi| \, dtdx
    \le C(T,\varphi,\|u_0\|_{L^\infty})\norm{S}_{L^\infty},
\end{equation*}
thanks to \eqref{max_principle}. Consequently,
\begin{equation}\label{compact_eq_inequality}
    \int_Q \eps \abs{\nabla_x u^\eps}^2 \, dtdx\le C
\end{equation}
and hence we obtain that $\partial_t \eta(u^\eps)+\Div_x q(u^\eps)$
is compact in $H^{-1}_{\mathrm{loc}}(Q)$.
We can now apply, for example, Tartar's compensated compactness
method \cite{Tartar:1983ul} to conclude the existence of subsequence, still
labeled $u^\eps$, converging to a limit $u$ a.e.~and
in $\Lenloc$ such that the interior entropy inequality holds:
\begin{equation*}
    \int_Q \eta(u)\partial_t\phi
    +q(u)\cdot\nabla_x\phi+\eta(u)S(t,x,u)\phi \, dtdx\geq 0,
    \quad \forall \phi\in C^\infty_{c}(Q), \phi\ge0.
\end{equation*}
It remains to prove that the limit $u$ satisfies
the Dubois and Le Floch's boundary condition \eqref{DL_boundary}.

\begin{lemm}\label{BL_lemma}
Let $u$ be the limit function constructed above.
Then, for any convex entropy-entropy flux pair $(\eta,q)$,
$$
\Bigl[q(u^\tau)-q(u_b)-\eta'(u_b)(A(u^\tau)-A(u_b))\Bigr]
\cdot\hat{n}\geq0
$$
where $B^\tau$ is the trace of $B$ on
$(0,T)\times\partial\Omega$ and  $\hat{n}$ is the unit
outward normal to $\partial\Omega$.
\end{lemm}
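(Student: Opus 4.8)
The plan is to extract the boundary inequality directly from the viscous approximation by testing a relative-entropy inequality against functions concentrated in a thin boundary layer. First I extend the boundary datum $u_b$ smoothly from $\Gamma$ into a neighborhood in $Q$ (keeping the name $u_b$), and introduce the relative entropy pair
\begin{equation*}
\bar\eta(v)=\eta(v)-\eta(u_b)-\eta'(u_b)(v-u_b),\qquad \bar q(v)=q(v)-q(u_b)-\eta'(u_b)(A(v)-A(u_b)),
\end{equation*}
so that $\bar\eta\ge0$ by convexity and the target quantity is exactly $\bar q(u^\tau)\cdot\hat n$. A direct computation using \eqref{diffusion_eq} and $q'=\eta'A'$ yields the entropy inequality
\begin{equation*}
\pt\bar\eta(u^\eps)+\Div_x\bar q(u^\eps)\le \bar\eta'(u^\eps)S^\eps+\eps\Delta_x\bar\eta(u^\eps)+\eps E^\eps+R^\eps,
\end{equation*}
where the convexity term $-\eps\eta''(u^\eps)\abs{\nabla_x u^\eps}^2\le0$ has been dropped, $R^\eps$ is a bounded remainder generated by the derivatives of the extension $u_b$, and $E^\eps$ is a product of $\nabla_x u^\eps$ with bounded factors.

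Next I test this inequality against $\varphi(t,x)=\theta(t,\hat x)\beta_\sigma(x)\ge0$, where $\theta\ge0$ is supported in a boundary patch and $\beta_\sigma$ equals $1$ on $\partial\Omega$, vanishes for $\mathrm{dist}(x,\partial\Omega)\ge\sigma$, and satisfies $\nabla_x\beta_\sigma=\sigma^{-1}\hat n\,\un_{\{0<\mathrm{dist}<\sigma\}}$. The decisive point is that on $\Gamma$ one has $u^\eps=u_b$, whence $\bar\eta(u^\eps)=\bar q(u^\eps)=0$ and $\bar\eta'(u^\eps)=\eta'(u^\eps)-\eta'(u_b)=0$; consequently every boundary term produced by integrating $\Div_x\bar q(u^\eps)$ and $\eps\Delta_x\bar\eta(u^\eps)$ by parts vanishes identically. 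Fixing $\sigma$ and letting $\eps\to0$, the viscous contributions disappear thanks to \eqref{compact_eq_inequality} (Cauchy--Schwarz controls $\eps E^\eps$ and the gradient generated by the Laplacian), while $u^\eps\to u$ in $\Lenloc$, leaving
\begin{equation*}
-\frac1\sigma\int_{\{0<\mathrm{dist}<\sigma\}}\bar q(u)\cdot\hat n\,\theta\,dt\,dx\le \int_Q\bigl(\bar\eta(u)\pt\theta+\bar q(u)\cdot\nabla_x\theta+[\bar\eta'(u)S+R]\theta\bigr)\beta_\sigma\,dt\,dx.
\end{equation*}

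Finally I let $\sigma\to0$. The right-hand side tends to $0$ because every integrand lies in $\Ly$ while the support $\{0<\mathrm{dist}<\sigma\}$ has measure $O(\sigma)$. For the left-hand side I invoke the strong trace furnished by Theorem \ref{the_theorem1}: foliating the layer by a normal regular deformation $\psi$ and using the identity $[F(u)]^\tau=F(u^\tau)$ with $F(v)=\bar q(v)\cdot\hat n$, the layer average converges to the boundary integral, so the left-hand side tends to $-\int_\Gamma\bar q(u^\tau)\cdot\hat n\,\theta\,d\sigma$. Since $\theta\ge0$ is arbitrary, this gives $\bar q(u^\tau)\cdot\hat n\ge0$ a.e.\ on $\Gamma$, which is precisely the asserted inequality. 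The main obstacle is the coordination of the iterated limit: one must send $\eps\to0$ before $\sigma\to0$, so that the viscous boundary-layer terms (of size $O(\sqrt\eps/\sigma)$) are killed while the trace information survives, and one must control the Jacobian relating the flat layer integral $\sigma^{-1}\int_{\{0<\mathrm{dist}<\sigma\}}$ to the deformation-based limit in Theorem \ref{the_theorem1}, which tends to $1$ as $\sigma\to0$.
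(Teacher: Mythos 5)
Your proof is correct, but it takes a genuinely different route from the paper's. The paper tests the viscous entropy balance \eqref{compact_approximation} against $\theta(t)\zeta_\delta(x)$, keeps the resulting boundary terms, and is then forced to evaluate the viscous boundary flux $\eps\int_\Gamma \eta'(u_b)\,\nabla_x u^\eps\cdot\hat n\,\theta$, which does \emph{not} vanish as $\eps\to 0$ (there is an $O(\eps)$ boundary layer in which $u^\eps$ jumps from $u^\tau$ to $u_b$); it identifies its limit by re-running the whole argument with $\eta=\mathrm{Id}$, where the entropy balance is an exact equality, and this is precisely where the third term $\eta'(u_b)(A(u^\tau)-A(u_b))\cdot\hat n$ of \eqref{DL_boundary} comes from. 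You instead build that term into the relative flux $\bar q$ from the outset, and the structural identities $\bar\eta(u_b)=\bar q(u_b)=\bar\eta'(u_b)=0$ combined with the exact Dirichlet condition $u^\eps=u_b$ on $\Gamma$ from \eqref{eq:initial-boundary-viscous} annihilate \emph{every} boundary term produced by integration by parts, including the dangerous one $\eps\int_\Gamma \bar\eta'(u^\eps)\,\nabla_x u^\eps\cdot\hat n\,\varphi$; no boundary-layer analysis and no $\eta=\mathrm{Id}$ bootstrap are needed. This is the classical ``boundary entropy pair'' device (in the spirit of Otto), and what it buys is a one-pass argument whose only price is a smooth extension of $u_b$ into $Q$ (available here, since the data are taken smooth in this existence proof) together with the remainders $R^\eps$ and $\eps E^\eps$ generated by derivatives of that extension, which you correctly control by the maximum principle \eqref{max_principle} and Cauchy--Schwarz with \eqref{compact_eq_inequality}. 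Two shared caveats, at the same level of rigor as the paper: both proofs need the energy bound \eqref{compact_eq_inequality} to hold up to the boundary (the paper's derivation, with test functions compactly supported in $Q$, is strictly interior, so a genuine global energy estimate using the smooth boundary data should be invoked), and both need the layer-to-boundary passage via Theorem \ref{the_theorem1} for an integrand depending on $(t,x)$ through $u_b$ and $\hat n$ as well as on $u$, which requires the localization/Jacobian argument you sketch at the end; your ordering of limits ($\eps\to0$ before the layer width tends to $0$) is the same as the paper's and is the right one.
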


\begin{proof}
We need a family of boundary layer
functions $\{\zeta_\delta\} \in C^\infty(\Omega;[0,1])$ verifying
$$
\zeta_\delta|_{\Omega_\delta}=0, \quad
\zeta_\delta|_{\partial\Omega}=1, \quad
\mathrm{and}\quad
|\nabla\zeta|\le\frac{c}{\delta^d},
$$
where $\Omega_\delta=\{x\in\Omega|\mathrm{diam}(x,\partial\Omega)>\delta\}$
and $c$ is a constant independent of $\delta$.

Multiplying  \eqref{compact_approximation} by $\theta(t)\zeta_\delta(x)$
with $\theta\in C^\infty_c(0,T)$, $\theta\ge 0$, we obtain
$E_1=E_2$, where the terms $E_1,E_2$ are defined and analyzed below.

Integration by parts yields
\begin{equation*}
    \begin{split}
        E_1 & :=\int_Q \Bigl(\partial_t \eta(u^\eps)+\Div_x q(u^\eps)
        -\eta'(u^\eps)S^\eps(t,x,u^\eps)\Bigr)\theta(t)\zeta_\delta(x)\, dtdx
        \\ & = - \int_Q  \eta(u^\eps)\theta'(t)\zeta_\delta(x)
        +q(u^\eps)\cdot\nabla_x \zeta_\delta(x)\theta(t)
        \\ & \qquad\qquad + \eta'(u^\eps) S^\eps(t,x,u^\eps)
        \theta(t)\zeta_\delta(x)\, dtdx
        +\int_{(0,T)\times \partial\Omega} q(u_b)\cdot \hat{n}\, \theta(t)\, dtdx
        \\ & \overset{\eps\to 0}{\to}
        -\int_Q  \eta(u)\theta'(t)\zeta_\delta(x)
        +q(u)\cdot \nabla_x \zeta_\delta(x)\theta(t)
        +\eta'(u)S(t,x,u))
        \theta(t)\zeta_\delta(x) \, dtdx
        \\ & \qquad\qquad+\int_{(0,T)\times \partial\Omega}q(u_b)
        \cdot \hat{n} \, \theta(t)\, dtd\sigma.
    \end{split}
\end{equation*}
Observe that
$$
\int_Q q(u)\cdot\nabla_x \zeta_\delta(x)\theta(t)\, dtdx
\overset{\delta\to 0}{\to}
\int_0^T\int_{\partial\Omega}q(u^\tau)\cdot \hat{n}\, \theta(t)\, dtd\sigma
$$
and
$$
\int_Q\eta'(u)S(t,x,u)\theta(t)\zeta_\delta(x)\, dtdx
\overset{\delta\to 0}{\to} 0,\quad
\int_Q\eta(u)\theta'(t)\zeta_\delta(x)\,dtdx\overset{\delta\to0}{\to}
0.
$$
As a result,
\begin{equation*}
    \lim_{\delta\to 0}\lim_{\eps\to 0} E_1=
    \int_{(0,T)\times \partial\Omega}
    (q(u_b)-q(u^\tau))\cdot \hat{n})\theta(t)\, dtd\sigma.
\end{equation*}

Next,

\begin{equation*}
    \begin{split}
        E_2 & := \eps\int_Q\eta'(u^\eps)\Delta_x u^\eps\theta(t)\zeta_\delta(x)\, dtdx
        \\ & =\eps\int_Q \Bigl(\Div_x (\eta'(u^\eps)\nabla_x u^\eps)-\eta''(u^\eps)
        |\nabla_x u^\eps|^2\Bigr)\theta(t)\zeta_\delta(x)\, dtdx\\
        & \le\eps \int_{(0,t)\times\partial\Omega}\eta'(u_b)\nabla_x u^\eps\cdot \hat{n}\, \theta(t)\, dtd\sigma
        -\eps\int_Q \eta'(u^\eps)\nabla_x u^\eps\cdot\nabla_x \zeta_\delta(x)\theta(t)\, dtdx
        \\ & =: E_{2,1}-E_{2,2}.
    \end{split}
\end{equation*}

Clearly, thanks to \eqref{compact_eq_inequality},
$\displaystyle \lim_{\eps\to 0} \abs{E_{2,2}}=0$.

To analyze $E_{2,1}$, we repeat the above argument with
$\eta=\text{Id}$ to obtain the equation
$$
\lim_{\delta\to 0}\lim_{\eps \to 0}
\Biggl[\eps\int_{(0,T)\times \partial\Omega}\nabla_x u^\eps\cdot
\hat{n} \, \theta(t)\,dtd\sigma\Biggr]
=\int_{(0,T)\times \partial\Omega}(A(u^\tau)-A(u_b))
\cdot \hat{n}\,\theta(t)\, dtd\sigma,
$$
and consequently
$$
\lim_{\eps\to 0}\lim_{\delta \to 0} E_{2,1}=
\int_{(0,T)\times \partial\Omega}
\eta'(u_b)(A(u^\tau)-A(u_b))\cdot \hat{n}\,\theta(t)\, dtd\sigma;
$$
hence the limit $u$ obeys the inequality
$$
\int_0^T\int_{\partial\Omega}\Bigl[ q(u^\tau)-q(u_b)
-\eta'(u_b)(A(u^\tau)-A(u_b))\Bigr]\cdot \hat{n}\, \theta(t)\,d\sigma dt\geq0.
$$
By the arbitrariness of $\theta$, the proof is complete.
\end{proof}

\subsection{Uniqueness proof}
In this section we prove the uniqueness part of Theorem
\ref{the_theorem2}, adapting the approach of Perthame \cite{Perthame,Perthame:2002qy}.
In what follows, we let $u,v$ denote two entropy solutions of the
conservation law \eqref{eq_moment} with initial data $u_0,v_0\in L^\infty$, respectively, and
boundary data $u_b$, with the boundary condition \eqref{eq:boundary}
interpreted in the sense of \eqref{DL_boundary}.
We start by rewriting the Dubois and LeFloch boundary condition
\eqref{DL_boundary} in a kinetic form due to Kwon \cite{Kwon}.

\begin{lemm}\label{kinetic_form}
The following two statements are equivalent:

\noindent
1. For every convex entropy-entropy flux pair $(\eta,q)$,
$$
\Bigl[q(u^\tau)-q(u_b)-\eta'(u_b)(A(u^\tau)-A(u_b))\Bigr]\cdot\hat{n}\geq0
\quad \text{on $\Gamma$}.
$$
2. There exists $\mu\in \mathcal{M}^+(\Gamma\times(-L,L))$
such that
$$
A'(\xi)\cdot
\hat{n}(\xh)\bigl[f^\tau(\xh,\xi)-\chi(\xi;u_b(\xh))\bigr]
-\delta_{(\xi=u_b(\xh))}(A(u^\tau)-A(u_b))\cdot\hat{n}=-\partial_\xi\mu(\xh,\xi),
$$
for every $(\xh,\xi)\in\Gamma\times(-L,L)$.
\end{lemm}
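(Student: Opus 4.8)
The plan is to establish the two implications separately, treating (2)~$\Rightarrow$~(1) as a direct computation and reserving the real work for the construction of $\mu$ in (1)~$\Rightarrow$~(2). Throughout I would rely on two ingredients: first, that by Theorem~\ref{the_theorem1} the trace $f^\tau$ is a \chif, so $f^\tau(\xh,\xi)=\chi(u^\tau(\xh),\xi)$; and second, on the elementary identities $\int_{-L}^L A'(\xi)\eta'(\xi)\chi(v,\xi)\,d\xi=q(v)-q(0)$ and $\int_{-L}^L A'(\xi)\chi(v,\xi)\,d\xi=A(v)-A(0)$, valid for each $v\in(-L,L)$ and every entropy pair $(\eta,q)$ with $q'=A'\eta'$, which follow at once from the definition of $\chi$.

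For (2)~$\Rightarrow$~(1) I would multiply the kinetic identity by $\eta'(\xi)$ and integrate in $\xi$ over $(-L,L)$. By the two identities above, the transport terms collapse to $\hat{n}\cdot[q(u^\tau)-q(u_b)]$, while the Dirac mass contributes $-\eta'(u_b)(A(u^\tau)-A(u_b))\cdot\hat{n}$, so the left-hand side becomes exactly the Dubois--LeFloch expression. On the right-hand side, $-\int_{-L}^L\eta'(\xi)\partial_\xi\mu$ integrates by parts to $\int_{-L}^L\eta''(\xi)\,\mu(\xh,d\xi)$ (all objects being supported in $|\xi|<L$, the endpoint terms drop out), which is nonnegative because $\eta$ is convex and $\mu\ge0$. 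This is precisely statement (1).

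For the converse I would, for a.e.\ $\xh\in\Gamma$, view the left-hand side of the kinetic identity as a distribution in $\xi$,
\[ G(\xh,\xi):=A'(\xi)\cdot\hat{n}(\xh)\bigl[f^\tau(\xh,\xi)-\chi(\xi;u_b(\xh))\bigr]-\delta_{(\xi=u_b(\xh))}(A(u^\tau)-A(u_b))\cdot\hat{n}, \]
and then \emph{define} $\mu(\xh,\xi):=-\int_{-L}^\xi G(\xh,\zeta)\,d\zeta$, so that $-\partial_\xi\mu=G$ holds by construction. Integrating $G$ against the constant $1$ and using the second identity above gives $\int_{-L}^L G(\xh,\xi)\,d\xi=0$; this is exactly what is needed to ensure that $\mu$ vanishes at both endpoints $\xi=\pm L$, so that $\mu(\xh,\cdot)$ is a genuine bounded function of $\xi$ (continuous apart from a single jump at $\xi=u_b$). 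What remains is to show $\mu\ge0$.

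Proving $\mu\ge0$ is the \emph{crux} of the argument and the step I expect to be the main obstacle. Reversing the computation of the first implication, for every convex $C^2$ entropy $\eta$ one has $\int_{-L}^L\eta''(\xi)\mu(\xh,\xi)\,d\xi=\int_{-L}^L\eta'(\xi)G(\xh,\xi)\,d\xi$, whose right-hand side equals the Dubois--LeFloch quantity and is therefore $\ge0$ by hypothesis (1). Since $\eta''$ can be prescribed to be any nonnegative $\phi\in C_c^\infty(-L,L)$ (integrate $\phi$ twice to recover a convex $\eta$), I conclude $\int_{-L}^L\phi(\xi)\mu(\xh,\xi)\,d\xi\ge0$ for all such $\phi\ge0$, hence $\mu(\xh,\cdot)\ge0$ for a.e.\ $\xh$. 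The measure $\mu(\xh,\xi)\,d\xi\,d\sigma(\xh)$ then lies in $\mathcal{M}^+(\Gamma\times(-L,L))$, as required. The delicate points to watch are the justification of the endpoint-free integrations by parts (which hinges on the vanishing-integral condition just derived) and the measurability of $\xh\mapsto\mu(\xh,\cdot)$.
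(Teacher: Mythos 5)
First, a point you could not have known: the paper does not prove this lemma at all --- it is quoted as being ``due to Kwon \cite{Kwon}'' and used as a black box --- so there is no in-paper argument to compare against. Your strategy (pairing the kinetic identity with $\eta'$ and integrating by parts for (2)$\Rightarrow$(1); for (1)$\Rightarrow$(2), defining $\mu$ as the negative $\xi$-primitive of the kinetic expression, checking that its total $\xi$-integral vanishes, and obtaining nonnegativity by duality against arbitrary nonnegative $\phi=\eta''$) is the natural one and essentially the argument of the cited reference. Your (1)$\Rightarrow$(2) direction is complete and correct: the identities $\int_{-L}^{L}A'\eta'\chi(v,\cdot)\,d\xi=q(v)-q(0)$ and $\int_{-L}^{L}A'\chi(v,\cdot)\,d\xi=A(v)-A(0)$, the vanishing of $\int_{-L}^{L}G\,d\xi$, the resulting vanishing of $\mu$ at $\xi=\pm L$, and the recovery of $\mu\ge0$ from the arbitrariness of $\eta''$ are all sound.

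There is, however, one genuine soft spot, in (2)$\Rightarrow$(1). You dismiss the endpoint terms in the integration by parts on the grounds that ``all objects are supported in $|\xi|<L$'', but statement (2) hands you only a nonnegative measure satisfying $-\partial_\xi\mu=G$ in the distributional sense on the open interval $(-L,L)$; for a.e.\ $\hat z$ such a $\mu$ is determined only up to an additive constant $C(\hat z)\ge0$ in $\xi$, and nothing in (2) as literally written forces that constant to be zero. If $C>0$, the boundary terms contribute $-C\bigl(\eta'(L)-\eta'(-L)\bigr)\le0$, which has the wrong sign. Concretely: if the pair $(u^\tau,u_b)$ violates (1), then
\begin{equation*}
\mu(\hat z,\xi):=C-\int_{-L}^{\xi}G(\hat z,\zeta)\,d\zeta,
\end{equation*}
with $C$ large, is a bounded nonnegative function, hence an element of $\mathcal{M}^+(\Gamma\times(-L,L))$ solving the identity --- so the literal (2) holds while (1) fails. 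The equivalence is therefore true only under the normalization that $\mu(\hat z,\cdot)$ vanishes at $\xi=\pm L$ (equivalently, that the identity holds against $\xi$-test functions that need not vanish at the endpoints, e.g.\ with all quantities extended by zero to $\xi\in\R$). This is exactly the normalization your own construction in (1)$\Rightarrow$(2) produces; you should state it explicitly as part of the reading of statement (2), and invoke it --- rather than an unavailable support claim --- to kill the endpoint terms.
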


Associated with the entropy solutions $u$ and $v$ we introduce the corresponding
$\chi$-functions $f$ and $g$ defined by $f(t,x,\xi)=\chi(\xi;u(t,x))$ and
$g(t,x,\xi)=\chi(\xi;v(t,x))$, respectively. In view of Theorem \ref{theo_LPT}, there exist
$m^1,m^2\in\mathcal{M}^+(Q\times(-L,L))$ such that
\begin{equation}\label{kinetic_equation}
    \begin{split}
        & \partial_t f+A'(\xi)\cdot\nabla_x f
        -S(t,x,\xi)(\partial_\xi f-\delta(\xi)) =\partial_\xi m^1,\\
        & \partial_t g+A'(\xi)\cdot\nabla_x g
        -S(t,x,\xi)(\partial_\xi g-\delta(\xi))=\partial_\xi m^2.
    \end{split}
\end{equation}

The goal is to show the following inequality for a.e.~$t\in(0,T)$:
\begin{equation}\label{regular_equation}
    \begin{split}
        &\frac{d}{dt}\int_{\Omega}\int_{-L}^L|f(t,x,\xi)-g(t,x,\xi)|^2\, d\xi dx\\
        & \quad+\int_{\partial\Omega}
        \int_{-L}^L A'(\xi)\cdot \hat{n}(x)|f^\tau(t,x,\xi)-g^\tau(t,x,\xi)|^2\,d\xi d\sigma\\
        &
        \quad\quad\le C\int_{\Omega}|S(t,x,u(t,x))-S(t,x,v(t,x))|\,dx,
    \end{split}
\end{equation}
where $d\sigma$ denotes the volume element of $\partial\Omega$ and
some constants $C>0$.

To this end, we need to regularize $f$ and $g$ with
respect to the $t,x$ variables. Set
$\epsilon=(\epsilon_1,\epsilon_2)$ and
define $\phi_{\epsilon}$ by
$$
\phi_{\epsilon}(t,x)=\frac{1}{\epsilon_1}\phi_1\left(\frac{t}{\epsilon_1}\right)
\frac{1}{\epsilon_2^d}\phi_2\left(\frac{x}{\epsilon_2}\right),
$$
where $\phi_1\in C_c^\infty(\R)$, $\phi_2\in C_c^\infty(\R^d)$ verify
$\phi_j\geq0$, $\int\phi_j=1$ for $j=1,2$, and $\mathrm{supp}(\phi_1)\subset(0,1)$. We shall
employ the following notations:
\begin{align*}
    f_\epsilon(t,x,\xi)=f(\cdot,\cdot,\xi)\underset{(t,x)}{\star}\phi_\epsilon(t,x),&\quad
    g_\epsilon(t,x,\xi)=g(\cdot,\cdot,\xi)\underset{(t,x)}{\star}\phi_\epsilon(t,x),\\
    m_{\epsilon}^1(t,x,\xi)=m^1(\cdot,\cdot,\xi)\underset{(t,x)}{\star}\phi_\epsilon(t,x),&\quad
    m_{\epsilon}^2(t,x,\xi)=m^2(\cdot,\cdot,\xi)\underset{(t,x)}{\star}\phi_\epsilon(t,x),
\end{align*}
where $\star$ means convolution with respect to the indicated variables and
the mappings $f,g,m_1,m_2$ are extended to $\R^{d+1}$ by letting them take
the value zero on $\R^{d+1}\setminus Q$.

The proof of the following lemma can be
found in Perthame \cite{Perthame,Perthame:2002qy}.

\begin{lemm}\label{defect_measure}
Let $m^1$ and $m^2$ be non-negative measures given in the Theorem
\ref{theo_LPT}. Then, the following holds
\begin{equation*}
    \lim_{\epsilon\rightarrow0}\int_{-L}^L
    m_\epsilon^1(\cdot,\cdot,\xi)\delta_{(\xi=u)}\ast\phi_\epsilon
    +m_\epsilon^2(\cdot,\cdot,\xi)\delta_{(\xi=v)}\ast\phi_\epsilon d\xi=0
    \quad \text{in $\mathcal{D}'(Q)$}.
\end{equation*}
\end{lemm}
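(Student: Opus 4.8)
The plan is to recognize this as the standard vanishing-defect-measure lemma of the kinetic theory of scalar conservation laws and to reduce it to a commutator estimate. Testing against an arbitrary $0\le\psi\in C_c^\infty(Q)$ and using that $m^1,m^2\in\mathcal{M}^+(Q\times(-L,L))$ together with the nonnegativity of $\delta_{(\xi=u)}\ast\phi_\epsilon$ and $\delta_{(\xi=v)}\ast\phi_\epsilon$, it suffices to treat the two summands separately and to show that each nonnegative quantity $\int_Q\psi\int_{-L}^L m_\epsilon^1(\delta_{(\xi=u)}\ast\phi_\epsilon)\,d\xi\,dt\,dx$ (and its $g,v$ analogue) tends to $0$. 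Since the total masses of $m^1,m^2$ are finite by Theorem \ref{theo_LPT} and the entropy dissipation bound, both families are uniformly bounded, so the whole issue is to identify the limit as $0$.

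First I would rewrite the Dirac factor in terms of the regularized kinetic function. From the definition of $\chi$ one has $\partial_\xi\chi(u,\xi)=\delta_0(\xi)-\delta_{(\xi=u)}$, hence $\delta_{(\xi=u)}\ast\phi_\epsilon=\delta_0(\xi)-\partial_\xi f_\epsilon$, since $\phi_\epsilon$ acts only on the $(t,x)$ variables and $\int\phi_\epsilon=1$. Substituting this and integrating by parts in $\xi$ transfers the derivative onto $m_\epsilon^1$, producing the term $\int_{-L}^L (\partial_\xi m_\epsilon^1)\, f_\epsilon\,d\xi$. At this point I would insert the regularized kinetic identity obtained by convolving the first line of \eqref{kinetic_equation} with $\phi_\epsilon$, namely $\partial_\xi m_\epsilon^1=\partial_t f_\epsilon+A'(\xi)\cdot\nabla_x f_\epsilon-\bigl(S(\cdot,\xi)(\partial_\xi f-\delta(\xi))\bigr)\ast\phi_\epsilon$, so that the defect term is re-expressed entirely through the transport of $f_\epsilon$ and the bounded, Lipschitz source \eqref{eq:source-ass}, up to the commutator errors $[A'\cdot\nabla_x,\,\cdot\ast\phi_\epsilon]f$ and the analogous source commutator.

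The heart of the matter — and the step I expect to be the main obstacle — is that purely algebraic manipulation is circular: the quantity under study is a regularization of the ill-defined diagonal product $m^1\,\delta_{(\xi=u)}$, in which both factors are carried by the same graph $\{\xi=u(t,x)\}$, so one cannot conclude by passing to the weak limit in each factor separately (that just returns the term to itself). The genuine input is a commutator/continuity estimate: using that $f_\epsilon\to f$ strongly in $L^1_{\loc}$ (Lemma \ref{elementaire}, which moreover gives $f(z,\cdot)\in BV(-L,L)$), the transport commutators converge to $0$ in $L^1_{\loc}$ in the manner of DiPerna–Lions, while the finiteness of $m^1$ together with this strong convergence forces the surviving contribution to localize to a set of vanishing $m^1$-mass. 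The one-sided time mollifier, $\mathrm{supp}(\phi_1)\subset(0,1)$, is what fixes the sign of the leading commutator and prevents the needed cancellation from being destroyed, exactly as in the interior uniqueness argument.

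Finally I would combine these ingredients: the commutator terms vanish, the source terms are controlled by the Lipschitz bound in \eqref{eq:source-ass} and dominated convergence, and the nonnegativity of $m^1$ lets one squeeze the remaining defect between $0$ and a quantity of vanishing mass, yielding $\int_Q\psi\int_{-L}^L m_\epsilon^1(\delta_{(\xi=u)}\ast\phi_\epsilon)\,d\xi\to 0$; the same argument applied to $m^2,g,v$ closes the estimate. Since this is precisely the defect-measure lemma of Perthame \cite{Perthame,Perthame:2002qy}, in practice I would present the reduction above and then invoke his result for the commutator estimate rather than reproducing it in full.
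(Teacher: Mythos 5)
At the level of what is written, your proposal ends up where the paper does: the paper contains \emph{no} proof of Lemma \ref{defect_measure}, only the sentence that the proof ``can be found in Perthame \cite{Perthame,Perthame:2002qy}'', and your opening reductions (nonnegativity of each term, the identity $\delta_{(\xi=u)}\ast\phi_\epsilon=\delta(\xi)-\partial_\xi f_\epsilon$, insertion of the $(t,x)$-mollified kinetic equation \eqref{kinetic_equation}) are indeed the first moves of Perthame's own argument, so ``reduce and cite'' is per se consistent with the paper.

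However, your account of the step you delegate to Perthame misidentifies the mechanism, and the route you sketch would not close if you had to carry it out. (i) After the substitution you silently drop the term $\int_{-L}^{L}m^1_\epsilon\,\delta(\xi)\,d\xi$: this term is exactly as problematic as the original diagonal product and does not vanish on its own (think of a shock whose left and right states straddle $\xi=0$); disposing of it is part of the problem. (ii) The transport commutator $[A'(\xi)\cdot\nabla_x,\ \cdot\ast\phi_\epsilon]f$ you propose to estimate in the manner of DiPerna--Lions is \emph{identically zero}, because the mollification acts only in $(t,x)$ while the coefficients $(1,A'(\xi))$ are independent of $(t,x)$; there is nothing to estimate there and no information to extract from it. (iii) ``Finiteness of $m^1$ forces the surviving contribution to localize to a set of vanishing $m^1$-mass'' is an assertion, not an argument: nonnegativity plus finite mass cannot by themselves identify the limit of an ill-defined diagonal product. (iv) The one-sided support of $\phi_1$ makes the regularization at time $t$ depend only on values of $f$ at earlier times, which is what lets the initial trace be recovered in the Gronwall step; it does not ``fix the sign of a commutator''. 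The idea that actually runs the proof, and which is absent from your sketch, is the algebraic identity $\sgn(\xi)\,f=f^2$, valid because $f$ is a $\chi$-function. Multiplying the mollified equation \eqref{kinetic_equation} by $2f_\epsilon$, subtracting the mollification of the same equation premultiplied by $\sgn(\xi)$, and integrating in $\xi$, the $\delta(\xi)$-terms cancel algebraically (this removes the term dropped in (i)), one uses $\partial_\xi\bigl(\sgn(\xi)-2f_\epsilon\bigr)=2\,\delta_{(\xi=u)}\ast\phi_\epsilon$, and one arrives at
\begin{equation*}
    2\int_{-L}^{L}\bigl(\delta_{(\xi=u)}\ast\phi_\epsilon\bigr)\,m^1_\epsilon\,d\xi
    =-\,\partial_t\int_{-L}^{L}\bigl[(f^2)_\epsilon-(f_\epsilon)^2\bigr]\,d\xi
    -\Div_x\int_{-L}^{L}A'(\xi)\bigl[(f^2)_\epsilon-(f_\epsilon)^2\bigr]\,d\xi
    +\mathcal{R}_\epsilon,
\end{equation*}
where $\mathcal{R}_\epsilon$ collects the source contributions and one uses $\sgn(\xi)f_\epsilon=(\sgn(\xi)f)_\epsilon=(f^2)_\epsilon$. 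The only ``commutator'' that matters is therefore the nonlinear one, $0\le (f^2)_\epsilon-(f_\epsilon)^2$, which tends to $0$ in $L^1_{\mathrm{loc}}$ simply because mollifications of the fixed bounded function $f$ converge strongly (Lemma \ref{elementaire} is not needed for this); consequently the right-hand side tends to $0$ in $\mathcal{D}'(Q)$, and nonnegativity of the left-hand side concludes. One last point worth recording: since \eqref{kinetic_equation} carries the source $S$, the remainder $\mathcal{R}_\epsilon$ contains a further diagonal term $\int\bigl(\sgn(\xi)-2f_\epsilon\bigr)\bigl[S(\cdot,\cdot,u)\,\delta_{(\xi=u)}\bigr]\ast\phi_\epsilon\,d\xi$ that is absent from Perthame's source-free setting and needs the same kind of care (using \eqref{eq:source-ass}); this gap is not addressed by your citation --- nor, to be fair, by the paper's.
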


Let us continue with the proof of \eqref{regular_equation}.
Fix a $\partial\Omega$-regular deformation $\hat{\psi}$, and let $\Omega_s$ denote
the open subset of $\Omega$ whose boundary is
$\partial\Omega_s= \hat{\psi}(\{s\}\times\partial\Omega)$. Taking the convolution
of each of the two kinetic equations in \eqref{kinetic_equation} and then
subtracting the resulting equations we obtain an equation that is
multiplied by $f_\epsilon-g_\epsilon$. The final outcome reads
\begin{equation}\label{regular_equation2}
    \begin{split}
        & \int_{\Omega_s}\int_{-L}^L\partial_t|f_\epsilon(t,x,\xi)-g_\epsilon(t,x,\xi)|^2
        +A'(\xi)\cdot\nabla_x|f_\epsilon(t,x,\xi)-g_\epsilon(t,x,\xi)|^2\,d\xi d\sigma_s\\
        & \qquad-\int_{\Omega_s}\int_{-L}^L [S(t,x,\xi)(\partial_\xi(f-g))]
        \underset{(t,x)}{\star}\phi_\epsilon(t,x)(f_\epsilon(t,x,\xi)-g_\epsilon(t,x,\xi))\,d\xi d\sigma_s \\
        & =2\int_{\Omega_s}\int_{-L}^L\partial_\xi(m_\epsilon^1(t,x,\xi)-m_\epsilon^2(t,x,\xi))
        (f_\epsilon(t,x,\xi)-g_\epsilon(t,x,\xi))\, d\xi d\sigma_s,
    \end{split}
\end{equation}
for a.e.~$s>0$, where $d\sigma_s$ denotes the volume element of
$\partial\Omega_s$.

In view of Lemma \ref{defect_measure}, observe that for a.e.~$s>0$ we have
\begin{equation*}
    \begin{split}
        &\lim_{\epsilon\rightarrow0}\int_{\Omega_s}
        \int_{-L}^L\partial_\xi(m_\epsilon^1(\cdot,\cdot,\xi)-m_\epsilon^2(\cdot,\cdot,\xi))
        (f_\epsilon(\cdot,\cdot,\xi)-g_\epsilon(\cdot,\cdot,\xi))\,d\xi d\sigma_s\\
        & =-\lim_{\epsilon\rightarrow0}\int_{\Omega_s}\int_{-L}^L
        (m_\epsilon^1(\cdot,\cdot,\xi)-m_\epsilon^2(\cdot,\cdot,\xi))
        \partial_\xi(f_\epsilon(\cdot,\cdot,\xi)-g_\epsilon(\cdot,\cdot,\xi))\,d\xi d\sigma_s\\
        & =-\lim_{\epsilon\rightarrow0}\int_{\Omega_s}\int_{-L}^Lm_\epsilon^1(\cdot,\cdot,\xi)
        \delta_{(\xi=v)}\underset{(t,x)}{\star}\phi_\epsilon
        +m_\epsilon^2(\cdot,\cdot,\xi)\delta_{(\xi=u)}\underset{(t,x)}{\star}\phi_\epsilon\,d\xi d\sigma_s
        \le 0.
    \end{split}
\end{equation*}

Next, observe that
\begin{align*}
    & \limsup_{\epsilon\rightarrow0}|\int_{\Omega_s}\int_{-L}^L
    [S(t,x,\xi)(\partial_\xi(f-g))]\underset{(t,x)}{\star}
    \phi_\epsilon(t,x)(f_\epsilon(t,x,\xi)-g_\epsilon(t,x,\xi))\, d\xi dx|\\
    & \qquad \le 2\int_{\Omega_s}|S(t,x,u)-S(t,x,v)|\,dx
    \le  2C\int_{\Omega_s}|u-v|\,dx, \quad
    \text{for a.e.~$s>0$,}
\end{align*}
where we have used condition \eqref{eq:source-ass} to derive the
last inequality. Indeed, using $|f|\leq 1$ and $|g|\leq 1$, we 
obtain $|f_\epsilon-g_\epsilon|\leq2$ and we can easily check that for 
$a.e.~(t,x)\in (0,T)\times\Omega$,
$$
\int_{-L}^L
[S(t,x,\xi)(\partial_\xi(f-g))]\underset{(t,x)}{\star}
\phi_\epsilon(t,x) d\xi\overset{\epsilon\to 0}{\longrightarrow} 
S(t,x,v)-S(t,x,u),
$$
thanks to $\partial_\xi(f-g)=\delta(\xi=v)-\delta(\xi=u)$.

 Let us now apply the divergence theorem in
\eqref{regular_equation2} and subsequently take the limits
$\epsilon\to0$ and $s\to0$. Applying Theorem \ref{the_theorem1} and
the observations above, we obtain the following inequality for
a.e.~$t\in (0,T)$:
\begin{equation}\label{regular_equation1}
    \begin{split}
        & \int_{\Omega}\int_{-L}^L\partial_t|f(t,x,\xi)-g(t,x,\xi)|^2\, d\xi dx\\
        & \quad+\int_{\partial\Omega}\int_{-L}^L A'(\xi)\cdot \hat{n}(x)\,
        |f^\tau(t,x,\xi)-g^\tau(t,x,\xi)|^2\, d\xi d\sigma(x)\\
        & \quad\quad \le2\int_{\Omega}|S(t,x,u)-S(t,x,v)|\,dx.
    \end{split}
\end{equation}

Next, we show that the ``boundary'' part of
\eqref{regular_equation1} is non-negative. According to Lemma
\ref{kinetic_form}, there exist two measures $\mu_f, \mu_g\in
\mathcal{M}^+(\Gamma\times(-L,L))$ corresponding to $f$ and $g$,
respectively, verifying
\begin{equation}\label{kinetic_measure}
    \begin{split}
        & A'(\xi)\cdot \hat{n}(\xh)\bigl[f^\tau(\xh,\xi)-\chi(\xi;u_b(\xh))\bigr]
        -\delta_{(\xi=u_b(\xh))}(A(u^\tau(\xh))-A(u_b(\xh)))\cdot\hat{n}\\
        & \qquad =-\partial_\xi \mu_f(\xh,\xi), \\
        & A'(\xi)\cdot \hat{n}(\xh)\bigl[g^\tau(\xh,\xi)-\chi(\xi;u_b(\xh))\bigr]
        -\delta_{(\xi=u_b(\xh))}(A(v^\tau(\xh))-A(u_b(\xh)))\cdot\hat{n}\\
        & \qquad =-\partial_\xi \mu_g(\xh,\xi),
    \end{split}
\end{equation}
for $(\xh,\xi)\in\Gamma\times(-L,L)$.

For later use, observe that
\begin{equation}\label{sign_kinetic}
    \begin{split}
        &A'\cdot \hat{n}\,  |f^\tau-g^\tau|^2\\
        & \quad =A'\cdot \hat{n}\, (f^\tau-\chi(\xi;u_b))\sgn(\xi-u_b)
        -2A'\cdot \hat{n}\, (f^\tau-\chi(\xi;u_b))(g^\tau-\chi(\xi;u_b))\\
        & \qquad\quad +A'\cdot \hat{n}\, (g^\tau-\chi(\xi;u_b))\sgn(\xi-u_b) \\
        & \quad =A'\cdot \hat{n}\, (f^\tau-\chi(\xi;u_b))[\sgn(\xi-u_b)-g^\tau+\chi(\xi;u_b)]\\
        & \qquad\quad
        +A'\cdot \hat{n}\, (g^\tau-\chi(\xi;u_b))[\sgn(\xi-u_b)-f^\tau+\chi(\xi;u_b)] \\
        & \quad =: A'\cdot \hat{n}\, (f^\tau-\chi(\xi;u_b)) \alpha(\xi)
        +A'\cdot \hat{n}\, (g^\tau-\chi(\xi;u_b))\beta(\xi),
    \end{split}
\end{equation}
where $\sgn(\cdot)$ denotes the usual sign function with $\sgn(0)=0$.
Combining \eqref{kinetic_measure} and \eqref{sign_kinetic}, along with integration by parts, gives
\begin{equation}\label{last_equality}
    \begin{split}
        & \int_{\partial\Omega}\int_{-L}^L A'(\xi)\cdot \hat{n}(\xh)\,
        |f^\tau(\xh,\xi)-g^\tau(\xh,\xi)|^2\, d\xi d\sigma\\
        & =\int_{\partial\Omega}\int_{-L}^L A'(\xi)\cdot \hat{n}(\xh)\,
        [f^\tau(\xh,\xi)-\chi(\xi;u_b(\xh))]\alpha(\xi)\, d\xi d\sigma\\
        & \qquad +\int_{\partial\Omega}\int_{-L}^L A'(\xi)\cdot \hat{n}(\xh)\,
        [g^\tau(\xh,\xi)-\chi(\xi;u_b(\xh))]\beta(\xi)\, d\xi d\sigma\\
        & =\int_{\partial\Omega}\left(\int_{-L}^{u_b}+\int_{u_b}^{L}\right)
        \bigl[-\partial_\xi\mu_f(\xh,\xi)\alpha(\xi)-\partial_\xi\mu_g(\xh,\xi)\beta(\xi)\bigr]\, d\xi d\sigma\\
        & =\int_{\partial\Omega}\int_{-L}^{u_b}
        \mu_f(\xh,\xi)\, \nu_g\, d\xi d\sigma- \mu_f(u_b^{-})\alpha(u_b^{-})\\
        & \qquad+\int_{\partial\Omega}\int_{u_b}^{L}\mu_f(\xh,\xi)\, \nu_g\,d\xi d\sigma
        + \mu_f(u_b^{+})\alpha(u_b^{+})\\
        & \qquad+\int_{\partial\Omega}\int_{-L}^{u_b}\mu_g(\xh,\xi)\, \nu_f\, d\xi d\sigma
        - \mu_g(u_b^{-})\beta(u_b^{-})\\
        & \qquad
        +\int_{\partial\Omega}\int_{u_b}^{L}\mu_g(\xh,\xi)\, \nu_f \, d\xi d\sigma
        + \mu_g(u_b^{+})\beta(u_b^{+}),
    \end{split}
\end{equation}
where $\nu_f,\nu_g$ are non-negative measures defined by the
relations $\partial_\xi f^\tau=\delta(\xi)-\nu_f$ and $\partial_\xi
g^\tau=\delta(\xi)-\nu_g$, respectively.
Notice that $\alpha(u_b^{+})\geq0$, $\beta(u_b^{+})\geq0$, and
$\alpha(u_b^{-})\le0$, $\beta(u_b^{-})\le0$. Thus,
\eqref{last_equality} is non-negative.

Let us now conclude the proof of Theorem \ref{the_theorem2}. Since 
the second and third terms in \eqref{regular_equation1} are
non-negative, Gronwall's inequality imply that for each fixed $s\in (0,t)$
\begin{equation*}
    \int_{\Omega}\int_{-L}^L|f(t,x,\xi)-g(t,x,\xi)|^2\,d\xi dx
    \le\exp(2CT)\int_{\Omega}\int_{-L}^L|f(s,x,\xi)-g(s,x,\xi)|^2\,d\xi dx,
\end{equation*}
where $C$ is given in \eqref{eq:source-ass}.

 Therefore, in view of Theorem \ref{the_theorem1}, we can let
$s\to 0$ to obtain
$$
\int_\Omega|u(t,x)-v(t,x)|\,dx
\le\exp(2CT)\int_\Omega|u_0(x)-v_0(x)|\,dx,
\quad \text{for a.e.~$t\in (0,T)$.}
$$
This concludes the proof of Theorem
\ref{the_theorem2}.

\section{IBVP for the Degasperis-Procesi equation}
\label{sec:DP}

The purpose of this section is to prove Theorem \ref{the_theorem3}.
The main step of the proof relates to the existence of an entropy solution.
Our existence argument is based passing to the limit
in a vanishing viscosity approximation of \eqref{eq:DPw}.

Fix a small number $\eps>0$, and let $\ue=\ue (t,x)$ be the unique
classical solution of the following mixed problem \cite{CHK:ParEll}:
\begin{equation}\label{eq:DPepsw}
    \begin{cases}
        \pt \ue+\ue \px \ue+\px\Pe= \eps\pxx\ue,&\quad (t,x)\in (0,T)\times(0,1),\\
        -\pxx\Pe+\Pe=\frac{3}{2}u_\eps^2,&\quad (t,x)\in (0,T)\times(0,1),\\
        \ue(0,x)=u_{\eps,0}(x),&\quad x\in (0,1),\\
        \ue(t,0)=g_{\eps,0}(t),\>\>\> \ue(t,1)=g_{\eps,1}(t),&\quad t\in(0,T),\\
        \px \Pe(t,0)=\psi_{\eps,0}(t),\>\>\>  \px\Pe(t,1)=\psi_{\eps,1}(t),&\quad t\in(0,T),
    \end{cases}
\end{equation}
where $u_{\eps,0},\,g_{\eps,0},\,g_{\eps,1}$ are $C^\infty$ approximations of
$u_{0},\,g_{0},\,g_{1}$, respectively, such that
\begin{equation*}
    g_{\eps,0}(0)=u_{\eps,0}(0),\qquad g_{\eps,1}(0)=u_{\eps,0}(1),
\end{equation*}
and
\begin{equation}\label{eq:boundaryPeps}
    \psi_{\eps,0}=-g_{\eps,0}'-g_{\eps,0}h_{\eps,0},\qquad
    \psi_{\eps,1}=-g_{\eps,1}'-g_{\eps,1}h_{\eps,1}.
\end{equation}
Due to  \eqref{eq:boundaryPeps} and the first equation in \eqref{eq:DPepsw}, we have that
\begin{equation}\label{eq:u_xxeps}
    \pxx \ue(t,0)=\pxx\ue(t,1)=0,\qquad t\in(0,T).
\end{equation}

For our own convenience let us convert \eqref{eq:DPepsw} into a problem with
homogeneous boundary conditions. To this end, we introduce
the following notations:
\begin{equation}\label{eq:omega}
    \begin{split}
        \ome(t,x)= xg_{\eps,1}(t)+(1-x)g_{\eps,0}(t),&
        \qquad \ve=\ue-\ome,\\
        \Ome(t,x)=\frac{x^2}{2}\psi_{\eps,1}(t)+\frac{2x-x^2}{2}
        \psi_{\eps,0}(t),&\qquad \Ve=\Pe-\Ome.
    \end{split}
\end{equation}
Thanks to
\begin{align*}
    \ome(t,0)=g_{\eps,0}(t),&\qquad \ome(t,1)=g_{\eps,1}(t),\qquad t\in(0,T),\\
    \px\Ome(t,0)=\psi_{\eps,0}(t),&\qquad\px\Ome(t,1)=\psi_{\eps,1}(t),\qquad t\in(0,T),
\end{align*}
we have that
\begin{equation}\label{eq:boundaryv}
    \ve(t,0)=\ve(t,1)=\px\Ve(t,0)=\px\Ve(t,1)=0,\qquad t\in(0,T).
    \end{equation}
Moreover, due to the definition of $\ome$ and \eqref{eq:u_xxeps}
\begin{equation}\label{eq:v_xxeps}
    \pxx\ome(t,x)=\pxxx\Ome(t,x)=\pxx\ve(t,1)=\pxx\ve(t,0)=0,
\end{equation}
for each $t\in (0,T)$ and $x\in (0,1).$

Finally, in view of \eqref{eq:DPepsw} and \eqref{eq:v_xxeps}, we obtain
\begin{align}\label{eq:DPepsweakv}
    &\pt \ve+\pt\ome+ \ue \px\ue+\px\Pe= \eps\pxx\ve,\\
    \label{eq:DPepsweakP}
    &-\pxx\Ve+\Ve=\frac{3}{2}\ue^2+\pxx\Ome-\Ome.
\end{align}

We are now ready to state and prove our key estimate.
\begin{lemm}\label{lm:energyestimate}
For each $t\in(0,T)$,
\begin{equation}\label{eq:energyestimate}
    \begin{split}
        & \norm{\ve(t,\cdot)}_{L^2(0,1)}^2
        +2\eps e^{2\alpha_\eps(t)}\int_0^t e^{-2\alpha_\eps(s)}
        \norm{\px \ve(s,\cdot)}^2_{L^2(0,1)}\,ds\\
        &\qquad \le 4\norm{\ve(0,\cdot)}^2_{L^2(0,1)}e^{2\alpha_\eps(t)}
        +8e^{2\alpha_\eps(t)}\int_0^t e^{-2\alpha_\eps(s)}\beta_\eps(s)\,ds,
    \end{split}
\end{equation}
where
\begin{align}\label{eq:alpha}
    \alpha_\eps(t) & = C_0\left( t+\int_0^t\left(|g_{\eps,0}(s)|+|g_{\eps,1}(s)|\right)ds\right),\\
    \label{eq:beta}
    \beta_\eps(t)&=C_0\Biggl(|g_{0,\eps}'(t)|^2+|g_{1,\eps}'(t)|^2 \\ &
    \qquad\qquad +|h_{0,\eps}(t)g_{0,\eps}(t)|^2+|h_{1,\eps}(t)g_{1,\eps}(t)|^2 \notag \\ &
    \qquad\qquad\qquad +|g_{0,\eps}(t)|^3+|g_{1,\eps}(t)|^3\Biggr),\notag
\end{align}
and $C_0>0$ is a positive  constant independent on $\eps$.

In particular, the families
\begin{align*}
    \{\ue\}_{\eps>0}, \qquad \{\sqrt{\eps}\px\ue\}_{\eps>0}
\end{align*}
are bounded in $L^\infty(0,T;L^2(0,1))$ and $L^2((0,T)\times(0,1))$, respectively.
\end{lemm}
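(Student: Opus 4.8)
The plan is to derive the energy estimate \eqref{eq:energyestimate} by testing the viscous equation \eqref{eq:DPepsweakv} against $\ve$ and integrating over $(0,1)$ in space. First I would multiply \eqref{eq:DPepsweakv} by $\ve$, integrate over $(0,1)$, and integrate the viscosity term $\eps\pxx\ve$ by parts; thanks to the homogeneous boundary condition $\ve(t,0)=\ve(t,1)=0$ from \eqref{eq:boundaryv}, the boundary contributions from this term vanish and one is left with $-\eps\norm{\px\ve(t,\cdot)}_{L^2(0,1)}^2$, which will produce the dissipation term on the left-hand side of \eqref{eq:energyestimate}. This yields an identity of the schematic form
\begin{equation*}
    \frac{1}{2}\frac{d}{dt}\norm{\ve(t,\cdot)}_{L^2(0,1)}^2
    +\eps\norm{\px\ve(t,\cdot)}_{L^2(0,1)}^2
    =-\int_0^1\bigl(\pt\ome+\ue\px\ue+\px\Pe\bigr)\ve\,dx.
\end{equation*}

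Next I would estimate each term on the right. The convective term $\int_0^1 \ue\px\ue\,\ve\,dx$ I would rewrite using $\ue=\ve+\ome$ and the identity $\ue\px\ue=\px(\ue^2/2)$, integrating by parts so that the cubic-in-$\ve$ contribution $\int_0^1 \px(\ve^2/2)\,\ve\,dx$ either vanishes by the boundary conditions or reduces to lower-order boundary terms; the remaining pieces involving $\ome$ and $\px\ome$ are controlled since $\ome$ is affine in $x$ with coefficients $g_{\eps,0},g_{\eps,1}$ and their time derivatives. The pressure term $\int_0^1\px\Pe\,\ve\,dx$ I would bound by invoking the elliptic regularization \eqref{eq:DPsmooth}, controlling $\norm{\px\Pe}$ in terms of $\norm{\ue}_{L^2}^2$ via the $P$-equation in \eqref{eq:DPepsw}; this is where the quadratic dependence on $\ue$ feeds back into a term controllable by $\norm{\ve}_{L^2}^2$ plus boundary data. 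Collecting everything, the right-hand side should be dominated by $C_0(1+|g_{\eps,0}|+|g_{\eps,1}|)\norm{\ve}_{L^2}^2+\beta_\eps(t)$ for the $\alpha_\eps,\beta_\eps$ defined in \eqref{eq:alpha}--\eqref{eq:beta}, after which Gronwall's inequality applied to $\norm{\ve(t,\cdot)}_{L^2}^2$ with integrating factor $e^{-2\alpha_\eps(t)}$ produces \eqref{eq:energyestimate}.

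The main obstacle, I expect, is the careful bookkeeping of the boundary terms and the forcing $\pt\ome$. Because $\ome$ and $\Ome$ are explicitly affine/quadratic in $x$ built from the boundary data, the term $\pt\ome=x g_{\eps,1}'+(1-x)g_{\eps,0}'$ must be paired with $\ve$ and estimated by Young's inequality, which is precisely what generates the $|g_{\eps,0}'|^2,|g_{\eps,1}'|^2$ contributions in $\beta_\eps$; similarly the products $h_{\eps,j}g_{\eps,j}$ and the cubic terms $|g_{\eps,j}|^3$ in \eqref{eq:beta} must emerge from the convective and pressure estimates involving $\ome$. One has to track that all constants are independent of $\eps$, using that $u_{\eps,0},g_{\eps,j},h_{\eps,j}$ are uniformly bounded approximations of the data satisfying \eqref{eq:ass}. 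Finally, the boundedness of $\{\ue\}$ in $L^\infty(0,T;L^2)$ and $\{\sqrt{\eps}\px\ue\}$ in $L^2((0,T)\times(0,1))$ follows by combining \eqref{eq:energyestimate} with $\ue=\ve+\ome$ and the uniform bounds on $\ome,\px\ue$ coming from $\ome$ being controlled by the data; since $\alpha_\eps(t)$ and the integral of $\beta_\eps$ are uniformly bounded over $\eps>0$ and $t\in(0,T)$ by virtue of \eqref{eq:ass}, the right-hand side of \eqref{eq:energyestimate} is bounded independently of $\eps$, yielding the stated compactness-type uniform bounds.
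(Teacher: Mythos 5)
Your overall plan (test the equation, estimate term by term, close with Gronwall) has the right shape, but the central step fails: the pressure term cannot be bounded by $\norm{\ve}_{L^2}^2$ plus data when you test with $\ve$ itself. From the elliptic equation $-\pxx\Pe+\Pe=\tfrac32\ue^2$ with Neumann data, the best available bound is of the form
\begin{equation*}
    \norm{\px\Pe(t,\cdot)}_{L^\infty(0,1)}
    \le C\bigl(\norm{\ue(t,\cdot)}_{L^2(0,1)}^2+|\psi_{\eps,0}(t)|+|\psi_{\eps,1}(t)|\bigr),
\end{equation*}
so that $\bigl|\int_0^1\px\Pe\,\ve\,dx\bigr|\le C\bigl(\norm{\ve}_{L^2}^2+\text{data}\bigr)\norm{\ve}_{L^2}$, which is \emph{cubic}, not quadratic, in $\norm{\ve}_{L^2}$. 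The resulting differential inequality $y'\le Cy^{3/2}+\dots$ for $y=\norm{\ve(t,\cdot)}_{L^2}^2$ is of Riccati type and does not close under Gronwall: it yields at best a bound on a small time interval, not the estimate \eqref{eq:energyestimate} on all of $(0,T)$ with constants depending only on the data. Integrating by parts instead ($-\int_0^1\Pe\,\px\ve\,dx$) forces you to absorb into the viscosity and produces an $\eps^{-1}$ constant, which is just as useless. Note also that you cannot shortcut this by invoking \eqref{eq:DPsmooth} through an $L^\infty$ bound on $\ue$: the uniform $L^\infty$ bound (Lemma \ref{lm:invariance4}) is derived later from Lemma \ref{lm:boundP}, which itself rests on the present energy estimate, so that route is circular. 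Your treatment of the convective term, by contrast, is fine: its purely cubic part $\int_0^1\ve^2\px\ve\,dx$ is a total derivative and vanishes.

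The missing idea---and the heart of the paper's proof---is the choice of multiplier. Unlike the Camassa--Holm equation, the Degasperis--Procesi equation has no $L^2$ energy law; the conserved quantity equivalent to $\norm{\ve}_{L^2}^2$ is $\int_0^1\ve(\te-\pxx\te)\,dx$, where $\te$ solves $-\pxx\te+4\te=\ve$ with $\te(t,0)=\te(t,1)=0$. The paper therefore multiplies \eqref{eq:DPepsweakv} by $\te-\pxx\te$ rather than by $\ve$. With this multiplier the pressure term can be integrated by parts and, via the elliptic equation \eqref{eq:DPepsweakP} for $\Ve$, becomes $\int_0^1 3\ue\px\ue\,\te\,dx$ plus data terms; added to the convective term $\int_0^1\ue\px\ue\,(\te-\pxx\te)\,dx$ it reconstitutes exactly $\int_0^1\ue\px\ue\,\ve\,dx$ (since $4\te-\pxx\te=\ve$), whose cubic part is again a perfect derivative. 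What survives is $\tfrac12\int_0^1\ue^2\,\px\ome\,dx$ plus boundary terms, i.e.\ a quadratic term whose coefficient $|g_{\eps,0}(t)|+|g_{\eps,1}(t)|$ is precisely what generates $\alpha_\eps$ in \eqref{eq:alpha}, together with lower-order data terms generating $\beta_\eps$ in \eqref{eq:beta}. Gronwall is then applied to the equivalent energy $\norm{\te(t,\cdot)}_{\widetilde H^2(0,1)}^2\simeq\norm{\ve(t,\cdot)}_{L^2(0,1)}^2$, and the dissipation $\eps\norm{\px\te}_{\widetilde H^2}^2$ controls $\eps\norm{\px\ve}_{L^2}^2$. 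Without this structural cancellation, which exploits the Hamiltonian structure of the equation, your scheme cannot produce \eqref{eq:energyestimate}.
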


\begin{proof}
Following \cite{Coclite:2005cr} we introduce the quantity $\te=\te(t,x)$ solving
the following elliptic problem:
\begin{equation}\label{eq:teta}
	\begin{cases}
		-\pxx\te+4\te=\ve(t,x),& x\in(0,1),\\
		\te(t,0)=\te(t,1)=0,& t\in(0,T).
	\end{cases}
\end{equation}
Our motivation for bringing in \eqref{eq:teta} comes 
from the fact that, in the case of homogeneous
boundary conditions, the quantity
\begin{equation*}
	\int_0^1\ve(\te-\pxx\te)\,dx
\end{equation*}
is conserved by \eqref{eq:DP} when $\eps=0$ (see \cite{DP:99}).
Thanks to \eqref{eq:teta} we have
\begin{equation}\label{eq:tetav}
	\begin{split}
		\norm{\te(t,\cdot)}_{H^2(0,1)} & \le \norm{\ve(t,\cdot)}_{L^2(0,1)}
		\le 4 \norm{\te(t,\cdot)}_{H^2(0,1)},\\
		\norm{\px\te(t,\cdot)}_{H^2(0,1)} & \le \norm{\px\ve(t,\cdot)}_{L^2(0,1)}
		\le 4 \norm{\px\te(t,\cdot)}_{H^2(0,1)}.
	\end{split}
\end{equation}
Indeed, squaring both sides of \eqref{eq:teta}
\begin{equation*}
	\ve^2=(\pxx\te)^2-8\te\px\te+16\te^2
\end{equation*}
and integrating over $(0,1)$
\begin{align*}
	\int_0^1\ve^2dx=&\int_0^1\Big[(\pxx\te)^2+8(\px\te)^2+16\te^2\Big]\,dx
	+8\left[\te\px\te\right]_0^1\\
	& =\int_0^1\Big[(\pxx\te)^2+8(\px\te)^2+16\te^2\Big]\,dx.
\end{align*}
Since
\begin{align*}
	\int_0^1\Big[(\pxx\te)^2+(\px\te)^2+\te^2\Big]\,dx
	& \le \int_0^1\Big[(\pxx\te)^2+8(\px\te)^2+16\te^2\Big]\,dx
	\\ & \le 16 \int_0^1\Big[(\pxx\te)^2+(\px\te)^2+\te^2\Big]\,dx,
\end{align*}
we have the first  line of \eqref{eq:tetav}. 
For the second line in  \eqref{eq:tetav}, since 
$$
\pxx\te(t,0)=\pxx\te(t,1)=0 \quad \text{(cf.~\eqref{eq:boundaryv})}, 
$$
we can argue in the same way.

We multiply \eqref{eq:DPepsweakv} by $\te-\pxx\te$ and then integrate
the result over $(0,1)$, obtaining
\begin{equation}\label{eq:enest-1}
	\begin{split}
		&\underbrace{\int_0^1\pt \ve(\te-\pxx\te)\,dx}_{A_1}
		+\underbrace{\int_0^1\pt \ome(\te-\pxx\te)\,dx}_{A_2}
		\\ &\qquad\qquad
		+\underbrace{\int_0^1\ue\px\ue(\te-\pxx\te)\,dx}_{A_3}
		+\underbrace{\int_0^1\px\Pe(\te-\pxx\te)\,dx}_{A_4}\\ &
		\qquad\qquad\qquad\qquad
		=\eps\underbrace{\int_0^1\pxx\ve(\te-\pxx\te)\,dx}_{A_5}.
	\end{split}
\end{equation}

Thanks to \eqref{eq:boundaryv} and \eqref{eq:teta},
\begin{equation}\label{eq:A1}
    \begin{split}
        A_1& =\int_0^1\pt (4\te-\pxx\te) (\te-\pxx\te)\,dx\\
        &=\int_0^1\left(4\pt\te\te-4\pt\te\pxx\te-\ptxx\te\te+\ptxx\te\pxx\te\right)\,dx\\
        &=\int_0^1\left(4\pt\te\te+5\ptx\te\px\te+\ptxx\te\pxx\te\right)\,dx
        -\left[4\pt\te\px\te+\ptx\te\te\right]_0^1\\
        &=\frac12\frac{d}{dt}\int_0^1\left(4\te^2+5(\px\te)^2+(\pxx\te)^2\right)\, dx
        = \frac12\frac{d}{dt}\norm{\te(t,\cdot)}_{\widetilde H^2(0,1)}^2,
    \end{split}
\end{equation}
where
\begin{equation*}
	\norm{f}_{\widetilde H^2(0,1)}=\sqrt{4\norm{f}_{L^2(0,1)}^2
	+5\norm{f'}_{L^2(0,1)}^2+\norm{f''}_{L^2(0,1)}^2}.
\end{equation*}

The H\"older inequality, \eqref{eq:ass}, and \eqref{eq:omega} guarantee that
\begin{equation}\label{eq:A2}
    \begin{split}
        A_2 & \le \int_0^1(\pt\ome)^2\,dx+\frac{1}{2}\int_0^1\te^2\,dx+\frac{1}{2}\int(\pxx\te)^2\,dx\\
        & \le 2\left(|g_{0,\eps}'(t)|^2+|g_{1,\eps}'(t)|^2\right)
        +\frac{1}{2}\norm{\te(t,\cdot)}_{\widetilde H^2(0,1)}^2.
    \end{split}
\end{equation}

In light of \eqref{eq:omega}, \eqref{eq:boundaryv},  and \eqref{eq:DPepsweakP},
\begin{align*}
    A_4 & = \int_0^1\left(\px\Ve\te-\px\Ve\pxx\te+\px\Ome\te-\px\Ome\pxx\te\right)\,dx\\
    & =\int_0^1\left(\px\Ve\te+\pxx\Ve\px\te+\px\Ome\te-\px\Ome\pxx\te\right)\,dx
    -\left[\px\Ve\px\te\right]_0^1\\
    & =\int_0^1\left(\px(\Ve-\pxx\Ve)\te+\px\Ome\te-\px\Ome\pxx\te\right)\,dx
    +\left[\pxx\Ve\te\right]_0^1\\
    &=\int_0^1\left(3\ue\px\ue \te-\px\Ome\pxx\te\right)\,dx.
\end{align*}
Therefore
\begin{equation}\label{eq:A34}
    \begin{split}
        A_3+A_4&=\int_0^1\left(\ue\px\ue (4\te-\pxx\te)-\px\Ome\pxx\te\right)\, dx\\
        &= \int_0^1\left(\ue\px\ue \ve-\px\Ome\pxx\te\right)\, dx\\
        & =\int_0^1\left(\ue^2\px\ue -\ue\px\ue \ome-\px\Ome\pxx\te\right)\,dx\\
        &=\int_0^1\left(\frac{\ue^2}{2}\px\ome-\px\Ome\pxx\te\right)\,dx
        +\left[\frac{\ue^3}{3}-\frac{\ue^2}{2}\ome\right]_0^1\\
        & \le \frac{|g_{0,\eps}(t)|+|g_{1,\eps}(t)|}{2}\int_0^1\ue^2\,dx
        +\frac12\int_0^1(\pxx\te)^2\,dx\\
        &\qquad +\frac12\int_0^1(\px\Ome)^2\,dx
        +\frac{|g_{0,\eps}(t)|^3+|g_{1,\eps}(t)|^3}{6}\\
        & \le c_1\Bigl(|g_{0,\eps}(t)|+|g_{1,\eps}(t)|+1\Bigr)
        \norm{\te(t,\cdot)}_{\widetilde H^2(0,1)}^2\\
        &\qquad+c_1\Bigl(|\psi_{0,\eps}(t)|^2+|\psi_{1,\eps}(t)|^2
        +|g_{0,\eps}(t)|^3+|g_{1,\eps}(t)|^3\Bigr)\\
        & \le c_1\Bigl(|g_{0,\eps}(t)|+|g_{1,\eps}(t)|+1\Bigr)
        \norm{\te(t,\cdot)}_{\widetilde H^2(0,1)}^2\\
        & \qquad+c_1\Biggl(|g_{0,\eps}'(t)|^2+|g_{1,\eps}'(t)|^2\\
        & \qquad\qquad\qquad
        + |h_{0,\eps}(t)g_{0,\eps}(t)|^2+|h_{1,\eps}(t)g_{1,\eps}(t)|^2\\
        & \qquad\qquad\qquad\qquad
        +|g_{0,\eps}(t)|^3+|g_{1,\eps}(t)|^3\Biggr),
    \end{split}
\end{equation}
for some constant $c_1>0$ that is independent on $\eps$.

By observing that \eqref{eq:boundaryv} and \eqref{eq:teta} furnish
\begin{equation*}
    \pxx\te(t,0)=\pxx\te(t,1)=0,\qquad t\in(0,T),
\end{equation*}
we achieve
\begin{equation}\label{eq:A5}
    \begin{split}
        A_5&=\eps\int_0^1\pxx (4\te-\pxx\te) (\te-\pxx\te)\,dx\\
        &=\eps\int_0^1\left(4\pxx\te\te-4(\pxx\te)^2-\pxxxx\te\te+\pxxxx\te\pxx\te\right)\,dx\\
        &=\eps\int_0^1\left(-4(\px\te)^2-4(\pxx\te)^2+\pxxx\te\px\te-(\pxxx\te)^2\right)\,dx\\
        &\qquad\qquad+\eps\left[4\px\te\te-\pxxx\te\te+\pxxx\te\pxx\te\right]_0^1\\
        &=-\eps\int_0^1\left(4(\px\te)^2+5(\pxx\te)^2+(\pxxx\te)^2\right)\,dx
        +\eps\left[\pxx\te\px\te\right]_0^1\\
        &=-\eps\norm{\px\te(t,\cdot)}_{\widetilde H^2(0,1)}^2.
    \end{split}
\end{equation}

In view of \eqref{eq:A1}, \eqref{eq:A2}, \eqref{eq:A34}, and \eqref{eq:A5}, it follows from
\eqref{eq:enest-1} that
\begin{equation}\label{eq:enest-2}
    \begin{split}
        & \frac{d}{dt} \norm{\te(t,\cdot)}_{\widetilde H^2(0,1)}^2
        +2\eps\norm{\px\te(t,\cdot)}_{\widetilde H^2(0,1)}^2\\
        & \qquad\le c_2\left(|g_{0,\eps}(t)|+|g_{1,\eps}(t)|+1\right)
        \norm{\te(t,\cdot)}_{\widetilde H^2(0,1)}^2\\
        &\qquad\qquad
        +c_2\Biggl(|g_{0,\eps}'(t)|^2+|g_{1,\eps}'(t)|^2
        \\ &\qquad\qquad\qquad\qquad +|h_{0,\eps}(t)g_{0,\eps}(t)|^2
        +|h_{1,\eps}(t)g_{1,\eps}(t)|^2
        \\ &\qquad\qquad\qquad\qquad\qquad
        +|g_{0,\eps}(t)|^3+|g_{1,\eps}(t)|^3\Biggr),
    \end{split}
\end{equation}
for some constant $c_2>0$ that is independent on $\eps$.

Using the notations introduced
in \eqref{eq:alpha} and \eqref{eq:beta}, inequality \eqref{eq:enest-2} becomes
\begin{equation*}
    \frac{d}{dt}\norm{\te(t,\cdot)}_{\widetilde H^2(0,1)}^2
    +2\eps\norm{\px\te(t,\cdot)}_{\widetilde H^2(0,1)}^2
    \le \alpha_\eps'(t)\norm{\te(t,\cdot)}_{\widetilde H^2(0,1)}^2
    +\beta_\eps(t),
\end{equation*}
and hence, thanks to the Gronwall lemma,
\begin{equation}\label{eq:enest-4}
    \begin{split}
        & \norm{\te(t,\cdot)}_{\widetilde H^2(0,1)}^2
        +2\eps e^{\alpha_\eps(t)}\int_0^t e^{-\alpha_\eps(s)}
        \norm{\px \te(s,\cdot)}^2_{\widetilde H^2(0,1)}\,ds
        \\ & \qquad\le \norm{\te(0,\cdot)}^2_{\widetilde H^2(0,1)}e^{\alpha_\eps(t)}
        +2e^{\alpha_\eps(t)}\int_0^t e^{-\alpha_\eps(s)}\beta_\eps(s)\,ds.
    \end{split}
\end{equation}
Clearly, via \eqref{eq:tetav},  the desired claim \eqref{eq:energyestimate}
follows from \eqref{eq:enest-4}.

The boundedness of the families $\{\ue\}_{\eps>0},\,\{\px\ue\}_{\eps>0}$
follows from the definition of the auxiliary variable
$\ve$ in \eqref{eq:omega} and assumption \eqref{eq:ass}.
\end{proof}

We continue with some a priori bounds that
come directly from the energy estimate
stated in Lemma \ref{lm:energyestimate}.

\begin{lemm}\label{lm:boundP}
The families $\{\Ve\}_{\eps>0}$, $\{\Pe\}_{\eps>0}$ are
both bounded in
$$
L^{\infty}(0,T;W^{2,1}(0,1)) \cap L^{\infty}(0,T;W^{1,\infty}(0,1)),
$$
In particular, these families are bounded in $L^\infty((0,T)\times(0,1))$.
\end{lemm}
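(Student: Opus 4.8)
The plan is to treat the elliptic equation for $\Ve$ (and then for $\Pe$) pointwise in $t$ as a two-point boundary value problem in $x$, and to exploit that in one space dimension an $L^1$ right-hand side already produces a Lipschitz solution. By \eqref{eq:DPepsweakP} the function $\Ve$ solves the homogeneous Neumann problem
\begin{equation*}
    -\pxx\Ve+\Ve=\frac32\ue^2+\pxx\Ome-\Ome,\qquad \px\Ve(t,0)=\px\Ve(t,1)=0.
\end{equation*}
Lemma \ref{lm:energyestimate} furnishes a uniform bound on $\{\ue\}$ in $L^\infty(0,T;L^2(0,1))$, whence $\{\ue^2\}$ is bounded in $L^\infty(0,T;L^1(0,1))$. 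The remaining terms $\pxx\Ome-\Ome$ are, by the explicit quadratic form of $\Ome$ in \eqref{eq:omega}, determined by the $x$-independent factors $\psi_{\eps,0},\psi_{\eps,1}$ and are thus controlled through \eqref{eq:boundaryPeps} and \eqref{eq:ass}. Calling the whole right-hand side $h_\eps$, I first obtain a uniform bound on $\norm{h_\eps(t,\cdot)}_{L^1(0,1)}$.

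Next comes the elliptic estimate, which is the crux. Let $G_N=G_N(x,y)$ be the Green's function of $1-\pxx$ on $(0,1)$ with homogeneous Neumann conditions, so that $\Ve(t,\cdot)=\int_0^1 G_N(\cdot,y)\,h_\eps(t,y)\,dy$. One builds $G_N$ explicitly from $\cosh$ and $\sinh$ and checks that it is bounded on $[0,1]^2$ together with its partial derivative $\px G_N$, which carries only a bounded jump across $x=y$. Consequently
\begin{equation*}
    \norm{\Ve(t,\cdot)}_{L^\infty(0,1)}+\norm{\px\Ve(t,\cdot)}_{L^\infty(0,1)}
    \le C\,\norm{h_\eps(t,\cdot)}_{L^1(0,1)},
\end{equation*}
which gives the $L^\infty(0,T;W^{1,\infty}(0,1))$ bound for $\Ve$. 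The second-order bound is then immediate: reading $\pxx\Ve=\Ve-h_\eps$ off the equation and using $\Ve\in L^\infty\subset L^1$ yields $\norm{\pxx\Ve(t,\cdot)}_{L^1(0,1)}\le C\norm{h_\eps(t,\cdot)}_{L^1(0,1)}$, hence the $L^\infty(0,T;W^{2,1}(0,1))$ bound.

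Finally I would transfer the estimates to $\Pe=\Ve+\Ome$. Since $\Ome$ is an explicit quadratic polynomial in $x$ with coefficients $\psi_{\eps,0}(t),\psi_{\eps,1}(t)$, the quantities $\Ome$, $\px\Ome$, $\pxx\Ome$ contribute only boundary-data terms controlled as above, so $\{\Pe\}$ inherits the same $L^\infty(0,T;W^{2,1}\cap W^{1,\infty})$ bound. The final ``in particular'' assertion is then just the one-dimensional embedding $W^{1,\infty}(0,1)\hookrightarrow L^\infty(0,1)$, applied uniformly in $t$.

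I expect the main obstacle to be the Green's-function estimate, namely proving that an $L^1$ source produces a uniformly Lipschitz solution. This is genuinely a one-dimensional phenomenon: it rests on $W^{1,1}(0,1)\hookrightarrow L^\infty(0,1)$ and on the boundedness of $\px G_N$, whereas in higher dimensions an $L^1$ datum would not give a $W^{1,\infty}$ solution. A secondary point requiring care is the time regularity of the boundary contributions $\psi_{\eps,i}$, which enter through $\Ome$ and must be tracked uniformly in $\eps$ by means of \eqref{eq:boundaryPeps} and \eqref{eq:ass}.
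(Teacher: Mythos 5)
Your proposal is correct and follows essentially the same route as the paper's own proof: both treat the equation for $\Ve$ pointwise in $t$ as a homogeneous Neumann problem, represent $\Ve(t,\cdot)$ via the explicit Green's function of $1-\pxx$ (bounded together with its $x$-derivative), bound the source $\frac32\ue^2+\pxx\Ome-\Ome$ in $L^1(0,1)$ using Lemma \ref{lm:energyestimate} and the boundary-data assumptions, recover $\pxx\Ve$ in $L^1$ directly from the equation, and pass to $\Pe=\Ve+\Ome$ at the end. The only differences are presentational (your explicit mention of the jump of $\px G$ across $x=y$ and of the embedding $W^{1,\infty}(0,1)\hookrightarrow L^\infty(0,1)$), so there is nothing to add.
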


\begin{proof}
To simplify the notation, let us introduce the quantity
\begin{equation*}
    f_\eps=\frac{3}{2}\ue^2+\pxx\Ome-\Ome.
\end{equation*}
From  \eqref{eq:boundaryv} and \eqref{eq:DPepsweakP},
\begin{equation*}
    -\pxx\Ve+\Ve=f_\eps,\qquad \px\Ve(t,0)=\px\Ve(t,1)=0.
\end{equation*}

Using the function
\begin{equation*}
    G(x,y)=
    \begin{cases}
        \frac{e^x+e^{-x}}{2}\frac{e^{y-1}+e^{1-y}}{e-e^{-1}},&\quad \text{if $0\le x\le y\le 1$,}\\
        \frac{e^y+e^{-y}}{2}\frac{e^{x-1}+e^{1-x}}{e-e^{-1}},&\quad \text{if $0\le y\le x\le 1$,}
    \end{cases}
\end{equation*}
which is the Green's function of the operator $1-\pxx$ on $(0,1)$ with homogenous
Neumann boundary conditions at $x=0,1$, we have the formulas
\begin{equation}
\label{eq:GMC}
\Ve(t,x)=\int_0^1 G(x,y)f_\eps(t,y)\,dy, \qquad
\px\Ve(t,x) =\int_0^1  \px G(x,y)f_\eps(t,y)\,dy.
\end{equation}
Since $G\ge 0$ and $G,\, \px G\in L^\infty((0,1)\times(0,1))$, we can
estimate as follows:
\begin{align*}
    & |\Ve(t,x)|\le\int_0^1 G(x,y)|f_\eps(t,y)|\,dy
    \le \norm{G}_{L^\infty((0,1)^2)}\norm{f(t,\cdot)}_{L^1(0,1)},\\
    & |\px\Ve(t,x)|\le\int_0^1 |\px G(x,y)||f_\eps(t,y)|\,dy
    \le \norm{\px G}_{L^\infty((0,1)^2)}
    \norm{f(t,\cdot)}_{L^1(0,1)},\\ &
    \norm{\pxx\Ve(t,\cdot)}_{L^1(0,1)}
    \le \norm{\Ve(t,\cdot)}_{L^1(0,1)}
    + \norm{f_\eps(t,\cdot)}_{L^1(0,1)}.
\end{align*}
Thanks to Lemma \ref{lm:energyestimate}, we conclude that the
desired bounds on $\{\Ve\}_{\eps>0}$ hold.

Finally, the bounds on $\{\Pe\}_{\eps>0}$ follow from the
bounds on $\{\Ve\}_{\eps>0}$ and \eqref{eq:ass}.
\end{proof}

Using the previous lemma we can bound $\ue$ and $\ve$
in $L^\infty$ (cf.~\cite[Lemma 4]{CK:DPUMI}).

\begin{lemm}\label{lm:invariance4}
For every $t\in(0,T)$,
\begin{equation*}
    \norm{\ue(t,\cdot)}_{L^\infty(0,1)}
    \le\norm{u_0}_{L^\infty(0,1)}
    +\norm{g_0}_{L^\infty(0,T)}
    +\norm{g_1}_{L^\infty(0,T)}+C_Tt,
\end{equation*}
for some constant $C_T>0$ depending on $T$ but not on $\eps$.
\end{lemm}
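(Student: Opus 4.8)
The plan is to obtain the $L^\infty$ bound through a comparison (maximum principle) argument for the viscous problem \eqref{eq:DPepsw}. The crucial input is Lemma \ref{lm:boundP}: it guarantees that
$$
M:=\sup_{t\in(0,T)}\norm{\px\Pe(t,\cdot)}_{L^\infty(0,1)}
$$
is finite and \emph{independent of $\eps$}. Rewriting the first equation of \eqref{eq:DPepsw} as
$$
\pt\ue+\ue\px\ue-\eps\pxx\ue=-\px\Pe,
$$
I view $-\px\Pe$ as a forcing term bounded in modulus by $M$, and I regard $\ue$ as the (smooth, hence bounded on $[0,T]\times[0,1]$ for each fixed $\eps$) coefficient multiplying $\px\ue$, so that $\pt+\ue\px-\eps\pxx$ is a uniformly parabolic operator.

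First I would introduce the barrier
$$
\overline u(t)=\norm{u_0}_{L^\infty(0,1)}+\norm{g_0}_{L^\infty(0,T)}+\norm{g_1}_{L^\infty(0,T)}+Mt,
$$
and set $w=\ue-\overline u$. Since $\overline u'(t)=M$ and $-\px\Pe\le M$, a direct computation gives
$$
\pt w+\ue\px w-\eps\pxx w=-M-\px\Pe\le 0,
$$
so $w$ is a subsolution. As $\ue$ is a classical solution for each fixed $\eps>0$ (cf.~\cite{CHK:ParEll}), the coefficient is bounded and the weak maximum principle applies: the maximum of $w$ over $[0,T]\times[0,1]$ is attained on the parabolic boundary $(\{0\}\times(0,1))\cup((0,T)\times\{0,1\})$. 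Choosing the approximations $u_{\eps,0},g_{\eps,0},g_{\eps,1}$ to be mollifications, which do not increase the sup-norm, one has $\norm{u_{\eps,0}}_{L^\infty}\le\norm{u_0}_{L^\infty}$ and likewise for the boundary data, whence $w(0,\cdot)\le0$, $w(t,0)=g_{\eps,0}(t)-\overline u(t)\le0$, and $w(t,1)=g_{\eps,1}(t)-\overline u(t)\le0$. The maximum principle then yields $w\le0$, that is, $\ue\le\overline u(t)$.

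For the matching lower bound I would argue symmetrically with the \emph{same} parabolic operator: setting $w=\ue+\overline u$ one finds
$$
\pt w+\ue\px w-\eps\pxx w=M-\px\Pe\ge 0,
$$
so $w$ is a supersolution, and the minimum principle together with the boundary estimates $w(0,\cdot)\ge0$, $w(t,0)\ge0$, $w(t,1)\ge0$ forces $w\ge0$, i.e.\ $\ue\ge-\overline u(t)$. Combining the two inequalities gives exactly the claimed bound with $C_T=M$. I stress that the sign of the convective coefficient $\ue$ is irrelevant to the maximum and minimum principles, so the lower bound really does follow from the same operator rather than from a naive reflection $\ue\mapsto-\ue$ (which would flip the Burgers nonlinearity).

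The only genuine subtlety, rather than an obstacle, is verifying that the constant $M$ and the data bounds are independent of $\eps$, so that $C_T=M$ depends on $T$ alone. For $M$ this is precisely the content of Lemma \ref{lm:boundP}, whose $W^{1,\infty}$-bound on $\Pe$ rests on the $\eps$-uniform $L^2$ energy estimate of Lemma \ref{lm:energyestimate} and not on any $L^\infty$ control of $\ue$; hence there is no circularity. For the data it suffices to fix once and for all sup-norm-non-increasing approximations. With these uniformities in hand the proof is complete.
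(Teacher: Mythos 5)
Your proposal is correct and follows essentially the same route as the paper: both bound $\px\Pe$ uniformly in $\eps$ via Lemma \ref{lm:boundP} and then compare $\ue$ with the linear-in-time barrier $\norm{u_0}_{L^\infty}+\norm{g_0}_{L^\infty}+\norm{g_1}_{L^\infty}+C_Tt$ through the parabolic comparison principle. You merely spell out details the paper leaves implicit (the explicit sub/supersolution computation, the lower bound, and the sup-norm-non-increasing choice of the approximate data), which is harmless.
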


\begin{proof}
Due to \eqref{eq:DPepsw} and Lemma \ref{lm:boundP},
$$
\pt \ue +\ue\px\ue-\eps\pxx \ue
\le \sup\limits_{\eps>0}\norm{\px \Pe}_{L^\infty((0,T)\times(0,1))}\le C_T.
$$
Since the map
$$
f(t):=\norm{u_0}_{L^\infty(0,1)}+\norm{g_0}_{L^\infty(0,T)}
+\norm{g_1}_{L^\infty(0,T)}+C_Tt,\qquad t\in(0,T),
$$
solves the equation
$$
\frac{df}{dt}=C_T
$$
and
$$
\ue(0,x), g_0(t), g_1(t)\le f(t),\qquad (t,x)\in (0,T)\times(0,1),
$$
the comparison principle for parabolic equations implies that
$$
\ue(t,x)\le f(t),\qquad (t,x)\in (0,T)\times(0,1).
$$
This concludes the proof of the lemma.
\end{proof}

As a consequence of  Lemmas \ref{lm:energyestimate} and \ref{lm:boundP},
the second equation in \eqref{eq:DPepsw} yields
\begin{lemm}\label{lm:P_xxinfty}
The families $\{\Ve\}_{\eps>0}$, $\{\Pe\}_{\eps>0}$ are
bounded in $L^{\infty}(0,T;W^{2,\infty}(0,1))$.
\end{lemm}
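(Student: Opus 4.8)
The plan is to obtain the missing second-order control by reading $\pxx\Pe$ directly off the elliptic equation in \eqref{eq:DPepsw} and bounding it pointwise, rather than through any further integral estimate. Since Lemma~\ref{lm:boundP} already provides uniform bounds in $L^\infty(0,T;W^{1,\infty}(0,1))$, the only quantity left to control is $\pxx\Pe$ (respectively $\pxx\Ve$) in $L^\infty((0,T)\times(0,1))=L^\infty(0,T;L^\infty(0,1))$.

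First I would rewrite the second equation of \eqref{eq:DPepsw} as
\begin{equation*}
    \pxx\Pe=\Pe-\frac32\ue^2 .
\end{equation*}
By Lemma~\ref{lm:boundP} the family $\{\Pe\}_{\eps>0}$ is bounded in $L^\infty((0,T)\times(0,1))$, and by Lemma~\ref{lm:invariance4} the family $\{\ue\}_{\eps>0}$ enjoys a bound in $L^\infty((0,T)\times(0,1))$ that is uniform in $\eps$; hence $\{\ue^2\}_{\eps>0}$ is bounded there as well. The displayed identity then shows that $\{\pxx\Pe\}_{\eps>0}$ is bounded in $L^\infty((0,T)\times(0,1))$, and combining this with the $W^{1,\infty}$ bound of Lemma~\ref{lm:boundP} yields the asserted $L^\infty(0,T;W^{2,\infty}(0,1))$ bound for $\{\Pe\}_{\eps>0}$.

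To transfer the conclusion to $\{\Ve\}_{\eps>0}$ I would use the relation $\Ve=\Pe-\Ome$ from \eqref{eq:omega}. The corrector $\Ome$ is explicitly quadratic in $x$, so $\pxx\Ome(t,\cdot)=\psi_{\eps,1}(t)-\psi_{\eps,0}(t)$ is constant in $x$; since Lemma~\ref{lm:boundP} already controls $\Pe$ and $\Ve$, hence $\Ome=\Pe-\Ve$, in $L^\infty(0,T;W^{1,\infty}(0,1))$, the boundary values $\psi_{\eps,0}(t)=\px\Ome(t,0)$ and $\psi_{\eps,1}(t)=\px\Ome(t,1)$ are bounded in $L^\infty(0,T)$ uniformly in $\eps$, whence $\pxx\Ome\in L^\infty((0,T)\times(0,1))$ and $\Ome\in L^\infty(0,T;W^{2,\infty}(0,1))$. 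Subtracting therefore gives the same bound for $\Ve=\Pe-\Ome$.

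There is no serious obstacle: the statement is an immediate algebraic consequence of the elliptic equation once the relevant $L^\infty$ bounds are in hand. The one point demanding care is that squaring $\ue$ forces me to invoke the \emph{pointwise} bound of Lemma~\ref{lm:invariance4} rather than the $L^2$ energy estimate of Lemma~\ref{lm:energyestimate}; the latter alone would only give $\ue^2\in L^\infty(0,T;L^1(0,1))$, which is insufficient for an $L^\infty$ bound on $\pxx\Pe$.
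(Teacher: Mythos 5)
Correct, and essentially the paper's own argument: the paper proves this lemma in one line by reading $\pxx\Pe=\Pe-\tfrac32\ue^2$ off the second equation in \eqref{eq:DPepsw}, exactly as you do, and your transfer to $\Ve$ via the quadratic corrector $\Ome$ (whose second derivative $\psi_{\eps,1}-\psi_{\eps,0}$ you control through Lemma \ref{lm:boundP}) merely fills in a detail the paper leaves implicit. Your closing caveat is also well taken: the paper nominally cites Lemma \ref{lm:energyestimate}, but the ingredient actually required is the uniform pointwise bound on $\ue$ from Lemma \ref{lm:invariance4}, since the energy estimate alone controls $\ue^2$ only in $L^\infty(0,T;L^1(0,1))$.
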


Let us continue by proving the existence of  a distributional solution
to  \eqref{eq:DP}, \eqref{eq:init}, \eqref{eq:DPboundary}  satisfying \eqref{eq:DPentropy}.

\begin{lemm}\label{lm:conv}
There exists a function $u\in L^\infty((0,T)\times(0,1))$ that is a distributional
solution of \eqref{eq:DPw} and satisfies  \eqref{eq:DPentropy} in the sense of
distributions for every convex entropy $\eta\in C^2(\R)$.
\end{lemm}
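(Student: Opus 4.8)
The plan is to pass to the limit $\eps\to0$ in the vanishing-viscosity approximation \eqref{eq:DPepsw}, exploiting the uniform bounds already collected. By Lemma \ref{lm:invariance4} the family $\{\ue\}_{\eps>0}$ is bounded in $L^\infty((0,T)\times(0,1))$, while Lemma \ref{lm:P_xxinfty} furnishes a uniform bound for $\{\Pe\}_{\eps>0}$ in $L^\infty(0,T;W^{2,\infty}(0,1))$; in particular $\{\px\Pe\}_{\eps>0}$ is bounded in $L^\infty$. Hence, up to a subsequence, $\ue\overset{\star}{\rightharpoonup}u$ in $L^\infty$ and $\Pe\overset{\star}{\rightharpoonup}P$. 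The crux is to upgrade the weak-$\star$ convergence of $\ue$ to strong (a.e.)~convergence, so that the quadratic nonlinearity $\ue\px\ue=\tfrac12\px(\ue^2)$ and the elliptic right-hand side $\tfrac32\ue^2$ pass to the limit.

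To this end I would first record the viscous entropy identity obtained by multiplying the first equation of \eqref{eq:DPepsw} by $\eta'(\ue)$, for convex $\eta\in C^2$ with $q'(\ue)=\ue\eta'(\ue)$:
\begin{equation*}
    \pt\eta(\ue)+\px q(\ue)+\eta'(\ue)\px\Pe
    =\eps\pxx\eta(\ue)-\eps\eta''(\ue)(\px\ue)^2 .
\end{equation*}
I then claim that $\pt\eta(\ue)+\px q(\ue)$ is precompact in $\Hneg((0,T)\times(0,1))$. Indeed $\eta(\ue),q(\ue)$ are bounded in $L^\infty$, so the left-hand side is bounded in $W^{-1,\infty}_{\loc}$; the term $\eps\pxx\eta(\ue)$ tends to $0$ in $\Hneg$ because $\sqrt{\eps}\,\px\ue$ is bounded in $L^2$ by Lemma \ref{lm:energyestimate}, whence $\eps\px\eta(\ue)=\sqrt{\eps}\,\eta'(\ue)\,\sqrt{\eps}\,\px\ue\to0$ in $L^2$; the defect term $\eps\eta''(\ue)(\px\ue)^2$ is bounded in $L^1_{\loc}$, hence in the space of Radon measures; and $\eta'(\ue)\px\Pe$ is bounded in $L^\infty\subset\mathcal M_{\loc}$. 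Murat's lemma then yields the asserted $\Hneg$-compactness.

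Since the flux $A(\xi)=\xi^2/2$ is genuinely nonlinear and verifies \eqref{hypothesis}, I would next invoke Tartar's compensated compactness method \cite{Tartar:1983ul}, exactly as in Section \ref{subsec:proof-existence}, to conclude that $\ue\to u$ a.e.~and in $L^p_{\loc}$ for every $p<\infty$. Consequently $\ue^2\to u^2$ in $L^1_{\loc}$, so the Green's-function representation of $\Pe$ identifies the limit as $P=P^u$ in the sense of \eqref{eq:def-Pu} and gives $\px\Pe\to\px P^u$ strongly; passing to the limit in the distributional formulation of \eqref{eq:DPepsw} shows that $u$ solves \eqref{eq:DPw} in $\mathcal D'$. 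Finally, letting $\eps\to0$ in the viscous entropy identity and discarding the nonpositive defect $-\eps\eta''(\ue)(\px\ue)^2\le0$ produces the entropy inequality \eqref{eq:DPentropy}. The main obstacle is the $\Hneg$-compactness step together with the correct handling of the nonlocal source $\px\Pe$: one must use its uniform $L^\infty$ bound so that it enters Murat's lemma as a bounded measure, and then its strong convergence so that $\eta'(\ue)\px\Pe\to\eta'(u)\px P^u$ in $\mathcal D'$ once $\ue\to u$ a.e. This is precisely where the regularizing effect of the elliptic equation, quantified in Lemmas \ref{lm:boundP}--\ref{lm:P_xxinfty}, is indispensable.
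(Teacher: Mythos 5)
Your proposal is correct and follows essentially the same route as the paper: the viscous entropy identity, the splitting into a part vanishing in $\Hneg$ (via the $\sqrt{\eps}\,\px\ue$ bound of Lemma \ref{lm:energyestimate}) plus a part bounded in measures, Murat's lemma and Tartar's compensated compactness to get a.e.\ convergence of $\ue$, the Green's-function identification $\Pe\to P^u$, and finally passing to the limit in the weak formulation and discarding the nonpositive dissipation term to obtain \eqref{eq:DPentropy}. The only cosmetic difference is that you state the $W^{-1,\infty}_{\loc}$ boundedness hypothesis of Murat's lemma explicitly, which the paper leaves implicit.
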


We  construct a solution by passing
to the limit in a sequence $\Set{u_{\eps}}_{\eps>0}$ of viscosity
approximations \eqref{eq:DPepsw}. We use the
compensated compactness method \cite{TartarI}.

\begin{lemm}\label{lm:convul1l4}
There exists a subsequence
$\{\uek\}_{k\in\N}$ of $\{\ue\}_{\eps>0}$
and a limit function $  u\in L^\infty((0,T)\times(0,1))$
such that
\begin{equation}\label{eq:convu}
    \textrm{$\uek \to u$ a.e.~and in $L^p((0,T)\times(0,1))$, $1\le p<\infty$}.
\end{equation}
\end{lemm}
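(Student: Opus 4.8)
The plan is to establish strong precompactness of $\{\ue\}$ by Tartar's compensated compactness method, exactly as for the Cauchy problem treated in \cite{Coclite:2005cr,CK:DPUMI}. By Lemma \ref{lm:invariance4} the family $\{\ue\}$ is bounded in $L^\infty((0,T)\times(0,1))$, so along a subsequence it generates a Young measure $\nu_{t,x}$, and the whole argument reduces to showing that $\nu_{t,x}$ is a Dirac mass for a.e.~$(t,x)$.

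First I would fix an arbitrary convex $\eta\in C^2(\R)$ with associated flux $q$ defined by $q'(\xi)=\xi\eta'(\xi)$ and derive, by multiplying the first equation in \eqref{eq:DPepsw} by $\eta'(\ue)$, the entropy balance
\[
  \pt\eta(\ue)+\px q(\ue)
  =-\eta'(\ue)\px\Pe+\eps\pxx\eta(\ue)-\eps\eta''(\ue)(\px\ue)^2.
\]
The key step is to show that the left-hand side is precompact in $\Hneg((0,T)\times(0,1))$. To that end I would split the right-hand side into two groups. The first two terms are controlled in $\Hneg$: the term $-\eta'(\ue)\px\Pe$ is bounded in $L^\infty$, hence in $L^2_{\loc}$, hence precompact in $\Hneg$ by Rellich, thanks to the $L^\infty$-bound on $\px\Pe$ furnished by Lemma \ref{lm:P_xxinfty}; while $\eps\pxx\eta(\ue)=\px(\eps\eta'(\ue)\px\ue)$ converges to $0$ strongly in $\Hneg$, because $\eps\eta'(\ue)\px\ue=\sqrt\eps\,\eta'(\ue)\cdot\sqrt\eps\,\px\ue\to0$ in $L^2$ by Lemma \ref{lm:energyestimate}. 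The remaining term $-\eps\eta''(\ue)(\px\ue)^2$ is only bounded in $L^1_{\loc}$, using $\eta''\in L^\infty$ together with the $L^2$-bound on $\sqrt\eps\,\px\ue$, and hence is bounded in $\mathcal{M}_{\loc}$.

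Since $\eta(\ue)$ and $q(\ue)$ are bounded in $L^\infty$, the left-hand side is bounded in $W^{-1,\infty}_{\loc}$; being the sum of a part precompact in $\Hneg$ and a part bounded in the space of measures, Murat's lemma then yields that $\pt\eta(\ue)+\px q(\ue)$ is precompact in $\Hneg$ for every such $\eta$. With this in hand I would apply the div--curl lemma to the fields $(\eta_1(\ue),q_1(\ue))$ and $(q_2(\ue),-\eta_2(\ue))$ associated with two entropy pairs, obtaining Tartar's commutation relation $\langle\nu,\eta_1q_2-\eta_2q_1\rangle=\langle\nu,\eta_1\rangle\langle\nu,q_2\rangle-\langle\nu,\eta_2\rangle\langle\nu,q_1\rangle$. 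Because the flux $A(\xi)=\xi^2/2$ is genuinely nonlinear ($A''\equiv1$), the classical Tartar--DiPerna reduction forces $\nu_{t,x}=\delta_{u(t,x)}$ for a.e.~$(t,x)$, where $u(t,x)=\langle\nu_{t,x},\xi\rangle\in L^\infty$. This gives $\uek\to u$ in measure, hence a.e.~along a further subsequence, and the uniform $L^\infty$-bound upgrades this to convergence in every $L^p$, $1\le p<\infty$, by dominated convergence.

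The main obstacle is the $\Hneg$-precompactness of the entropy production: the viscous dissipation term $\eps\eta''(\ue)(\px\ue)^2$ is genuinely only an $L^1$ (measure) quantity and cannot be handled in $\Hneg$ directly. It is precisely the interpolation furnished by Murat's lemma --- bounded in $W^{-1,\infty}_{\loc}$, equal to a piece compact in $\Hneg$ plus a piece bounded in $\mathcal{M}_{\loc}$ --- that closes this step. The subsequent reduction of the Young measure to a Dirac mass is then standard for genuinely nonlinear one-dimensional fluxes and proceeds exactly as in \cite{Coclite:2005cr,CK:DPUMI}; note that the compactness is purely interior, so the boundary data play no role in the argument.
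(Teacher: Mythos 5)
Your proposal is correct and follows essentially the same route as the paper: the same entropy balance obtained by multiplying \eqref{eq:DPepsw} by $\eta'(\ue)$, the same decomposition of the entropy production into an $\Hneg$-compact part and a measure-bounded part (the paper merely groups $\eta'(\ue)\px\Pe$ with the $L^1$-bounded terms rather than invoking Rellich for it), Murat's lemma, and then Tartar's compensated compactness framework. The only difference is that you spell out the Young-measure/div--curl reduction to a Dirac mass, which the paper compresses into a citation of \cite{TartarI}.
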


\begin{proof}
Let $\eta:\R\to\R$ be any convex $C^2$ entropy function, and let
$q:\R\to\R$ be the corresponding entropy
flux defined by $q'(u)=\eta'(u)\, u$.
By multiplying the first equation in \eqref{eq:DPepsw} with
$\eta'(\ue)$ and using the chain rule, we get
\begin{equation*}
    \pt  \eta(\ue)+\px q(\ue)
    =\underbrace{\eps \pxx \eta(\ue)}_{=:\CLea^1}
    \, \underbrace{-\eps \eta''(\ue) \left(\px  \ue\right)^2
    +\eta'(\ue) \px \Pe}_{=: \CLea^2},
\end{equation*}
where  $\CLea^1$, $\CLea^2$ are distributions. By
Lemmas \ref{lm:energyestimate}, \ref{lm:boundP}, \ref{lm:invariance4}, and \ref{lm:P_xxinfty},
\begin{equation}\label{eq:conclaim1-new}
    \begin{split}
        &\textrm{$\CLea^1\to 0$ in $H^{-1}((0,T)\times(0,1))$},\\
        &\textrm{$\CLea^2$ is uniformly bounded in $L^1((0,T)\times(0,1))$.}
    \end{split}
\end{equation}
Therefore, Murat's lemma \cite{Murat:Hneg} implies that
\begin{equation}\label{eq:GMC1}
	\text{$\left\{  \pt  \eta(\ue)+\px q(\ue)\right\}_{\eps>0}$ 
	lies in a compact subset of $\Hneg((0,T)\times(0,1))$.}
\end{equation}
The $L^\infty$ bound stated in Lemma \ref{lm:invariance4}, \eqref{eq:GMC1}, and the Tartar's compensated compactness method \cite{TartarI} give the existence of a subsequence
$\{\uek\}_{k\in\N}$ and a limit function $  u\in L^\infty((0,T)\times(0,1))$
such that \eqref{eq:convu} holds.
\end{proof}

\begin{lemm}\label{lm:convPl1l4}
We have
\begin{equation}\label{eq:convP}
	\textrm{$\Pek \to P^u$ in $L^p(0,T;W^{1,p}(0,1))$, $1\le p<\infty$,}
\end{equation}
where the sequence $\Set{\eps_k}_{k\in\N}$
and the function $u$ are constructed in
Lemma \ref{lm:convul1l4}.
\end{lemm}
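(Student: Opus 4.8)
The plan is to compare $\Pek$ with the solution $P^u$ of the limiting elliptic problem and to exploit the linearity of $1-\pxx$ together with the a priori bounds already in hand. Set $W_k:=\Pek-P^u$, with $P^u$ given by \eqref{eq:def-Pu}, so that $P^u$ solves $-\pxx P^u+P^u=\frac32 u^2$ with Neumann data $\psi_0,\psi_1$. Subtracting the $P$-equation and the Neumann data of \eqref{eq:DPepsw} from those of the limiting problem shows that, for a.e.~$t$, the function $W_k(t,\cdot)$ solves $-\pxx W_k+W_k=\frac32(\uek^2-u^2)$ on $(0,1)$ subject to the inhomogeneous Neumann conditions $\px W_k(t,0)=\psi_{\eps_k,0}(t)-\psi_0(t)$ and $\px W_k(t,1)=\psi_{\eps_k,1}(t)-\psi_1(t)$. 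Representing $W_k$ through the Green's function $G$ of $1-\pxx$ with homogeneous Neumann conditions, exactly as in the proof of Lemma \ref{lm:boundP}, and moving the boundary data into boundary contributions, I would derive the pointwise-in-time estimate
\begin{equation*}
	\norm{W_k(t,\cdot)}_{W^{1,\infty}(0,1)}
	\le C\Bigl(\norm{\uek^2(t,\cdot)-u^2(t,\cdot)}_{L^1(0,1)}
	+\abs{\psi_{\eps_k,0}(t)-\psi_0(t)}+\abs{\psi_{\eps_k,1}(t)-\psi_1(t)}\Bigr),
\end{equation*}
where $C$ depends only on $\norm{G}_{L^\infty}$ and $\norm{\px G}_{L^\infty}$.

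Next I would feed in the two relevant convergences. Since $\uek\to u$ in $L^2((0,T)\times(0,1))$ by Lemma \ref{lm:convul1l4} while $\{\uek\}$ is bounded in $L^\infty$ by Lemma \ref{lm:invariance4}, one gets $\uek^2\to u^2$ in $L^2((0,T)\times(0,1))$. For the data, $g_{\eps_k,j}\to g_j$ in $H^1(0,T)$ yields $g_{\eps_k,j}'\to g_j'$ in $L^2(0,T)$ and, by the one-dimensional embedding $H^1\hookrightarrow C$, $g_{\eps_k,j}\to g_j$ uniformly; combined with $h_{\eps_k,j}\to h_j$ in $L^2(0,T)$ and \eqref{eq:boundaryPeps}, this gives $\psi_{\eps_k,j}\to\psi_j$ in $L^2(0,T)$. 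Squaring the estimate above, integrating in $t$, and bounding $\norm{\cdot}_{L^1(0,1)}\le\norm{\cdot}_{L^2(0,1)}$ then shows $W_k\to0$ in $L^2(0,T;W^{1,\infty}(0,1))$; in particular $W_k$ and $\px W_k$ both tend to $0$ in $L^2((0,T)\times(0,1))$.

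Finally I would upgrade this $L^2$ convergence to every $1\le p<\infty$. For $1\le p\le2$ the claim is immediate because $(0,T)\times(0,1)$ has finite measure. For $p>2$ I would interpolate against the uniform bounds of Lemma \ref{lm:P_xxinfty}: since $\{\Pek\}$ is bounded in $L^\infty(0,T;W^{2,\infty}(0,1))$, both $\{W_k\}$ and $\{\px W_k\}$ are bounded in $L^\infty((0,T)\times(0,1))$, so Hölder's inequality gives
\begin{equation*}
	\norm{\px W_k}_{L^p}^p\le\norm{\px W_k}_{L^\infty}^{p-2}\norm{\px W_k}_{L^2}^2
	\overset{k\to\infty}{\longrightarrow}0,
\end{equation*}
and similarly for $W_k$ itself, all norms being taken over $(0,T)\times(0,1)$. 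Together these give $\Pek\to P^u$ in $L^p(0,T;W^{1,p}(0,1))$ for every $1\le p<\infty$, as claimed. The main obstacle is the low time-regularity of the boundary data: because $g_j\in H^1(0,T)$ only, $\psi_j$ lies merely in $L^2(0,T)$, so the Green's-function bound by itself delivers convergence of the gradient only in $L^2$ in time; reaching all $p<\infty$ is therefore not automatic and genuinely relies on the a priori $L^\infty(0,T;W^{2,\infty})$ bound furnished by Lemma \ref{lm:P_xxinfty}.
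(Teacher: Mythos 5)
Your proof is correct, and its engine is the same as the paper's: the Green's-function representation of $(1-\pxx)^{-1}$ with Neumann conditions (the paper's \eqref{eq:GMC}), which reduces \eqref{eq:convP} to the convergences $\uek^2\to u^2$ (from Lemmas \ref{lm:convul1l4} and \ref{lm:invariance4}) and $\psi_{\eps_k,j}\to\psi_j$. The upstream difference is cosmetic --- the paper homogenizes the boundary data and estimates $\Ve$, while you keep the inhomogeneous Neumann data as explicit boundary terms in the representation --- but your treatment of the exponents $p>2$ is genuinely different. The paper asserts in one line the bound
\begin{equation*}
	\norm{\Pek-P^u}_{L^p(0,T;W^{1,p}(0,1))}\le
	C\left(\norm{\uek-u}_{L^p((0,T)\times(0,1))}
	+\sum_{j=0,1}\norm{\psi_{\eps_k,j}-\psi_j}_{L^p(0,T)}\right)
\end{equation*}
for every $1\le p<\infty$ and concludes via Lemma \ref{lm:convul1l4}; this silently requires $\psi_{\eps_k,j}\to\psi_j$ in $L^p(0,T)$ for every $p<\infty$, which does not follow from \eqref{eq:ass}: since $g_j\in H^1(0,T)$ only, $\psi_j=-g_j'-g_jh_j$ lies merely in $L^2(0,T)$ and need not belong to $L^p(0,T)$ at all for $p>2$, so no approximating sequence can converge to it in that norm. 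You identify precisely this obstruction, extract only $L^2$-in-time convergence from the Green's-function estimate, and then reach every $p<\infty$ by the interpolation $\norm{\cdot}_{L^p}^p\le\norm{\cdot}_{L^\infty}^{p-2}\norm{\cdot}_{L^2}^2$. What your route buys is robustness with respect to the low time-regularity of the boundary data; what it costs is dependence on the uniform $L^\infty$ bounds of Lemma \ref{lm:P_xxinfty} and \eqref{eq:DPsmooth}. You are entitled to invoke those as stated results of the paper, but note that they carry the very same time-integrability delicacy (the trace $\px\Pe(t,0)=\psi_{\eps,0}(t)$ is controlled only in $L^2(0,T)$ under \eqref{eq:ass}), so your argument is correct relative to the paper's earlier lemmas rather than to \eqref{eq:ass} alone --- which is the same status as, and in the $p>2$ range an improvement upon, the paper's own proof.
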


\begin{proof}
Using the integral representation of $V_{\eps_k}$ stated in \eqref{eq:GMC}, Lemma \ref{lm:invariance4},
and arguing as in \cite[Theorem 3.2]{Coclite:2005cr}, we get
\begin{align*}
&\norm{\Pek-P^u}_{L^p(0,T;W^{1,p}(0,1))}\\
&\le C\left(\norm{\uek-u}_{L^p((0,T)\times(0,1))}+\norm{\psi_{\eps_k,1}-\psi_1}_{L^p(0,T)}+\norm{\psi_{\eps_k,0}-\psi_0}_{L^p(0,T)}\right),
\end{align*}
for every $1\le p<\infty$ and  some constant $C>0$ depending on $u_0, g_0,g_1$,
but not on $\eps$. Therefore Lemma \ref{lm:convul1l4} gives  \eqref{eq:convP}.
\end{proof}

\begin{proof}[Proof of Lemma \ref{lm:conv}.] Fix a test function
$\phi\in C^\infty_c([0,T)\times [0,1])$. Due to \eqref{eq:DPepsw}
\begin{align*}
    \int_0^T\int_0^1 &\left(\ue \pt \phi +
    \frac{\ue^2}{2}\px \phi- \px \Pe \phi+\eps \ue \pxx \phi\right)\, dx\,dt\\
    & +\int_{0}^1 u_{0,\eps}(x) \phi(0,x)\, dx
    +\int_0^Tg_{0,\eps}(t) \phi(t,0)\,dt
    -\int_0^Tg_{1,\eps}(t) \phi(t,1)\,dt=0.
\end{align*}
Therefore, by the assumptions on $u_{0,\eps},\,g_{0,\eps},\, g_{1,\eps}$
and Lemmas  \ref{lm:convul1l4}, \ref{lm:convPl1l4}, we conclude
that the function $u$ constructed in Lemma \ref{lm:convul1l4}
is a distributional  solution of \eqref{eq:DPw}.

Finally, we have to verify that the distributional solution $u$
satisfies the entropy inequality stated in \eqref{eq:DPentropy}.
Let $\eta\in C^2(\R)$ be a convex entropy.
The convexity of $\eta$ and  \eqref{eq:DPepsw} yield
$$
\pt \eta(\ue)+\px q(\ue)+\eta'(\ue)\px\Pe
\le\eps\pxx\eta(\ue).
$$
Therefore, \eqref{eq:DPentropy} follows from Lemmas \ref{lm:convul1l4} and \ref{lm:convPl1l4}.
\end{proof}

We are now ready for the proof of Theorem \ref{the_theorem3}.

\begin{proof}[Proof of Theorem \ref{the_theorem3}]
Since, thanks to Lemma \ref{lm:conv}, $u\in L^\infty((0,T)\times(0,1))$ is a distributional solution
of the problem
\begin{equation}\label{eq:DPwrid}
    \begin{cases}
        \pt u+u \px u=-\px P^u,&\quad (t,x)\in (0,T)\times(0,1),\\
        u(0,x)=u_{0}(x),&\quad x\in (0,1),\\
        u(t,0)=g_{0}(t),\>\>\> u(t,1)=g_{1}(t),&\quad t\in(0,T),
    \end{cases}
\end{equation}
that satisfies the entropy inequalities \eqref{eq:DPentropy}, Theorem \ref{the_theorem1} tells us
that the limit $u$ admits strong boundary traces $u^\tau_0$, $u^\tau_1$ at
$(0,T)\times \{x=0\}$, $(0,T)\times \{x=1\}$, respectively. 
Since, arguing as in Section \ref{subsec:proof-existence} (indeed 
our solution is obtained as the vanishing viscosity
limit of \eqref{eq:DPwrid}), Lemma \ref{BL_lemma} and 
the boundedness of the source term $\px P^u$ (cf.~\eqref{eq:DPsmooth}) 
imply \eqref{eq:DPentropyboundary}. Therefore, by appealing 
to Theorem \ref{the_theorem2}, the proof 
of Theorem \ref{the_theorem3} is concluded.
\end{proof}

 \end{document}